\theoremstyle{plain}
  \newtheorem{thm}{Theorem}[section]
  \newtheorem{theorem}[thm]{Theorem}
  \newtheorem{corollary}[thm]{Corollary}
    \newtheorem{proposition}[thm]{Proposition}
  \newtheorem{lemma}[thm]{Lemma}
   \newtheorem{conjecture}[thm]{Conjecture}
\theoremstyle{definition}
  \newtheorem{definition}[thm]{Definition}
  \newtheorem{example}[thm]{Example}
  \newtheorem{remark}[thm]{Remark}
  \newtheorem{convention}[thm]{Convention}
\newcommand{\bZ}{\mathbb{Z}} 
\newcommand{\bQ}{\mathbb{Q}}
\newcommand{\bC}{\mathbb{C}}
\newcommand{\cC}{\mathcal{C}}
\newcommand{\cA}{\mathcal{A}}
\newcommand{\cK}{\mathcal{K}}
\newcommand{\cH}{\mathcal{H}}
\newcommand{\cG}{\mathcal{G}}
\newcommand{\PP}{\mathcal{P}}
\newcommand{\alg}{\textrm{alg}}
\newcommand{\sgn}{\textrm{sgn}}
\newcommand{\dashmod}{\text{-mod}}
\newcommand{\id}{\mathrm{id}}
\newcommand{\rM}{\mathrm{M}}
\newcommand{\ch}{\mathrm{ch}}
\newcommand{\gen}{\mathrm{gen}}
\newcommand{\pres}{\mathrm{pres}}
\newcommand{\ab}{\mathrm{ab}}
\newcommand\Com{\mathcal{C}om}
\newcommand\calP{\mathcal{P}}
\newcommand\calC{\mathcal{C}}
\newcommand\calO{\mathcal{O}}
\newcommand{\exterior}[1]{\mathop{\mathpalette\exterior@{#1}}}
\newcommand{\exterior@}[2]{%
  \raisebox{\depth}{%
  \fontsize{\sf@size}{0}%
  \m@th
  $\ifx#1\displaystyle\textstyle\else#1\fi\bigwedge$}%
  ^{\mspace{-2mu}#2}%
  \kern-\scriptspace
}
\newcommand{\sectionauthor}[1]{%
  {\linespread{1.1}\scshape#1%
  \par\nobreak}
  \@afterheading%
}
\definecolor{dOrange}{rgb}{0.9,0.4,0}
\newcommand{\uncertain}[1]{\textcolor{dOrange}{#1}}
\newcommand{\nocontentsline}[3]{}
\newcommand{\tocless}[2]{\bgroup\let\addcontentsline=\nocontentsline#1{#2}\egroup}
\DeclareMathOperator{\Aut}{Aut}
\DeclareMathOperator{\GL}{GL}
\DeclareMathOperator{\IA}{IA}
\DeclareMathOperator{\FB}{\mathsf{FB}}
\DeclareMathOperator{\Rep}{Rep}
\DeclareMathOperator{\Hom}{Hom}
\DeclareMathOperator{\wBr}{\mathsf{wBr}}
\DeclareMathOperator{\dwBr}{\mathsf{dwBr}}
\DeclareMathOperator{\tensoredover}{\hspace{0.2mm}\astrosun\hspace{0.2mm} }
\DeclareMathOperator{\GGL}{\mathbf{GL}}
\DeclareMathOperator{\SL}{SL}
\DeclareMathOperator{\Gr}{Gr}
\DeclareMathOperator{\coker}{coker}
\title{The walled Brauer category and stable cohomology of $\IA_n$}
\author{Erik Lindell, \\ With an appendix by Mai Katada}
\date{}
\begin{document}

\maketitle

\begin{abstract}
    The $\IA$-automorphism group is the group of automorphisms of the free group $F_n$ that act trivially on the abelianization $F_n^{\ab}$. This group is in many ways analoguous to Torelli groups of surfaces and their higher dimensional analogues. In recent work, the stable rational cohomology of such groups was studied by Kupers and Randal-Williams, using the machinery of so-called Brauer categories. In this paper, we adapt their methods to study the stable rational cohomology of the $\IA$-automorphism group. We obtain a conjectural description of the algebraic part of the stable rational cohomology and prove that it holds up to degree $Q+1$, given the assumption that the stable cohomology groups are stably finite dimensional in degrees up to $Q$. In particular, this allows us to compute the algebraic part of the stable cohomology in degree 2, which we show agrees with the part generated by the first cohomology group via the cup product map and which has previously been computed by Pettet.

    In the appendix, written by Mai Katada, it is shown how the results of the paper can be applied to compute the stable Albanese (co)homology of the $\IA$-automorphism group.
\end{abstract}

\setcounter{tocdepth}{1}

\tableofcontents

\section{Introduction}
\noindent Let $F_n:=\langle x_1,x_2,\ldots,x_n\rangle$ denote the free group on $n$ generators and $\Aut(F_n)$ its automorphism group. The action on $F_n^{\ab}\cong\bZ^n$ defines a surjective homomorphism $\Aut(F_n)\to\GL_n(\bZ)$ and we define $\IA_n$, the $\IA$-automorphism group of $F_n$, as the kernel of this action. The subject of this paper is the rational cohomology of this group. Similarly to the groups $\Aut(F_n)$, the $\IA$-automorphism groups fit into a sequence
\begin{equation}\label{eq:stabilization}
    \IA_1\to\IA_2\to\cdots\to\IA_n\to\IA_{n+1}\to\cdots
\end{equation}
given by extending an automorphism by acting trivially on the new generator. However, unlike for the corresponding sequence of the groups $\Aut(F_n)$, the induced sequence in (co)homology of $\IA_n$ does not stabilize. On the other hand, by definition $\IA_n$ sits in a short exact sequence
\begin{equation}\label{eq:IA-ses}
    1\to\IA_n\to\Aut(F_n)\to\GL_n(\bZ)\to1,
\end{equation}
making $H^*(\IA_n,\bQ)$ into a $\GL_n(\bZ)$-representation. When taking this extra structure into account, the cohomology groups still turn out to be the most tractable in a range where $n$ is sufficiently large compared to the cohomological degree. For example, it was proven by Kawazumi \cite{KawazumiMagnusExpansions}, and independently by Cohen-Pakianathan and Farb (both unpublished), that we have
$$H^1(\IA_n,\bQ)\cong \exterior{2} H^\vee(n)\otimes H(n),$$
where $H(n):=H_1(F_n,\bQ)$ and $H^\vee(n):=H^1(F_n,\bQ)$. Based on this, it was conjectured by Church and Farb \cite{ChurchFarbRepStability} that the sequence induced in (co)homology by (\ref{eq:stabilization}) satisfies \textit{representation stability}. Since we will not consider representation stability in any detail in this paper, we refer to \cite{ChurchFarbRepStability} for the precise definition, and only say that in this situation it essentially says that for $n$ sufficiently large compared to the degree $*$, $H^*(\IA_n,\bQ)$ is the restriction of a $\GL_n(\bQ)$-representation (i.e.\ an \textit{algebraic} representation of $\GL_n(\bZ)$) and that its decomposition into irreducible $\GL_n(\bQ)$-representations is independent of $n$.

The stable rational cohomology of $\IA_n$ has also recently been studied by Katada \cite{Katada2022stable} and Habiro-Katada \cite{HabiroKatada2023stable}. In \cite{Katada2022stable}, Katada studied that so called \textit{Albanese cohomology} of $\IA_n$, which is defined as the image of the cup product map $\exterior{*}H^1(\IA_n,\bQ)\to H^*(\IA_n,\bQ)$ and denoted $H_A^*(\IA_n,\bQ)$. Based on her calculations, she conjectured a complete description of the stable Albanese cohomology, which can be slightly reformulated as follows:

\begin{conjecture}{\cite[Conjecture 8.2]{Katada2022stable}}\label{conjecture:Katada}
    For $n\gg *$, we have
    $$H^*_A(\IA_n,\bQ)\cong \exterior{*}(\exterior{2}H^\vee(n)\otimes H(n))/\mathsf{IH},$$
where $\mathsf{IH}$ is the ideal generated by elements of the form
\begin{align}\label{eq:relationmainthm}
        \sum_{i=1}^n (f_1\wedge f_2\otimes e_i)\wedge(e_i^\#\wedge f_3\otimes v)-(f_3\wedge f_1\otimes e_i)\wedge(e_i^\#\wedge f_2\otimes v)\in \exterior{2}(\exterior{2}H^\vee(n)\otimes H(n)),
    \end{align}
where $\{e_1,\ldots,e_n\}$ is the standard basis of $H(n)$, defined by $e_i:=[x_i]$, and $\{e_1^\#,\ldots,e_1^\#\}$ is the dual basis. 
\end{conjecture}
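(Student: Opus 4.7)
The plan is to combine the main theorem of this paper—a description of the algebraic part of $H^*(\IA_n,\bQ)$ up to degree $Q+1$ in terms of the walled Brauer category, conditional on stable finite-dimensionality up to $Q$—with a direct analysis of the subalgebra generated by $H^1$. Since $H^1(\IA_n,\bQ) \cong \exterior{2}H^\vee(n) \otimes H(n)$ in the stable range, the cup product yields a natural surjection
\[
\varphi \colon \exterior{*}\bigl(\exterior{2}H^\vee(n) \otimes H(n)\bigr) \twoheadrightarrow H^*_A(\IA_n,\bQ).
\]
The first task is to verify that the generators (\ref{eq:relationmainthm}) of $\mathsf{IH}$ lie in $\ker(\varphi)$. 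This is a check in degree $2$, where the algebraic part of $H^*(\IA_n,\bQ)$ is known (by Pettet's computation, matched with this paper's main theorem in degree $2$ as mentioned in the abstract), so one can confirm directly that these particular elements vanish under $\varphi$. This produces the desired surjection from the conjectured quotient onto $H^*_A(\IA_n,\bQ)$.

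For injectivity, I would use the main theorem to realize the algebraic stable cohomology as specific representations of the walled Brauer category, and then identify the Albanese part as the subalgebra generated in degree $1$. The walled Brauer category encodes all $\GL_n(\bQ)$-equivariant maps between mixed tensor products of $H(n)$ and $H^\vee(n)$; in particular, every relation among cup products must factor through such a morphism. The sum $\sum_i e_i \otimes e_i^\#$ appearing in (\ref{eq:relationmainthm}) is naturally identified with the canonical contraction (``trace'') morphism of the walled Brauer category, and the antisymmetrization in $f_1, f_2, f_3$ reflects the wedge structure. This gives the structural origin of the relation and, more importantly, suggests that all relations among cup products arise from analogous walled Brauer morphisms combined with the ambient (anti)symmetrization of the exterior algebra.

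The concrete finishing step is then a Schur-functor comparison in the stable range: show that, degree by degree, $\exterior{*}(\exterior{2}H^\vee(n) \otimes H(n))/\mathsf{IH}$ agrees as a $\GL_n(\bQ)$-representation with the subalgebra-generated-in-degree-$1$ of the walled Brauer description of the algebraic cohomology. The main obstacle is precisely this: showing that $\mathsf{IH}$ captures \emph{all} relations and that no further syzygies sneak in at higher degrees. A priori, new higher-degree relations could arise that are not algebraic consequences of (\ref{eq:relationmainthm}), and ruling them out requires exploiting the efficient presentation provided by the walled Brauer category—this is the step where I expect the machinery developed in the body of the paper (and the computations in the appendix) to do the heaviest lifting.
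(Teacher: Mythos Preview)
Your surjection step is correct and matches the paper: the $\mathsf{IH}$ generators do lie in $\ker(\cup)$ in degree~$2$, so one obtains $R_{\mathrm{pres}}:=\exterior{*}(\exterior{2}H^\vee\otimes H)/\mathsf{IH}\twoheadrightarrow H^*_A(\IA_n,\bQ)$. The gap is in your injectivity argument. You propose to invoke the main theorem to identify $H^*(\IA_n,\bQ)^{\alg}$ with $R_{\mathrm{pres}}\otimes\bQ[y_4,y_8,\ldots]$ and then read off the Albanese part as the subalgebra generated in degree~$1$. But Theorems~A and~B are \emph{conditional}: they require Borel vanishing for $H^*(\IA_n,\bQ)$ in degrees $<Q$, which is not known beyond $Q=2$. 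So your route yields at best a conditional statement, whereas the paper (in the appendix) proves the conjecture unconditionally.

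The missing idea is that the lower bound on $H^*_A$ does not come from the walled Brauer description of the algebraic cohomology at all; it comes from Katada's earlier, unconditional result that $W_*(n)^\vee\subseteq H^*_A(\IA_n,\bQ)$ stably (\cite[Theorem~6.1]{Katada2022stable}). The paper then runs a clean dimension sandwich: the surjection $R_{\mathrm{pres}}\twoheadrightarrow H^*_A$ gives $\dim H^*_A\le\dim R_{\mathrm{pres}}$, while Katada's inclusion gives $\dim H^*_A\ge\dim W_*^\vee$. The two bounds meet because of the purely algebraic, unconditional identifications $R_{\mathrm{pres}}\cong R$ (Proposition~\ref{prop:ringstructure}) and $R\cong W_*(n)^\vee$ (Lemma~\ref{lemmaW*}), neither of which requires any assumption on $H^*(\IA_n,\bQ)$. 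Your proposal correctly anticipates that the coend computation $R_{\mathrm{pres}}\cong R$ does heavy lifting, but it replaces the crucial external input $W_*^\vee\subseteq H^*_A$ with an appeal to the conditional theorem, and that is where the argument would fail.
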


\begin{remark}
    We use the notation $\mathsf{IH}$ for this ideal as the relation (\ref{eq:relationmainthm}) corresponds to a ``directed IH-relation'' among $(2,1)$-valent directed graphs, as explained in Section \ref{sec:ring-structure} below.
\end{remark}

In the same paper, Katada introduced a graded $\GL_n(\bZ)$-representation $W_*(n)$ (see Appendix \ref{Appendix} for a definition of this representation) and proved that for $n\gg *$, $W_*(n)^\vee\subseteq H_A^*(\IA_n,\bQ)$ \cite[Theorem 6.1]{Katada2022stable}. Based on low degree cases, she conjectured that in fact 
$$W_*(n)^\vee\cong H^*_A(\IA_n,\bQ)\cong\exterior{*}(\exterior{2}H^\vee(n)\otimes H(n))/\mathsf{IH} $$
for $n\gg *$ (\cite[Conjecture 1.7]{Katada2022stable}).

In \cite{HabiroKatada2023stable}, the authors study the entire stable rational cohomology of $\IA_n$ and conjecture a complete description, based on Katada's conjecture and a weaker version of the conjecture by Church and Farb that $H^*(\IA_n,\bQ)$ satisfies representation stably, namely that it is stably a finite dimensional and algebraic representation of $\GL_n(\bZ)$. 

One of the main inputs for their calculation is the stable cohomology of $\Aut(F_n)$ with the coefficients $H(n)^{\otimes p}\otimes H^\vee(n)^{\otimes q}$, which was recently computed by the author in \cite{Lindell2022stable}. By studying the Hochschild-Serre spectral sequence associated to the short exact sequence (\ref{eq:IA-ses}), with these representations as coefficients, they proved the following:

\begin{theorem}{cf. \cite[Conjecture 7.5, Theorem 7.6]{HabiroKatada2023stable}}
    Assuming the conjecture that stably $H_A^*(\IA_n,\bQ)\cong W_*(n)^\vee$ and that stably $H^*(\IA_n,\bQ)$ is an algebraic $\GL_n(\bZ)$-representation in each degree, it follows that in a range $n\gg *$ we have a $\GL_n(\bZ)$-equivariant ring isomorphism
    $$H^*(\IA_n,\bQ)\cong \exterior{*}(\exterior{2}H^\vee(n)\otimes H(n))/\mathsf{IH}\otimes \bQ[y_4,y_8,\ldots],$$
    where $\bQ[y_4,y_8,\ldots]$ is a trivial graded $\GL_n(\bZ)$-representation with $|y_{4i}|=4i$.
\end{theorem}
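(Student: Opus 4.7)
The plan is to apply the Hochschild--Serre spectral sequence associated to the short exact sequence (\ref{eq:IA-ses}), with trivial rational coefficients:
$$E_2^{p,q} = H^p(\GL_n(\bZ), H^q(\IA_n, \bQ)) \Rightarrow H^{p+q}(\Aut(F_n), \bQ).$$
By Galatius's theorem, the abutment is stably trivial in positive total degree. First I would use the algebraicity assumption combined with Borel's vanishing theorem for algebraic $\GL_n(\bZ)$-representations---which gives $H^p(\GL_n(\bZ), V) \cong H^p(\GL_n(\bZ), \bQ) \otimes V^{\GL_n(\bQ)}$ stably for algebraic $V$---to rewrite the $E_2$-page as
$$E_2^{p,q} \cong H^p(\GL_n(\bZ), \bQ) \otimes (H^q(\IA_n, \bQ))^{\GL_n(\bQ)}.$$
Together with Borel's computation that stably $H^*(\GL_n(\bZ), \bQ)$ is an exterior algebra on generators $z_{4k+1}$ in degrees $4k+1$, this turns the spectral sequence into a tractable object.

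Next I would observe that the subring $K^* := \exterior{*}(\exterior{2}H^\vee(n) \otimes H(n))/\mathsf{IH}$, which under the first assumption coincides with the Albanese cohomology $H^*_A(\IA_n, \bQ)$, carries no $\GL_n(\bQ)$-invariants in positive degrees, since all its irreducible constituents come from nontrivial Schur components of tensor powers of $H(n)$ and $H^\vee(n)$. Consequently, the $\GL_n$-invariants of $H^*(\IA_n, \bQ)$ live entirely in the $\GL_n$-complement of $K^*$. The requirement that the spectral sequence collapse to $\bQ$ in total degree $0$ then forces, by the standard pattern in which a polynomial algebra on even-degree generators transgresses to an exterior algebra on odd-degree generators, the identification $(H^*(\IA_n, \bQ))^{\GL_n(\bQ)} \cong \bQ[y_4, y_8, \ldots]$, with each $y_{4i}$ transgressing to $z_{4i+1}$ via $d_{4i+1}$.

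The main obstacle is upgrading the description of the invariants to a description of the entire $\GL_n$-equivariant ring $H^*(\IA_n, \bQ)$. To overcome this, I would run the Hochschild--Serre spectral sequence again, but now with twisted coefficients $H(n)^{\otimes p} \otimes H^\vee(n)^{\otimes q}$; the abutment $H^*(\Aut(F_n), H(n)^{\otimes p} \otimes H^\vee(n)^{\otimes q})$ is known stably by \cite{Lindell2022stable}. The invariants $(H^*(\IA_n, \bQ) \otimes H(n)^{\otimes p} \otimes H^\vee(n)^{\otimes q})^{\GL_n(\bQ)}$ detect, via Schur--Weyl duality, the multiplicities of the various irreducible $\GL_n(\bQ)$-summands appearing in each degree of $H^*(\IA_n, \bQ)$. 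Comparing the resulting $E_\infty$-pages against the computation of \cite{Lindell2022stable} lets one read off the full isotypic decomposition of $H^*(\IA_n, \bQ)$ and check that it matches that of the proposed ring $K^* \otimes \bQ[y_4, y_8, \ldots]$. The natural multiplication map from the tensor product, built from the inclusions of $K^*$ (as the Albanese subring) and of $\bQ[y_4, y_8, \ldots]$ (as the invariants), is then an isomorphism by this dimension count, giving the claimed ring structure.
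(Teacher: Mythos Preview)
Your outline is correct and matches the method the paper attributes to Habiro--Katada: the Hochschild--Serre spectral sequence for \eqref{eq:IA-ses} with trivial and then with twisted coefficients $H(n)^{\otimes p}\otimes H^\vee(n)^{\otimes q}$, combined with Borel vanishing, Galatius's theorem, and the computation of \cite{Lindell2022stable}. Note that the paper does not give its own proof of this particular statement but cites it from \cite{HabiroKatada2023stable}; the paper's own contribution (Theorems~\ref{theoremA} and~\ref{theoremB}) repackages the same spectral-sequence inputs categorically via the walled Brauer category, which in particular lets one drop the Albanese-cohomology hypothesis.
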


\begin{remark}
    We recall that the stable rational cohomology of $\GL_n(\bZ)$ was computed by Borel in \cite{Borel1} and is isomorphic to the exterior algebra $\exterior{*}\{x_5,x_9,\ldots\}$, where $|x_{4i+1}|=4i+1$. The trivial graded representation $\bQ[y_4,y_8,\ldots]$ in the theorem above appears when studying so-called ``anti-transgression maps'' in the Hochschild-Serre spectral sequence associated to the short exact sequence (\ref{eq:IA-ses}), which gives rise to a degree $-1$ map $\exterior{*}\{x_5,x_9,\ldots\}\to H^*(\IA_n,\bQ)^{\GL_n(\bZ)}$. We explain this in more detail in Section \ref{subsec:invariantstrivialcoeffs} (cf. \cite[Section 6]{HabiroKatada2023stable}).
\end{remark}

The idea of studying the stable cohomology of $\IA_n$ using this spectral sequence is not unique to the work of Habiro and Katada. For example, in \cite{KupersRandal-Williams} and \cite{RW-TorelliII}, Kupers-Randal-Williams and Randal-Williams used this kind of approach to study the stable cohomology of Torelli groups of surfaces and their higher dimensional analogues. Furthermore, by utilizing a category theoretical approach using so-called \textit{Brauer categories}, Kupers and Randal-Williams were able to  effectively study the structure of these cohomology groups simoultaneously as representations of arithmetic groups as well as rings. The goal of this paper is to apply the methods of \cite{KupersRandal-Williams} to the study of $\IA_n$. This will allow us to retrieve the results of Habiro and Katada in a somewhat different way and also to push them a bit further.

To state our main result, we must first recall a little bit more background. Although not phrased directly like this in their paper, the crucial part in the assumption of Habiro and Katada that $H^*(\IA_n,\bQ)$ is stably finite dimensional and algebraic is the implication that $H^*(\IA_n,\bQ)$ satisfies \textit{Borel vanishing}:

\begin{definition}
    A $\GL_n(\bZ)$-representation $V$ is said to satisfy \textit{Borel vanishing} if in some range $n\gg*$, the cup product map
    $$H^*(\GL_n(\bZ),\bQ)\otimes V^{\GL_n(\bZ)}\to H^*(\GL_n(\bZ),V)$$    is an isomorphism.
\end{definition}

\begin{remark}
    Most ``reasonable'' representations of $\GL_n(\bZ)$ satisfy this. For example, Borel proved in \cite{Borel1} that any finite dimensional algebraic representation does and in Section \ref{sec:representationtheory} we show that this actually implies Borel vanishing for any finite dimensional $\GL_n(\bZ)$-representation. 
\end{remark}

This means that we do not need to make any assumptions about the algebraicity of the representation $H^*(\IA_n,\bQ)$. However, the method we employ is still only able to detect the ``algebraic part'' of the cohomology. For $V$ a $\GL_n(\bZ)$-representation we define its \textit{algebraic part} $V^{\alg}$ as
$$V^{\alg}:=\bigcup_{\substack{W\subseteq V\\ W\text{ algebraic}}}W\subseteq V.$$
With these definitions, our main theorem can be stated as follows:

\begin{restatable}{thmx}{theoremA}\label{theoremA}
There is a $\GL_n(\bZ)$-equivariant graded ring homomorphism
    $$\exterior{*}\left(\exterior{2}H^\vee(n)\otimes H(n)\right)\otimes\bQ[y_4,y_8,\ldots]\to H^*(\IA_n,\bQ)^{\alg}$$
    where $\bQ[y_4,y_8,\ldots]$ is a trivial graded $\GL_n(\bZ)$-representation. If there is a $Q\ge 0$ such that $H^*(\IA_n,\bQ)$ satisfies Borel vanishing in each degree $*<Q$, then in a range $n\gg *$, this ring homomorphism is surjective in degrees $*\le Q$ and the kernel is the ideal $\mathsf{IH}$.  
\end{restatable}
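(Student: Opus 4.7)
The plan is to adapt the walled Brauer category machinery of Kupers--Randal-Williams to the Hochschild--Serre spectral sequence of $1\to\IA_n\to\Aut(F_n)\to\GL_n(\bZ)\to 1$. I would construct the ring map in two parts. On $\exterior{*}(\exterior{2}H^\vee(n)\otimes H(n))$, I would use the Kawazumi--Cohen-Pakianathan--Farb identification of $H^1(\IA_n,\bQ)$ and extend by cup product; the image lies in the Albanese subring and is therefore algebraic. For the polynomial factor, since the stable cohomology of $\Aut(F_n)$ vanishes in positive degrees (Galatius), every Borel class $x_{4i+1}\in H^{4i+1}(\GL_n(\bZ),\bQ)$ must transgress in the spectral sequence; anti-transgression (Section \ref{subsec:invariantstrivialcoeffs}) then produces an invariant, hence algebraic, class $y_{4i}\in H^{4i}(\IA_n,\bQ)^{\GL_n(\bZ)}$. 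Tensoring these and verifying graded-commutative multiplicativity gives the required ring homomorphism.

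For surjectivity, assume Borel vanishing in all degrees $<Q$. The crucial consequence is that the $E_2$-page of the Hochschild--Serre spectral sequence reduces to $H^p(\GL_n(\bZ),\bQ)\otimes H^q(\IA_n,\bQ)^{\GL_n(\bZ)}$ on the algebraic part for $q<Q$, so the spectral sequence is governed by its invariants. By the computation of \cite{Lindell2022stable}, the stable cohomology of $\Aut(F_n)$ with coefficients $H(n)^{\otimes p}\otimes H^\vee(n)^{\otimes q}$ is fully known, and the walled Brauer category $\wBr$ organizes the resulting upper bound on $H^q(\IA_n,\bQ)^{\alg}$ as a $\GL_n(\bZ)$-representation. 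I would then induct on $q$: comparing the upper bound to the image of our ring map forces equality in each degree $q\le Q$.

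To identify the kernel, I would first verify directly that the elements \eqref{eq:relationmainthm} vanish in $H^*_A(\IA_n,\bQ)$ by translating them into the directed IH-relations of Section \ref{sec:ring-structure} and computing the associated cup products. That $\mathsf{IH}$ is the \emph{entire} kernel, and that no further relations mix the $y_{4i}$ with the exterior-algebra factor, follows from a Hilbert-series comparison: the Brauer-categorical upper bound matches the stable dimension of $\exterior{*}(\exterior{2}H^\vee(n)\otimes H(n))/\mathsf{IH}\otimes\bQ[y_4,y_8,\ldots]$. The main obstacle is this matching step: ensuring that the walled Brauer category calculation produces no algebraic classes beyond those in the image of our ring map, and that the ring and $\GL_n(\bZ)$-module structures remain intact across the spectral sequence. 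Borel vanishing is precisely the hypothesis that lets this argument close, and threading it through the Hochschild--Serre machinery while tracking multiplicativity is the technical heart of the proof.
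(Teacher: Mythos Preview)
Your proposal identifies all the correct ingredients and is essentially the same strategy as the paper, but the organization differs in a way that matters for actually closing the argument. The paper separates the proof into two independent steps: first, Theorem~\ref{theoremB} establishes that the coend $R=i^*(K^\vee)\otimes^{\dwBr}(\PP'\otimes\det)$ together with $\bQ[y_4,y_8,\ldots]$ maps isomorphically onto $H^*(\IA_n,\bQ)^{\alg}$ in degrees $\le Q$, using Proposition~\ref{prop:wBr1} (the analogue of \cite[Proposition~2.16]{KupersRandal-Williams}) to convert the natural isomorphism of $\wBr_n$-modules into a statement about the algebraic part; second, Section~\ref{sec:ring-structure} shows $R_{\pres}\to R$ is an isomorphism by a purely combinatorial argument, producing graph spaces $\cG(S)$ that surject onto $[R_{\pres}\otimes K_S(n)]^G$ and matching them against $\PP(S)\otimes\det(S)$ via Proposition~\ref{prop:wBr2}.

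Your proposed ``Hilbert-series comparison'' collapses these two steps into one, and that is where the sketch is thin. To compare dimensions you need to know both the size of $H^q(\IA_n,\bQ)^{\alg}$ and the size of $\exterior{*}(\exterior{2}H^\vee\otimes H)/\mathsf{IH}$ as $\GL_n(\bZ)$-representations; the paper obtains the former from the coend description (not merely as an upper bound, but as an exact identification via Lemma~\ref{lemma:testisos}) and the latter from the graph reduction of Proposition~\ref{prop:graphreduction}. Your inductive surjectivity argument and direct verification that $\mathsf{IH}$ vanishes are fine as far as they go, but without an independent computation of the target's size they do not rule out a larger kernel or extra algebraic classes. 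The coend formalism is precisely what supplies that missing dimension count, so in practice your argument would have to reproduce Proposition~\ref{prop:wBr1} and the graph combinatorics of Section~\ref{sec:ring-structure} anyway.
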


We note that, as expected, we obtain the same conjectural description of $H^*(\IA_n,\bQ)^{\alg}$ as Habiro and Katada (since in their case they assume that the entire stable cohomology is algebraic). However, we remark that in this theorem, there is no assumption about the Albanese cohomology of $\IA_n$.

Let us also remark on the degree shift appearing in the theorem and its corollary, which is obtained by pushing the spectral sequence argument of Habiro and Katada a bit further (see Section \ref{sec:catdescriptionofcohomology} below). Since we have seen that $H^1(\IA_n,\bQ)\cong\exterior{2}H^\vee(n)\otimes H(n)$ is in fact a finite dimensional (even algebraic) representation of $\GL_n(\bZ)$, and thus satisfies Borel vanishing, we get in particular the following corollary:

\begin{corollary}
    Let $\mathsf{IH}^2$ denote the degree 2 subspace of the ideal $\mathsf{IH}$. For $n\gg 2$, we have 
    $$H^2(\IA_n,\bQ)^{\alg}=H_A^2(\IA_n,\bQ)\cong \exterior{2}\left(\exterior{2} H^\vee(n)\otimes H(n)\right)/\mathsf{IH}^2.$$
\end{corollary}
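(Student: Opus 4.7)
The plan is to apply Theorem A with $Q = 2$, which requires verifying that $H^*(\IA_n,\bQ)$ satisfies Borel vanishing in each degree $* < 2$. Degree $0$ is automatic, since $H^0(\IA_n,\bQ) \cong \bQ$ is the trivial representation. In degree $1$, the Kawazumi--Cohen-Pakianathan--Farb isomorphism identifies $H^1(\IA_n,\bQ)$ with $\exterior{2} H^\vee(n) \otimes H(n)$, which is a finite-dimensional algebraic $\GL_n(\bZ)$-representation; by Borel's theorem (as recalled in the remark following the definition of Borel vanishing), all such representations satisfy Borel vanishing, so the hypothesis of Theorem A holds with $Q = 2$.

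Theorem A then tells us that the ring map
$$\exterior{*}\bigl(\exterior{2} H^\vee(n) \otimes H(n)\bigr) \otimes \bQ[y_4, y_8, \ldots] \to H^*(\IA_n,\bQ)^{\alg}$$
is surjective in degrees $* \le 2$ with kernel equal to $\mathsf{IH}$ in that range. In total degree $2$ the polynomial factor $\bQ[y_4,y_8,\ldots]$ contributes nothing, since every generator $y_{4i}$ has degree $\ge 4$, so the degree-$2$ component of the source is exactly $\exterior{2}(\exterior{2}H^\vee(n)\otimes H(n))$ and the degree-$2$ component of the kernel is $\mathsf{IH}^2$. This produces the isomorphism
$$H^2(\IA_n,\bQ)^{\alg} \cong \exterior{2}\bigl(\exterior{2} H^\vee(n) \otimes H(n)\bigr)/\mathsf{IH}^2.$$

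To finish, one identifies this image with $H^2_A(\IA_n,\bQ)$. In degree $1$ the ring map restricts to an isomorphism $\exterior{2} H^\vee(n) \otimes H(n) \to H^1(\IA_n,\bQ)$, because $\mathsf{IH}$ is generated in degree $\ge 2$ and the map lands in $H^1(\IA_n,\bQ)^{\alg} = H^1(\IA_n,\bQ)$; since the map in Theorem A is a ring homomorphism, its restriction to $\exterior{2}(\exterior{2}H^\vee(n)\otimes H(n))$ factors through the cup product $\exterior{2} H^1(\IA_n,\bQ) \to H^2(\IA_n,\bQ)$, and so its image coincides with $H^2_A(\IA_n,\bQ)$ by definition of the Albanese cohomology.

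The main subtle point in this proof is the last identification, which relies on knowing that the degree-$1$ component of the map in Theorem A genuinely realizes the Kawazumi isomorphism. I expect this to be essentially built into the construction of the ring map in the body of the paper, so that everything else reduces to a formal specialization of Theorem A to $Q = 2$ together with the elementary observation that $H^1(\IA_n,\bQ)$ is algebraic and hence satisfies Borel vanishing.
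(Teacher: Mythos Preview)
Your proposal is correct and matches the paper's own (very terse) argument: the paper simply observes that $H^1(\IA_n,\bQ)\cong\exterior{2}H^\vee(n)\otimes H(n)$ is finite dimensional and algebraic, hence satisfies Borel vanishing, and then invokes Theorem~A with $Q=2$. Your write-up is more detailed, particularly in spelling out why the image in degree~$2$ coincides with $H^2_A(\IA_n,\bQ)$.

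One small remark on your final caveat: you do not actually need the degree-$1$ component of the map to realize the Kawazumi isomorphism specifically. All you need is that it is \emph{some} isomorphism onto $H^1(\IA_n,\bQ)$, and this already follows from Theorem~A applied in degree~$1$ (the kernel $\mathsf{IH}$ is generated in degree~$2$, and $H^1(\IA_n,\bQ)^{\alg}=H^1(\IA_n,\bQ)$ since the latter is algebraic). So the uncertainty you flag is not a genuine gap.
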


\begin{remark}
    The stable second rational Albanese cohomology of $\IA_n$ has previously been computed by Pettet \cite{Pettet} and this corollary shows that this is actually all of the stable rational algebraic cohomology of $\IA_n$ in degree 2.
\end{remark}

\subsection{Categorical version of the main theorem} Theorem \ref{theoremA} is derived from a categorical description, which we derive using the methods of \cite{KupersRandal-Williams}, applied to the so-called \textit{downward walled Brauer category}. Let us briefly summarize the definition of this category, so that we can state the categorical version of the main result in these terms. To make the notation a bit less cluttered, let us write $G:=\GL_n(\bZ)$ for the rest of this subsection.

\begin{figure}[h]
    \centering
    \includegraphics[scale=0.55]{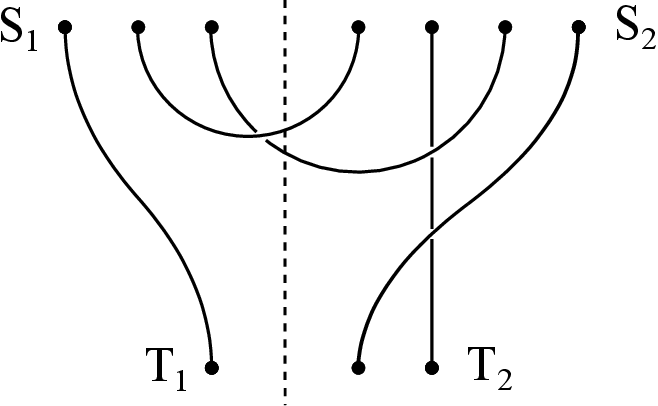}
    \caption{A morphism in $\dwBr$.}
    \label{fig:dwBr}
\end{figure}

\begin{definition}
    The downward walled Brauer category, which we denote by $\dwBr$, is the $\bQ$-linear category whose objects are pairs of finite sets $S=(S_1,S_2)$ and where the space of morphisms $\Hom_{\dwBr}(S,T)$ is spanned by diagrams such as those in Figure \ref{fig:dwBr} (where the order of crossings is not part of the data, but only included in the image to make it easier to read). Composition is given by concatenation of such diagrams. 
\end{definition}

We define a functor $K:\dwBr\to\Gr(\Rep(G))$ (the category of graded $G$-representations) by $(S_1,S_2)=H(n)^{\otimes S_1}\otimes H^\vee(n)^{\otimes S_2}$, which we think of as a graded representation concentrated in degree 0, and which on morphisms is given by applying the duality pairing $H^\vee(n)\otimes H(n)\to\bQ$ to the tensor factors that are matched via horizontal strands in a diagram like the one in Figure \ref{fig:dwBr}, and permuting the remaining ones according to the vertical strands. 

By taking linear duals, we get a functor $K^\vee:\dwBr^{\mathrm{op}}\to\Gr(\Rep(G))$, which we use to construct a functor $K^\vee\otimes^{\dwBr}(-)$ from the category of functors $\dwBr\to\Gr(\Rep(G))$ to $\Gr(\Rep(G))$, using the coend
$$K^\vee\otimes^{\dwBr} M:=\int^{x\in\dwBr}K^\vee(x)\otimes M(x).$$
The category $\dwBr$ has a symmetric monoidality given by disjoint union and this allows us to define a symmetric monoidality on the category of functors $\dwBr\to\Gr(\Rep(G))$ by Day convolution. It follows from the general results of \cite[Section 2]{KupersRandal-Williams} that the functor $K^\vee\otimes^{\dwBr}(-) $ is strongly symmetric monoidal.

We will use this functor to describe $H^*(\IA_n,\bQ)^\alg$, so let us briefly describe the functor which will play the role of $M$. We define a functor $\PP':\dwBr\to\Gr(\bQ\dashmod)$, where $\Gr(\bQ\dashmod)$ is the category of graded $\bQ$-vector spaces (which we may also consider as the subcategory of $\Gr(\Rep(G))$ consisting of trivial graded representations), by letting $\PP'(S)$ be the graded vector space with a basis given by partitions of the set $S_1$ with\begin{enumerate}
    \item at least $|S_2|$ parts,
    \item  $|S_2|$ of the parts are labeled by the elements of $S_2$ and the remaining parts are unlabeled,
    \item and where there are no labeled parts of size 1.
\end{enumerate}
We consider such a labeled partition as having degree $|S_1|-|S_2|$, so in particular $\PP'(S)$ is concentrated in a single degree.

For a morphism described by a diagram such as that in Figure \ref{fig:dwBr}, the functor acts by permuting and relabeling according to the vertical strands, and merging parts according to the horizontal strands. We describe this in detail in Section \ref{subsec:Functoriality} below.

Tensoring pointwise with the sign representation $\det(S):=\det\left(\bQ^{S_1}\right)\otimes\det\left(\bQ^{S_1}\right)$ of the product of the symmetric groups on $S_1$ and $S_2$, which we consider as a graded representation concentrated in degree 0, we obtain a functor $\PP'\otimes\det:\dwBr\to\Gr(\bQ\dashmod)$.
We prove in Section \ref{sec:twistedcoeffs} that $\PP'\otimes\det$ is a commutative ring object in the category of functors from $\dwBr$ to $\bQ\dashmod$, so $K^\vee\otimes^{\dwBr}(\PP'\otimes\det)$ is a commutative ring object in $\Gr(\Rep(G))$. With this, we can now state the categorical version of the main theorem:

\begin{restatable}{thmx}{theoremB}\label{theoremB}
  There is a graded ring homomorphism
    \begin{equation}\label{eq:thmB}
        K^\vee\otimes^{\dwBr}(\PP'\otimes\det)\otimes\bQ[y_4,y_8,\ldots]\to H^*(\IA_n,\bQ)^{\alg}
    \end{equation}
     If there is a $Q\ge 0$ such that $H^*(\IA_n,\bQ)$ satisfies Borel vanishing in each degree $*<Q$, then in a range $n\gg *$, the ring homomorphism (\ref{eq:thmB}) is an isomorphism in degrees $*\le Q$.
\end{restatable}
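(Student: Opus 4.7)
The plan is to prove Theorem \ref{theoremB} by organizing the Hochschild--Serre spectral sequences for (\ref{eq:IA-ses}), with all relevant tensor-power coefficients, into a $\dwBr$-indexed diagram and extracting the algebraic cohomology via the coend formalism of \cite{KupersRandal-Williams}. Since $\IA_n$ acts trivially on $K(S)=H(n)^{\otimes S_1}\otimes H^\vee(n)^{\otimes S_2}$, one has $H^q(\IA_n,K(S))\cong H^q(\IA_n,\bQ)\otimes K(S)$ as $\GL_n(\bZ)$-modules, and one considers the family
$$E_2^{p,q}(S) = H^p(\GL_n(\bZ),\, H^q(\IA_n,\bQ)\otimes K(S)) \Longrightarrow H^{p+q}(\Aut(F_n), K(S))$$
whose abutment is computed stably in \cite{Lindell2022stable}. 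A Schur--Weyl argument shows that the functor $S\mapsto (H^q(\IA_n,\bQ)\otimes K(S))^{\GL_n(\bZ)}$ on $\dwBr^{\mathrm{op}}$ determines $H^q(\IA_n,\bQ)^{\alg}$ completely (any map from an algebraic representation into $V$ lands in $V^{\alg}$), and the coend $K^\vee\otimes^{\dwBr}(-)$ is the natural construction reassembling such a functor into a $\GL_n(\bZ)$-representation.

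To construct the ring homomorphism itself, I would start with the Kawazumi--Cohen-Pakianathan--Farb isomorphism $H^1(\IA_n,\bQ)\cong\exterior{2}H^\vee(n)\otimes H(n)$, extend multiplicatively by cup product to a map out of the full exterior algebra, and then adjoin the anti-transgressed classes $y_{4i}\in H^{4i}(\IA_n,\bQ)^{\GL_n(\bZ)}$ coming from the Borel generators $x_{4i+1}$ (Section \ref{subsec:invariantstrivialcoeffs}). Rewriting the domain in the form $K^\vee\otimes^{\dwBr}(\PP'\otimes\det)\otimes\bQ[y_4,y_8,\ldots]$ is a formal unpacking of the coend: a labeled partition in $\PP'(S)$ corresponds to a wedge product of elementary tensors in $\exterior{2}H^\vee(n)\otimes H(n)$ indexed by the parts, the $\det$-twist encodes the antisymmetry of the wedge, and the $\dwBr$-morphisms that merge parts via horizontal strands impose the $\mathsf{IH}$-relations through the coend. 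That the resulting map is a ring homomorphism follows from the commutative ring object structure on $\PP'\otimes\det$ established in Section \ref{sec:twistedcoeffs}, combined with the strong symmetric monoidality of $K^\vee\otimes^{\dwBr}(-)$.

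For the isomorphism statement, I would proceed by induction on $Q$. Borel vanishing for $H^q(\IA_n,\bQ)$ in degrees $q<Q$, combined with the propagation result of Section \ref{sec:representationtheory} that passes Borel vanishing through tensoring with finite-dimensional coefficients, splits each $E_2$-page in rows $q<Q$ as $H^p(\GL_n(\bZ),\bQ)\otimes(H^q(\IA_n,\bQ)\otimes K(S))^{\GL_n(\bZ)}$. The inductive hypothesis identifies the coend below $q=Q$, the abutment is known stably from \cite{Lindell2022stable}, and comparison determines the invariants on row $Q$ for every $S$ simultaneously. The main obstacle is precisely this \emph{one extra degree} of surjectivity: pushing the conclusion to degree $Q$ while only assuming Borel vanishing in degrees $<Q$. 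This requires controlling the higher differentials $d_r\colon E_r^{0,Q}\to E_r^{r,Q-r+1}$ for all $r\ge 2$ and all tensor coefficients at once; fortunately their targets lie in rows strictly below $Q$, where the inductive hypothesis applies, so the problem reduces to checking that compatibility with the abutment and with the coend structure across all $S$ leaves no room beyond the stated answer. Without the $\dwBr$-indexed coend formalism to bundle the coefficient choices coherently, this comparison would not be tractable.
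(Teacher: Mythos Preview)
Your overall architecture is right: the paper does assemble the Hochschild--Serre spectral sequences with coefficients $K(S)$ into a $\wBr_n$-indexed diagram and extract the algebraic part via the coend machinery. But you have inverted the logical order of Theorems~\ref{theoremA} and~\ref{theoremB}, and this causes a real gap in your construction of the map.

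You propose to build the map~(\ref{eq:thmB}) by starting from the degree-$1$ isomorphism, extending multiplicatively to the exterior algebra, and then ``rewriting the domain'' as the coend, calling this a ``formal unpacking''. It is not formal: the identification of $K^\vee\otimes^{\dwBr}(\PP'\otimes\det)$ with $\exterior{*}(\exterior{2}H^\vee(n)\otimes H(n))/\mathsf{IH}$ is exactly the content of Section~\ref{sec:ring-structure} (Proposition~\ref{prop:ringstructure}), proved via a graph-reduction argument \emph{after} Theorem~\ref{theoremB} is established. The paper constructs~(\ref{eq:thmB}) directly, with no reference to the exterior algebra or $\mathsf{IH}$-relations: for each $S$ one has the explicit map $\Phi_S:\PP(S)\otimes\det(S)\to H^*(\Aut(F_n),K(S))$ built from Kawazumi's classes $h_{p,1}$ for \emph{all} $p$ (not just $h_{2,1}$), one restricts to $\IA_n$ and cups with the anti-transgressed invariants to get $\Phi^t_S$, and one checks these assemble into a natural transformation of $\wBr_n$-functors. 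Then Proposition~\ref{prop:wBr1} is the single black box that converts a natural isomorphism $i_*(A)\cong[K\otimes B]^G$ into an isomorphism of the coend onto $B^{\alg}$; this is where your ``Schur--Weyl argument'' lives, but the functor is on $\wBr_n$, not $\dwBr^{\mathrm{op}}$, and the passage through $i_*$ is essential.

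On the degree-$Q$ versus degree-$(Q-1)$ issue: the paper does not run a direct induction on $Q$ across all $S$. It proves two separate statements. First, Theorem~\ref{thm:GL-invariants} computes $H^*(\IA_n,\bQ)^{\GL_n(\bZ)}$ up to degree $Q$ via a Zeeman-style spectral sequence comparison (Proposition~\ref{prop:invariants-SSeq}, supported by Lemmas~\ref{lemma:chainmap}--\ref{lemma:injectivity-SSeq}); this is where the $+1$ improvement over \cite{HabiroKatada2023stable} is actually earned, and your sketch does not supply a substitute for this argument. Second, for twisted coefficients the paper simply cites \cite[Lemma~7.7]{HabiroKatada2023stable} (recorded as the theorem preceding Corollary~\ref{corollary:invariants}), observing that tracking degrees in that proof already gives the extra degree. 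These combine in Corollary~\ref{corollary:invariants} to say $\Phi^t_S$ is an isomorphism in degrees $\le Q$, after which Proposition~\ref{prop:wBr1} finishes. Your proposed uniform induction would have to reprove both of these, and the vague appeal to ``compatibility with the abutment across all $S$'' does not replace the explicit chain-map lemmas that control $E_{k+1}^{0,k}$ and $E_{k+1}^{k+1,0}$.
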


Theorem \ref{theoremA} is proven from Theorem \ref{theoremB} by showing that the graded commutative ring $K^\vee\otimes^{\dwBr}(\PP'\otimes\det)$ is in fact isomorphic to $\exterior{*}(\exterior{2}H^\vee(n)\otimes H(n))/\mathsf{IH}$, in a stable range of degrees. This is proven in Section \ref{sec:ring-structure} below.

\begin{remark}
    In correspondence with the author, regarding an earlier draft of this paper, it was explained by Mai Katada that in a stable range of degrees we have that $K^\vee\otimes^{\dwBr}(\PP'\otimes\det)\cong W_*(n)^\vee$, as graded $\GL_n(\bZ)$-representations, and that combining this fact with certain results from \cite{Katada2022stable}, it is possible to prove her conjecture about the stable Albanese (co)homology. She kindly agreed to add her proof in Appendix \ref{Appendix} below.
\end{remark}

\subsection{Overview of the paper} Let us give a brief overview of the structure of the paper. 

In Section \ref{sec:representationtheory}, we recall some background on representation theory, including Schur-Weyl duality and the invariant theory of the general linear group. After this we cover Borel vanishing in some more detail and show that every representation of $\GL_n(\bZ)$ which is finite dimensional satisfies Borel vanishing. 

In Section \ref{sec:walledBrauer} we start by recalling the general categorical setup from \cite{KupersRandal-Williams} and some of their general results. We then apply this to the walled Brauer category (a category which has $\dwBr$ as a subcategory) and prove the main categorical lemmas concerning this category needed to prove Theorem \ref{theoremB}. This section closely follows \cite[Sections 2.2-2.3]{KupersRandal-Williams}.

In Section \ref{sec:twistedcoeffs}, we recall the main result of \cite{Lindell2022stable}, which is a description of the stable cohomology of $\Aut(F_n)$ with coefficients in the representations $H(n)^{\otimes p}\otimes H^\vee(n)^{\otimes q}$, in terms of a space spanned by partitions of $\{1,\ldots,p\}$ labeled by the set $\{1,\ldots,q\}$, for any $p,q\ge 0$. However, in \cite{Lindell2022stable} only the existence of an isomorphism between these spaces is demonstrated, so in this section we also construct such an isomorphism explicitly, using the results of Kawazumi \cite{KawazumiMagnusExpansions} and Kawazumi-Vespa \cite{KawazumiVespa}. After this, we prove that this actually defines an isomorphism of symmetric monoidal functors on the walled Brauer category.

In Section \ref{sec:catdescriptionofcohomology}, we prove Theorem \ref{theoremB}, by studying the Hochschild-Serre spectral sequence associated to the short exact sequence (\ref{eq:IA-ses}) and then combining this with the categorical tools from the previous sections. The analysis of the spectral sequence closely follows the methods of \cite{HabiroKatada2023stable} and applies some of their results directly, whereas some parts requires some modifications in order to obtain the degree shift that appears in Theorem \ref{theoremB}.

In Section \ref{sec:ring-structure}, we then apply the methods of \cite[Section 5]{KupersRandal-Williams} to describe the ring structure of $K^\vee\otimes^{\dwBr}(\PP'\otimes\det)$ in terms of generators and relations and thereby obtain Theorem \ref{theoremA} from Theorem \ref{theoremB}. 

In Appendix \ref{Appendix}, written by Mai Katada, it is explained how to use the results of Section \ref{sec:ring-structure} to compute the stable Albanese (co)homology of $\IA_n$.

\subsection{Conventions}\label{subsec:conventions} In the entire paper, we will work over $\bQ$. In particular, all cohomology is taken with $\bQ$-coefficients, unless stated otherwise. We will write $\bQ\dashmod$ for the category of $\bQ$-vector spaces. If $G$ is a group, we write $\Rep(G)$ for the category of $G$-representations over $\bQ$. For a set $A$, we write $\bQ A$ for the vector space with $A$ as basis.

Throughout the paper, we consider several categories where the objects are pairs of finite sets. If $S=(S_1,S_2)$ is such a pair and $I=(I_1,I_2)$ is another, then for brevity we will write $I\subseteq S$ whenever $I_1\subseteq S_1$ and $I_2\subseteq S_2$. If $I\subseteq S$, we will also write $S\setminus I:=(S_1\setminus I_1,S_2\setminus I_2)$ and if $S=(S_1,S_2)$ and $T=(T_1,T_2)$ are two pairs of finite sets, we will write $S\sqcup T:=(S_1\sqcup T_1,S_2\sqcup T_2)$. Furthermore, we write $|S|:=|S_1|+|S_2|$ for the sum of the cardinalities of the finite sets in the pair $S$. 

For any positive integer $k\ge 0$, we will use the notation $[k]:=\{1,2,\ldots,k\}$ for $k\ge 1$ and $[k]=\varnothing$ for $k=0$. If $\Lambda$ is a category whose objects are pairs of fintie sets and $F:\Lambda\to\mathcal{C}$ is a functor to some category $\cC$, we will typically write $F(p,q):=F([p],[q])$ for brevity.

\subsection*{Acknowledgements} This project started while EL was a PhD student at Stockholm University, and he would like to thank his PhD advisor Dan Petersen for many useful discussions and helpful comments on the paper. He would also like to thank Alexander Kupers and Oscar Randal-Williams for answering numerous questions about their paper \cite{KupersRandal-Williams}. Furthermore, he would like to thank Mai Katada for writing the appendix to this paper, as well as for many helpful comments. He is also grateful to Najib Idrissi for many constructive discussions and comments on the paper. Finally, he would like to thank Kazuo Habiro and Arthur Soulié for helpful comments on earlier drafts of the paper.

MK would like to thank Erik Lindell for sending his draft to her.
Proposition 6.2 motivated her to prove her conjecture on the Albanese (co)homology of $\IA_n$.
She also thanks Kazuo Habiro for valuable discussions at the University of Tokyo. 
She was supported by JSPS KAKENHI Grant Number 
JP22KJ1864.

\section{Background on representation theory}\label{sec:representationtheory}

\noindent In this section, we recall some background on representation theory needed to prove the main theorem. In order to study Torelli groups of surfaces and their higher dimensional analogues in \cite{KupersRandal-Williams}, Kupers and Randal-Williams use representation theory of orthogonal and symplectic groups. In our setting, we instead need to consider representations of the general linear group. In this section we therefore retread much of what Kupers and Randal-Williams cover in Section 2 of their paper, but in the case of the general linear group.

\subsection{Representation theory of the general linear group} Let $H(n):=H_1(F_n,\bQ)$. Let $\GL_n(\bQ)$ denote the group of $\bQ$-linear automorphisms of $H(n)$.  The group $\GL_n(\bQ)$ is the $\bQ$-points of the algebraic group $\GGL_n$, which is defined over $\bQ$.

\subsubsection{Irreducible representations of $\GGL_n$ and of symmetric groups} We start by recalling the definition of the irreducible representations of the algebraic group $\GGL_n$, as well as those of symmetric groups, as these are intimately connected. In particular, the irreducible representations of both groups are classified using \textit{partitions}.

 \begin{definition}
     A parition is a decreasing sequence $\lambda=(\lambda_1\ge\lambda_2\ge\cdots\ge\lambda_l\ge 0)$ of non-negative integers that eventually reaches zero. We call $|\lambda|:=\lambda_1+\cdots+\lambda_l$ the \textit{weight} of the partition, and if $|\lambda|=k$ for some non-negative integer $k$, we say that $\lambda$ is a partition of $k$. The largest $l$ such that $\lambda_l>0$ is called the \textit{length} of the partition and we denote it by $l(\lambda)$. 
 \end{definition}

The irreducible representations of the symmetric groups $\Sigma_k$ are called \textit{Specht modules} and are indexed by partitions of $k$. We denote the Specht module associated to a partition $\lambda$ of weight $k$ by $S^\lambda$. This is the image of the \textit{Young symmetrizer} $c_\lambda\in\bQ[\Sigma_k]$ acting on $\bQ[\Sigma_k]$ (see for example \cite{FultonHarris} for a definition of $c_\lambda$).

We can also associate a representation of $\GGL_n$ to any partition $\lambda$, of any weight $k$, by applying the \textit{Schur functor} associated to $\lambda$:
\begin{equation}
V_\lambda:=S_\lambda(H(n))=S^\lambda\otimes_{\Sigma_k} H(n)^{\otimes k}.    
\end{equation}
This representation is always irreducible, and non-zero as long as the length of $\lambda$ is at most $n$. For example, for $\lambda=(1)$, we simply get back $V_1=H(n)$. However, not all irreducible representations of $\GGL_n$ are of this kind, for example the dual representation of $H(n)$. For notational convenience, let us write $H^\vee(n):=H(n)^\vee$ for the linear dual representation. 

To describe all of the irreducible representations of $\GGL_n$, we need to consider the mixed tensor powers of $H(n)$ and $H^\vee(n)$. For $p,q\ge 0$, we let
\begin{equation}
    K_{p,q}(n):=H(n)^{\otimes p}\otimes H^\vee(n)^{\otimes q}.
\end{equation}
Note that by applying the duality pairing $\lambda: H(n)\otimes H^\vee(n)\to \bQ$, we get a $\GGL_n$-equivariant map $\lambda_{i,j}:K_{p,q}(n)\to K_{p-1,q-1}(n)$ for all $p,q\ge 1$, $1\le i\le p$ and $1\le j\le q$. We define
\begin{equation}
    K_{p,q}^\circ(n):=\bigcap_{i,j} \ker(\lambda_{i,j}).
\end{equation}
\begin{definition}
    If $\lambda$ and $\mu$ are partitions, we call the pair $(\lambda,\mu)$ a \textit{bipartition} and if $|\lambda|=p$ and $|\mu|=q$, we say it is a bipartition of $(p,q)$ and write $|(\lambda,\mu)|=(p,q)$. We define the \textit{length} of a bipartition $(\lambda,\mu)$ as $l(\lambda,\mu)=l(\lambda)+\lambda(\mu)$.
\end{definition} For a bipartition $(\lambda,\mu)$, we define representations
\begin{equation}
    V_{\lambda,\mu}:=(S^\lambda\otimes S^\mu)\otimes_{\Sigma_p\times\Sigma_q} K_{p,q}^\circ(n).
\end{equation}
 Similarly to above, we have that $V_{\lambda,\mu}$ is non-zero as long as $l(\lambda,\mu)\le n$ and irreducible in this case. Furthermore, these are all the irreducible representations of $\GGL_n$, up to isomorphism.

The following important theorem tells us how $K^\circ_{p,q}(n)$ decomposes into irreducibles:

\begin{theorem}[Schur-Weyl duality]\label{thm:schurweyl}
    The $\Sigma_p\times\Sigma_q\times\GGL_n$-representation $K_{p,q}^\circ(n)$ decomposes into irreducibles as
    $$K_{p,q}^\circ(n)\cong\bigoplus_{|(\lambda,\mu)|=(p,q)} S^\lambda\otimes S^\mu\otimes V_{\lambda,\mu}.$$
\end{theorem}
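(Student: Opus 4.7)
The plan is to combine classical (polynomial) Schur--Weyl duality with a double-commutant analysis that isolates the ``top layer'' $K_{p,q}^\circ(n) \subseteq K_{p,q}(n)$. Classical Schur--Weyl, applied separately to $H(n)^{\otimes p}$ and to $H^\vee(n)^{\otimes q}$ (using the natural identification $S_\mu(H^\vee(n)) \cong V_\mu^\vee$), followed by tensoring, would give the $\Sigma_p \times \Sigma_q \times \GGL_n$-decomposition
\[
K_{p,q}(n) \cong \bigoplus_{|\lambda|=p,\, |\mu|=q} S^\lambda \otimes S^\mu \otimes V_\lambda \otimes V_\mu^\vee.
\]
The catch is that $V_\lambda \otimes V_\mu^\vee$ generally splits further over $\GGL_n$ into mixed irreducibles $V_{\sigma,\tau}$ with $|\sigma| < |\lambda|$, $|\tau| < |\mu|$, corresponding to summands reachable by contraction; the part we want to survive in $K_{p,q}^\circ(n)$ is precisely the ``top'' summand $V_{\lambda,\mu}$.

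To make this precise I would pass to commutants. The $\GGL_n$-equivariant endomorphism algebra of $K_{p,q}(n)$ is, in the relevant stable range, the walled Brauer algebra $B_{p,q}(n)$; this is the First Fundamental Theorem for $\GGL_n$ in the mixed-tensor setting. The algebra $B_{p,q}(n)$ carries a two-sided ideal $J_{p,q}$ generated by diagrams containing a horizontal strand, with quotient $B_{p,q}(n)/J_{p,q} \cong \bQ[\Sigma_p \times \Sigma_q]$. The generators of $J_{p,q}$ act on $K_{p,q}(n)$ as ``insertion after contraction'', so they annihilate $K_{p,q}^\circ(n)$ by construction. Conversely, for $n$ large enough one checks that $J_{p,q}$ is exactly the kernel of the action on $K_{p,q}^\circ(n)$, so the effective commutant of $\GGL_n$ on $K_{p,q}^\circ(n)$ is $\bQ[\Sigma_p \times \Sigma_q]$.

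Since both $\GGL_n$ and $\Sigma_p \times \Sigma_q$ act semisimply over $\bQ$, the double commutant theorem then produces a decomposition
\[
K_{p,q}^\circ(n) \cong \bigoplus_{|\lambda|=p,\, |\mu|=q} S^\lambda \otimes S^\mu \otimes M_{\lambda,\mu},
\]
where each multiplicity space $M_{\lambda,\mu}$ is either zero or an irreducible $\GGL_n$-representation. Applying the functor $(S^\lambda \otimes S^\mu) \otimes_{\Sigma_p \times \Sigma_q} (-)$ to this decomposition and using the self-duality of Specht modules over $\bQ$ identifies $M_{\lambda,\mu}$ with the representation $V_{\lambda,\mu}$ defined in the paper, giving the stated formula. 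To finish I would verify that $V_{\lambda,\mu} \neq 0$ whenever $l(\lambda,\mu) \le n$, by exhibiting an explicit highest-weight vector in $K_{p,q}^\circ(n)$ of weight $(\lambda_1, \ldots, \lambda_{l(\lambda)}, 0, \ldots, 0, -\mu_{l(\mu)}, \ldots, -\mu_1)$ constructed from Young symmetrizers composed with an antisymmetrization over dual basis vectors.

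The hard part is the commutant step: establishing that $B_{p,q}(n) \twoheadrightarrow \End_{\GGL_n}(K_{p,q}(n))$ in the stable range, and pinning down the kernel of the induced action on $K_{p,q}^\circ(n)$ as exactly $J_{p,q}$. This is the substantive invariant-theoretic input, essentially the mixed-tensor First Fundamental Theorem for $\GGL_n$ together with a careful analysis of how the ideal of ``contraction diagrams'' interacts with the kernel of all pairings $\lambda_{i,j}$. Once this is in place, the remainder is a formal application of double commutant and Schur's lemma.
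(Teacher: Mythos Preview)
Your approach via the walled Brauer algebra and double commutant is correct and is one of the standard routes to this result; the paper itself does not prove the theorem but simply cites \cite[Theorem 1.1]{Koike}, treating it as known.

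Two small remarks on your write-up. First, the stable-range hedging is unnecessary: the decomposition holds for all $n$ (with $V_{\lambda,\mu}=0$ when $l(\lambda,\mu)>n$), since the First Fundamental Theorem $B_{p,q}(n)\twoheadrightarrow\End_{\GGL_n}(K_{p,q}(n))$ holds without restriction on $n$, and that surjectivity is all you use. Second, the sentence ``$J_{p,q}$ is exactly the kernel of the action on $K_{p,q}^\circ(n)$, so the effective commutant of $\GGL_n$ on $K_{p,q}^\circ(n)$ is $\bQ[\Sigma_p\times\Sigma_q]$'' skips a step: identifying the kernel tells you the \emph{image} of $B_{p,q}(n)$ in $\End(K_{p,q}^\circ(n))$ is $\bQ[\Sigma_p\times\Sigma_q]$, but not yet that this image exhausts $\End_{\GGL_n}(K_{p,q}^\circ(n))$. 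The missing observation is that $K_{p,q}^\circ(n)$ is a $\GGL_n$-equivariant direct summand of $K_{p,q}(n)$ (this is Proposition~\ref{prop:decomp1} in the paper), so any $\phi\in\End_{\GGL_n}(K_{p,q}^\circ(n))$ extends by zero to an element of $\End_{\GGL_n}(K_{p,q}(n))$, hence comes from some $b\in B_{p,q}(n)$ by FFT; since $J_{p,q}$ already annihilates $K_{p,q}^\circ(n)$, the $\bQ[\Sigma_p\times\Sigma_q]$-component of $b$ alone recovers $\phi$. With that filled in, your double-commutant argument goes through cleanly for all $n$.
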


\begin{proof}
    See for example \cite[Theorem 1.1]{Koike}.
\end{proof}

In order to use the categorical framework which we will describe in Section \ref{sec:walledBrauer} below, we will need to work with tensor powers of $H(n)$ and $H^\vee(n)$ over arbitrary finite sets. We therefore note that for any pair $S=(S_1,S_2)$ of finite sets, we can define $K_{S_1,S_2}(n)$ and $K_{S_1,S_2}^\circ(n)$ analogously to the definitions above. For brevity, we will often write $K_S(n)=K_{S_1,S_2}(n)$ and $K_S(n)^\circ=K_{S_1,S_2}^\circ(n)$

For a pair of finite sets $S=(S_1,S_2)$, we can also describe a decomposition of $K_{S}(n)$ in terms of the representations $K^\circ_{I}(n)$, for $I\subseteq S$. In order to do this, note that we have a form $\omega:\bQ\to H(n)\otimes H^\vee(n)$, which is dual to the pairing $\lambda$. We can describe it by sending 1 to the element corresponding to $\id_{H(n)}$ under the isomorphism $H(n)\otimes H^\vee(n)\cong\Hom_\bQ(H(n),H(n))$. If $e_i:=[x_i]\in H(n)$ are the standard basis elements, and we write $ e^\#_i$ for the dual basis elements in $H^\vee(n)$, we thus have
$$\omega(1)=\sum_{i=1}^ne_i\otimes  e^\#_i.$$
We remark that this means that $\lambda(\omega(1))=n$.

For any $s_1\in S_1$ and $s_2\in S_2$ we can apply the form to get a $\GGL_n$-equivariant map
$$\omega_{s_1,s_2}:K_{S\setminus(\{s_1\},\{s_2\})}(n)\to K_{S}(n).$$
We call this map an \textit{insertion}. The representation $K_{S}(n)$ satisfies the following:
\begin{proposition}\label{prop:decomp1}
    For any pair $S=(S_1,S_2)$ of finite sets we have
    \begin{equation}\label{eq:decomp1}
        K_{S}(n)= K^\circ_{S}(n)\oplus\sum_{\substack{s_1\in S_1\\ s_2\in S_2}}\omega_{s_1,s_2}(K_{S\setminus(\{s_1\},\{s_2\})}(n))
    \end{equation}
\end{proposition}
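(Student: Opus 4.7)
My plan is to proceed by induction on $\min(|S_1|,|S_2|)$, writing $W := \sum_{(s_1,s_2)\in S_1\times S_2} \omega_{s_1,s_2}(K_{S\setminus(\{s_1\},\{s_2\})}(n))$ for brevity. The base case is $\min(|S_1|,|S_2|)=0$: one of the sets is empty, so there are no contractions $\lambda_{i,j}$ and no admissible insertions $\omega_{s_1,s_2}$; both sides equal $K_S(n)$ trivially.

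For the inductive step, I need to establish (a) $K^\circ_S(n) + W = K_S(n)$ and (b) $K^\circ_S(n) \cap W = 0$. For (a), the plan is to invoke complete reducibility of $K_S(n)$ as a $\GGL_n$-representation and the extended Schur--Weyl decomposition: every irreducible $V_{\lambda,\mu}$ appearing in $K_S(n)$ satisfies $|\lambda| \le |S_1|$, $|\mu| \le |S_2|$, and $|S_1|-|\lambda| = |S_2|-|\mu|$. Theorem~\ref{thm:schurweyl} identifies $K^\circ_S(n)$ with the isotypic pieces for which $|\lambda|=|S_1|$, $|\mu|=|S_2|$. For any other bipartition, applying the inductive hypothesis to $K_{S\setminus(\{s_1\},\{s_2\})}(n)$ shows that a representative of the corresponding isotypic piece in $K_S(n)$ can be realized inside the image of some $\omega_{s_1,s_2}$, so $W$ contains all lower isotypic components.

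For (b), the key computational input is the set of identities for the compositions $\lambda_{s_1',s_2'} \circ \omega_{s_1,s_2}$: when $(s_1',s_2') = (s_1,s_2)$ this composition equals $n \cdot \id$; when exactly one of $s_1' = s_1$ or $s_2'=s_2$ holds, it acts as a ``relabeling'' between different $K_{S\setminus(\{\cdot\},\{\cdot\})}(n)$'s; and when $s_1'\ne s_1$, $s_2'\ne s_2$, it is the insertion of a smaller contraction. Given $v \in K^\circ_S(n)\cap W$, first use the inductive hypothesis on each $K_{S\setminus(\{s_1\},\{s_2\})}(n)$ to rewrite $v = \sum \omega_{s_1,s_2}(x_{s_1,s_2})$ with each $x_{s_1,s_2} \in K^\circ_{S\setminus(\{s_1\},\{s_2\})}(n)$ (absorbing any $W$-contributions into other terms of the sum). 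The vanishing of all $\lambda_{s_1',s_2'}(v)$ then yields a square linear system in the $x_{s_1,s_2}$: the ``off-diagonal'' terms with $s_1' \ne s_1$ and $s_2' \ne s_2$ vanish automatically because $x_{s_1,s_2}$ is trace-free, the ``relabeling'' terms can be controlled since they move between distinct index sets, and the diagonal coefficient is $n$, so the system forces $x_{s_1,s_2}=0$ for all pairs.

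The hardest step will be (b): making the bookkeeping of the relabeling off-diagonal terms precise, since they mix the various summands $\omega_{s_1,s_2}(K_{S\setminus(\{s_1\},\{s_2\})}(n))$ in a non-trivial way. A cleaner alternative that largely sidesteps this obstacle is to argue (b) via complete reducibility alone: show that $W$ is generated, under $\GGL_n$, by isotypic components $V_{\lambda,\mu}$ with $|\lambda|<|S_1|$ (using the inductive hypothesis to see that images of $\omega_{s_1,s_2}$ consist of such components), while $K^\circ_S(n)$ consists purely of the top components with $|\lambda|=|S_1|$; since these sets of isotypic components are disjoint, their intersection inside the semisimple module $K_S(n)$ is zero.
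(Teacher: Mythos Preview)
Your isotypic-component argument for (b) is clean and correct: by Schur--Weyl, $K^\circ_S(n)$ consists only of irreducibles $V_{\lambda,\mu}$ with $|\lambda|=|S_1|$ and $|\mu|=|S_2|$, while each $\omega_{s_1,s_2}(K_{S\setminus(\{s_1\},\{s_2\})}(n))$, being an equivariant (and injective, since $\lambda_{s_1,s_2}\circ\omega_{s_1,s_2}=n\cdot\id$) image of a strictly smaller mixed tensor power, consists only of irreducibles with $|\lambda|\le|S_1|-1$; hence the intersection is zero. No inductive hypothesis is even needed here.

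There is, however, a genuine gap in your argument for (a). What you actually establish is that every lower irreducible \emph{type} $V_{\lambda,\mu}$ occurs somewhere in $W$, but this says nothing about multiplicities: you need the $V_{\lambda,\mu}$-isotypic component of $W$ to equal that of $K_S(n)$, i.e.\ $\Hom_G(V_{\lambda,\mu},W)=\Hom_G(V_{\lambda,\mu},K_S(n))$ for every lower $(\lambda,\mu)$. Neither the inductive hypothesis applied to $K_{S\setminus(\{s_1\},\{s_2\})}(n)$ nor the injectivity of $\omega_{s_1,s_2}$ gives this; knowing the decomposition of each $K_{S\setminus(\{s_1\},\{s_2\})}(n)$ tells you about the multiplicities in each individual image $\omega_{s_1,s_2}(K_{S\setminus(\{s_1\},\{s_2\})}(n))$, but not how these images overlap inside $K_S(n)$, which is precisely what controls the multiplicities in their sum $W$. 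Your linear-system idea could in principle be run in the other direction to produce, for each $v\in K_S(n)$, an explicit $w\in W$ with $v-w\in K^\circ_S(n)$, but as you acknowledge the off-diagonal relabeling terms make this non-trivial.

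The paper sidesteps all of this with a short inner-product argument. Equip $H(n)$ with the standard positive-definite form $\langle e_i,e_j\rangle=\delta_{ij}$, extend to $H^\vee(n)$ and then to $K_S(n)$, and compute directly that $\langle\omega(1),a\otimes\beta\rangle=\lambda(a,\beta)$. It follows that $\ker(\lambda_{s_1,s_2})=\mathrm{im}(\omega_{s_1,s_2})^\perp$ for every pair, and intersecting over all pairs gives $K^\circ_S(n)=W^\perp$. Positive-definiteness over $\bQ$ then yields $K_S(n)=W^\perp\oplus W=K^\circ_S(n)\oplus W$ immediately, with no induction, no Schur--Weyl, and no isotypic bookkeeping.
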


\begin{proof}
    We prove this similarly to \cite[Equation 17.12]{FultonHarris}. Letting $e_i:=[x_i]\in H(n)$ denote the standard basis, we have a standard inner product on $H(n)$, which we will denote by $\langle\cdot,\cdot\rangle$, given by $\langle e_i,e_j\rangle=\delta_{ij}$. This induces a dual metric on $H^\vee(n)$ and these two metrics together induce a metric on $K_S(n)=H(n)^{\otimes S_1}\otimes H^\vee(n)^{\otimes S_2}$, for any pair $S=(S_1,S_2)$ of finite sets , which we will also denote by $\langle\cdot,\cdot\rangle$, for simplicty. 

    Now let $a\in H(n)$ and $\beta\in H^\vee(n)$. Writing these as $a=\sum_{i=1}^n a_ie_i$ and $\beta=\sum_{i=1}^n b_i e^\#_i$, for $a_i,b_i\in\bQ$, we get
    \begin{align*}
        \langle\omega(1),a\otimes \beta\rangle=\sum_{i,j,k}a_jb_k\langle e_i,e_j\rangle\langle  e^\#_i, e^\#_k\rangle=\sum_{i,j,k}a_jb_k\delta_{ij}\delta_{ik}=\sum_i a_ib_i=\lambda(a,\beta).
    \end{align*}
    It follows from this that for any $s_1\in S_1$, $s_2\in S_2$, $\ker(\lambda_{s_1,s_2})=\mathrm{im}(\omega_{s_1,s_2})^\perp$, which implies (\ref{eq:decomp1}).        
\end{proof}

We may now apply this proposition inductively to $K_{S\setminus(\{s_1\},\{s_2\})}$ and decompose further, which corresponds to applying several insertions at once. To make this more precise, let us first make the following definition:
\begin{definition}
    If $S=(S_1,S_2)$ is a pair of finite sets, a \textit{bipartite matching} of $S$ is a partition of $S_1\sqcup S_2$ into disjoint ordered pairs $(a,b)$ such that $a\in S_1$ and $b\in S_2$. Let $\rM(S)$ denote the set of bipartite matchings of $S$.
\end{definition}

\begin{remark}
    Note that the a bipartite matching of $S=(S_1,S_2)$ is equivalent to a bijection between the two sets. However, we use this terminology in analogy with the one used in \cite{KupersRandal-Williams}. 
\end{remark}

If $I\subseteq S$ with $|I_1|=|I_2|$ and $m\in \rM(I)$, we thus get a $\GGL_n$-equivariant map
$$\omega_m:K_{S\setminus I}(n)\to K_{S}(n)$$
and we can decompose $K_{S}(n)$ as follows:

\begin{proposition}\label{prop:decomp2}
    For $S$ a pair of finite sets, the representation $K_{S}(n)$ decomposes as
    $$K_{S}(n)\cong \bigoplus_{i=0}^{\min(|S_1|,|S_2|)}\sum_{\substack{I\subseteq S\\ |I_1|=|I_2|=i}}\sum_{\substack{m\in\rM(I)}}\omega_m(K_{S\setminus I}^\circ(n)).$$
    For $n\ge |S|$, this is a direct sum
    $$K_S(n)\cong\bigoplus_{I\subseteq S}\bigoplus_{m\in \rM(I)}\omega_m(K_{S\setminus I}^\circ(n)).$$
\end{proposition}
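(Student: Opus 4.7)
The plan is to prove both statements simultaneously by induction on $\min(|S_1|,|S_2|)$, iterating Proposition \ref{prop:decomp1} and using Schur-Weyl duality (Theorem \ref{thm:schurweyl}) to control the combinatorics.

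The sum identity itself will follow from iterating Proposition \ref{prop:decomp1}. The base case $\min(|S_1|,|S_2|)=0$ is immediate, since then $K_S(n)=K_S^\circ(n)$ and the only valid pair is $(I,m)=(\varnothing,\varnothing)$. For the inductive step, I would apply Proposition \ref{prop:decomp1} to express $K_S(n)$ as $K_S^\circ(n)$ plus the sum of images of the elementary insertions $\omega_{s_1,s_2}$, and then invoke the inductive hypothesis on each $K_{S\setminus(\{s_1\},\{s_2\})}(n)$. The key computation here is that elementary insertions compose in the expected way: $\omega_{s_1,s_2}\circ\omega_m=\omega_{m\cup\{(s_1,s_2)\}}$ whenever $m\in\rM(I)$ for some $I\subseteq S\setminus(\{s_1\},\{s_2\})$. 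Assembling the pieces then yields the claimed decomposition as a sum of subspaces indexed by pairs $(I,m)$.

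The outer direct sum over $i$, valid for all $n$, can then be read off from Schur-Weyl duality. The subspace $\omega_m(K_{S\setminus I}^\circ(n))$ is the $\GGL_n$-equivariant image of $K_{S\setminus I}^\circ(n)$, which by Theorem \ref{thm:schurweyl} is isotypic for irreducibles $V_{\lambda,\mu}$ with $|(\lambda,\mu)|=(|S_1|-i,|S_2|-i)$ whenever $|I_1|=|I_2|=i$. Since bipartitions of different total weight are distinct, the contributions from different $i$-layers lie in disjoint $\GGL_n$-isotypic subspaces of $K_S(n)$, forcing the outer sum to be direct.

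To upgrade this to the full direct sum statement under the hypothesis $n\ge|S|$, I would argue by comparing dimensions. By the first part, the natural map $\bigoplus_{(I,m)}K_{S\setminus I}^\circ(n)\to K_S(n)$ is surjective, so it suffices to check that source and target both have dimension $n^{|S|}$. Expanding $\dim K_{S\setminus I}^\circ(n)$ via Schur-Weyl and using the standard Möbius-type identity $n^{p+q}=\sum_{i}\binom{p}{i}\binom{q}{i}i!\dim K_{p-i,q-i}^\circ(n)$ gives the required equality; the hypothesis $n\ge|S|$ ensures that no relevant $V_{\lambda,\mu}$ vanishes due to the length constraint $l(\lambda,\mu)\le n$. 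I expect the main obstacle to be exactly this dimension count: for smaller $n$, some $V_{\lambda,\mu}$ with $l(\lambda,\mu)>n$ are zero, and this generates genuine linear relations among the insertion maps $\omega_m$ within a fixed $i$-layer, so the direct sum structure only holds in the stable range.
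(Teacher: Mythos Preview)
Your inductive argument for the first decomposition matches the paper's proof. Your use of Schur--Weyl duality to justify that the outer sum over $i$ is direct (via disjoint $\GGL_n$-isotypic components) is in fact a useful clarification: the paper's inductive step passes from
\[
K_S^{(0)}(n)\oplus\sum_{s_1,s_2}\omega_{s_1,s_2}\Bigl(\bigoplus_{i=0}^{s}K_{S\setminus(\{s_1\},\{s_2\})}^{(i)}(n)\Bigr)
\]
directly to $\bigoplus_{i=0}^{s+1}K_S^{(i)}(n)$ without explaining why the layers $K_S^{(i+1)}(n)$ for different $i$ intersect trivially once one sums over all $(s_1,s_2)$. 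Your isotypic argument fills that gap cleanly.

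For the second statement the two approaches diverge. The paper does not argue at all: it cites \cite[Theorems~2.7, 2.11]{BenkartBrauer} to conclude that the inner sum over $(I,m)$ within each $i$-layer is direct for $n\ge|S|$. Your dimension-count strategy is valid in outline, but the identity
\[
n^{p+q}=\sum_i\binom{p}{i}\binom{q}{i}\,i!\,\dim K^\circ_{p-i,q-i}(n)
\]
that you invoke as ``standard'' is not elementary. It is equivalent to the semisimplicity of the walled Brauer algebra $B_{p,q}(n)$ (and its double-centraliser pairing with $\GL_n$) in the range $n\ge p+q$, which is exactly what the Benkart et al.\ reference establishes. So your route does not avoid the external input---it relocates it into an unproven dimension formula. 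If you can give an independent derivation of that identity (for instance a direct inclusion--exclusion computation of $\dim K^\circ_{p,q}(n)$ that does not already presuppose the direct sum), that would be a genuine simplification over the paper; otherwise both arguments ultimately rest on the same result.
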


\begin{proof}
    Let $s=\min(|S_1|,|S_2|)$. We start introducing the notation that for any $0\le i\le s$,
    $$K^{(i)}_{S}(n):=\sum_{\substack{I\subseteq S\\ |I_1|=|I_2|=i}}\sum_{\substack{m\in\rM(I)}}\omega_m(K_{S\setminus I}^\circ(n))\subseteq K_{S}(n).$$
     Thus we first want to show that
    $$K_{S}(n)=\bigoplus_{i=0}^{s} K_{S}^{{(i)}}(n).$$
    We prove this by induction on $s$. If $s=0$, we have $K_{S}^{(s)}(n)=K_{S}^\circ(n)=K_{S}(n)$, so the statement is trivially true. Now suppose that statement holds for some $s\ge 0$ and suppose that $\min(|S_1|,|S_2|)=s+1$. By first applying Proposition \ref{prop:decomp1} and then the inductive assumption, we get
    \begin{align*}
        K_{S}(n)&=K_{S}^{(0)}(n)\oplus\sum_{\substack{s_1\in S_1\\ s_2\in S_2}}\omega_{s_1,s_2}(K_{S\setminus(\{s_1\},\{s_2\})}(n))\\
        &=K_{S}^{(0)}(n) \oplus\sum_{s_1,s_2}\omega_{s_1,s_2}\left(\bigoplus_{i=0}^s K_{S\setminus(\{s_1\},\{s_2\})}^{(i)}(n)\right)\\
        &=\bigoplus_{i=0}^{s+1} K_{S}^{(i)}(n).
    \end{align*}
    Finally, let us assume that $n\ge |S|$. In this case, it follows by \cite[Theorem 2.7, 2.11]{BenkartBrauer} that the sum
    $$K^{(i)}_{S}(n):=\sum_{\substack{I\subseteq S\\ |I_1|=|I_2|=i}}\sum_{\substack{m\in\rM(I)}}\omega_m(K_{S\setminus I}^\circ(n))\subseteq K_{S}(n)$$
    is in fact an internal direct sum. Thus the second part of the proposition follows.
\end{proof}

In anticipation of the next section, we also note the following basic observation about insertions:

\begin{lemma}\label{lemma:contractioninsertion}
    If $v\in H(n)$, then $\lambda_{1,1}(v\otimes\omega(1))=v$. Similarly, if $\alpha\in H^\vee(n)$, then $\lambda_{1,2}(\omega(1)\otimes\alpha)=\alpha$.
\end{lemma}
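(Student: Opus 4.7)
The plan is to prove both statements by direct computation, using the explicit expression for $\omega(1)$ in terms of the standard basis given just before the lemma, together with the definition of the duality pairing $\lambda$.

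For the first identity, I would expand $v = \sum_{j=1}^n v_j e_j \in H(n)$ in the standard basis. Then
\[
v\otimes \omega(1) \;=\; \sum_{j=1}^n\sum_{i=1}^n v_j\, e_j\otimes e_i\otimes e_i^\#
\]
lies in $K_{2,1}(n)=H(n)\otimes H(n)\otimes H^\vee(n)$. The map $\lambda_{1,1}$ contracts the first $H(n)$-tensorand with the (only) $H^\vee(n)$-tensorand via $\lambda(e_j, e_i^\#)=\delta_{ij}$, producing $\sum_{i,j} v_j\delta_{ij}\, e_i = \sum_i v_i e_i = v$, as required. The second identity is completely analogous: write $\alpha = \sum_j \alpha_j e_j^\#$, unpack $\omega(1)\otimes\alpha\in K_{1,2}(n)$, and apply $\lambda_{1,2}$, which pairs the single $H(n)$-slot with the second $H^\vee(n)$-slot (namely $\alpha$), yielding $\sum_{i,j} \alpha_j \delta_{ij}\, e_i^\# = \alpha$.

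There is no real obstacle here; the content of the lemma is just that $\omega(1)$ acts as a reproducing kernel for the pairing $\lambda$, which is immediate once one uses dual bases. The only thing to be careful about is bookkeeping of which tensor slot is being contracted by $\lambda_{1,1}$ versus $\lambda_{1,2}$, which the convention established in the paragraph defining $\lambda_{i,j}:K_{p,q}(n)\to K_{p-1,q-1}(n)$ resolves unambiguously.
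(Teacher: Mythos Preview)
Your proof is correct. The paper takes a slightly more conceptual route: it identifies $K_{2,1}(n)\cong H(n)\otimes\Hom_\bQ(H(n),H(n))$ via $H(n)\otimes H^\vee(n)\cong\Hom_\bQ(H(n),H(n))$, observes that under this identification $\lambda_{1,1}$ becomes the evaluation map, and recalls that $\omega(1)$ corresponds to $\id_{H(n)}$ by definition; the first identity is then just evaluation of the identity on $v$. The second statement is handled the same way via $H(n)\otimes H^\vee(n)\cong\Hom_\bQ(H^\vee(n),H^\vee(n))$. Your explicit basis computation and the paper's coordinate-free argument are of course equivalent (your calculation is exactly what verifies that $\omega(1)\mapsto\id$ and that $\lambda_{1,1}$ is evaluation), so the difference is purely stylistic: your approach is more elementary and self-contained, while the paper's phrasing makes the reason the identities hold (``contracting against the identity endomorphism'') transparent without choosing a basis.
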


\begin{proof}
    Via the isomorphism $H(n)\otimes H^\vee(n)\cong \Hom_\bQ(H(n),H(n))$ we have $K_{2,1}(n)\cong H(n)\otimes \Hom_{\bQ}(H(n),H(n))$ and under this identification, the map $\lambda_{1,1}:K_{2,1}(n)\to H(n)$ corresponds to the evaluation map. Since the isomorphism sends $\omega(1)$ to $\id_{H(n)}$ by definition, the first statement follows. The second statement is obtained in the corresponding way, using the isomorphism $H(n)\otimes H^\vee(n)\cong\Hom_{\bQ}(H^\vee(n),H^\vee(n))$.
\end{proof}

\subsubsection{The arithmetic subgroup $\GL_n(\bZ)$} If $\mathbf{G}$ is an algebraic group defined over $\bQ$, recall that an \textit{arithmetic subgroup} is a subgroup $G\subset\mathbf{G}(\bQ)$ which is commensurable to $\mathbf{G}(\bZ)$.  Our main example will be $\GL_n(\bZ)$ which is an arithmetic subgroup of $\GGL_n$. Note that the standard representation $\Aut(F_n)\to\GL(H(n))$ factors through $\GL_n(\bZ)$. 

\begin{remark}
    Sometimes it will also be useful to consider the algebraic subgroup $\mathbf{SL}_n^{\pm}\subset\GGL_n$, defined by the polynomial function $\det(X)^2-1$. Note that $\GL_n(\bZ)$ is an arithmetic subgroup of $\mathbf{SL}_n^\pm$ as well. Furthermore, since $\SL_n(\bZ)$ is Zariski dense in $\SL_n(\bQ)$, it follows that $\GL_n(\bZ)$ is Zariski dense in $\SL_n^\pm(\bQ)$. In contrast $\GL_n(\bZ)$ is \textit{not} Zariski dense in $\GL_n(\bQ)$ (consider again the polynomial function $\det(X)^2-1$).
\end{remark}

\subsubsection{Invariant theory of $\GGL_n$ and $\GL_n(\bZ)$} As above, we write $\lambda:H(n)\otimes H(n)^*\to\bQ$ and $\omega:\bQ\to H(n)\otimes H(n)^*$ for the natural pairing and form, respectively. Since these maps are $\GGL_n(\bQ)$-equivariant, the image of $\omega$ is in particular a $\GGL_n(\bZ)$-invariant tensor and we can use the form to construct tensors of this kind, by applying several insertions, as in the previous subsection. In particular, we get a linear map:
\begin{equation}\label{eq:invariants}
    \bQ\{\text{Bipartite matchings of }S\}\to K_S(n)^{\GL_n(\bZ)},
\end{equation}
defined by $m\mapsto \omega_m(1)$. The map satisfies the following proposition:

\begin{proposition}\label{prop:invariantsofK}
    For any $p,q\ge 0$, the map (\ref{eq:invariants}) is a surjection. Furthermore, it is an isomorphism as long as $n\ge |S|+3$.
\end{proposition}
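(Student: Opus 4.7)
The plan is to factor the map through the subspace of $\GGL_n$-invariants in $K_S(n)$ and apply the first fundamental theorem of invariant theory, together with the Zariski density of $\GL_n(\bZ)$ in $\mathbf{SL}_n^\pm$ noted earlier.

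First I would observe that since $\omega : \bQ \to H(n) \otimes H^\vee(n)$ corresponds to $\id_{H(n)}$ under the canonical isomorphism $H(n) \otimes H^\vee(n) \cong \mathrm{End}_\bQ(H(n))$, it is $\GGL_n$-equivariant, so each insertion $\omega_m(1)$ lies in $K_S(n)^{\GGL_n}$. The map therefore factors as
$$\bQ\{\text{bipartite matchings of }S\} \to K_S(n)^{\GGL_n} \hookrightarrow K_S(n)^{\GL_n(\bZ)}.$$
The classical first fundamental theorem for $\GGL_n$ (cf.\ \cite{FultonHarris}) identifies $K_S(n)^{\GGL_n}$ with the span of complete contractions, which are precisely the vectors $\omega_m(1)$ as $m$ ranges over bipartite matchings of $S$ (with the space being zero when $|S_1| \ne |S_2|$). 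This yields surjectivity of the first arrow.

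To show that the second inclusion is an equality in the relevant range, I would combine the Zariski density of $\GL_n(\bZ)$ in $\mathbf{SL}_n^\pm$ (which gives $K_S(n)^{\GL_n(\bZ)} = K_S(n)^{\mathbf{SL}_n^\pm}$) with the analogous FFT for $\mathbf{SL}_n^\pm$: any invariant not already $\GGL_n$-invariant must involve tensor factors of the volume form $\mathrm{vol} \in \exterior{n}H^\vee(n)$ or its dual $\mathrm{vol}^* \in \exterior{n}H(n)$, each of which occupies $n$ tensor slots, so these cannot fit inside $K_S(n)$ once $n$ is large enough relative to $|S|$. For the injectivity when $n \ge |S|+3$, I would invoke Proposition \ref{prop:decomp2}: since $K^\circ_\varnothing(n) = \bQ$ and the decomposition there is direct in this range, the summand indexed by $I = S$ is exactly $\bigoplus_{m \in \rM(S)} \bQ\cdot\omega_m(1)$, and the directness of the decomposition forces the $\omega_m(1)$ to be linearly independent.

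The main obstacle will be the second step: for small $n$ there really are $\GL_n(\bZ)$-invariants that are not in the span of bipartite matchings — for instance $\mathrm{vol}^* \otimes \mathrm{vol}^* \in K_{([2n],\varnothing)}(n)$ is $\GL_n(\bZ)$-invariant since $\det(g)^2 = 1$, while the set of bipartite matchings of $([2n],\varnothing)$ is empty — so one must argue carefully that such contributions are ruled out in the stated stable range by bounding the number of tensor slots available for volume forms.
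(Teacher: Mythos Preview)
Your approach is essentially the same as the paper's: first fundamental theorem plus Zariski density of $\GL_n(\bZ)$ in $\mathbf{SL}_n^\pm(\bQ)$. Two points of comparison are worth noting.

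For the isomorphism statement, the paper does not use Proposition~\ref{prop:decomp2} but instead invokes \cite[Theorem A]{Lindell2022stable} (Theorem~\ref{thm:twistedcoeffs} here) in degree $0$, observing that $H^0(\Aut(F_n),K_{p,q}(n))=K_{p,q}(n)^{\GL_n(\bZ)}$ and that $\PP(p,q)$ in degree $0$ is exactly the space of bipartite matchings; this is where the range $n\ge |S|+3$ comes from. Your route through Proposition~\ref{prop:decomp2} is cleaner and more self-contained, and in fact gives the slightly better range $n\ge |S|$ for injectivity.

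For surjectivity, you are more careful than the paper. The paper argues that a single determinant is the sign representation of $\mathbf{SL}_n^\pm(\bQ)$ and hence not invariant, concluding that the $\omega_m(1)$ span all $\mathbf{SL}_n^\pm(\bQ)$-invariants; but this overlooks products of an even number of determinant factors, and indeed your example $\mathrm{vol}^*\otimes\mathrm{vol}^*\in K_{([2n],\varnothing)}(n)^{\GL_n(\bZ)}$ shows that the first sentence of the proposition is literally false without a bound on $n$. Your argument establishes surjectivity only once $n$ is large enough that no pair of determinant factors fits, which is all that is actually used elsewhere in the paper. So your proposal proves what is needed, and you have correctly identified a small gap in the paper's claimed generality.
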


\begin{proof}
    For surjectivity, we use the first fundamental theorem of invariant theory for $\mathbf{SL}_n(\bQ)$ (see for example \cite[Chapter 11.1.2, Theorem 3]{ProcesiLieGroups}\footnote{Here, the theorem is stated for the field $\bC$, but by the reasoning at the beginning of the same chapter, the results are also valid over $\bQ$.}), which can be stated as saying that the the $\mathbf{SL}_n(\bQ)$-invariants are generated by elements of the form $\omega_m(1)$, and possibly by determinant representations in $H(n)^{\otimes S_1}$ or $H^\vee(n)^{\otimes S_2}$. However, since the determinant representation of $\mathbf{SL}_n^\pm(\bQ)$ is the sign representation, it follows that the elements $\omega_m(1)$ generate the $\mathbf{SL}_n^\pm(\bQ)$-invariants. Since $\GL_n(\bZ)$ is Zariski dense in $\mathbf{SL}_n^\pm(\bQ)$, it follows that the map (\ref{eq:invariants}) is indeed surjective.
    
    To prove the second statement, it is possible to use the second fundamental theorem of invariant theory, but we can also simply use that the homomorphism $\Aut(F_n)\to\GL_n(\bZ)$ is surjective, so the proposition follows from \cite[Theorem A]{Lindell2022stable}, which is the same as Theorem \ref{thm:twistedcoeffs} below.
\end{proof}

\subsection{Borel vanishing.} As we have already noted, the first main input we will use to compute stable cohomology of $\IA_n$ is the stable cohomology of $\Aut(F_n)$ with the twisted coefficients $K_S(n)$. The second main input is the stable cohomology of $\GL_n(\bZ)$ with twisted coefficients, which is known for a large class of coefficients due to work of Borel. 

\begin{definition}
    If $\mathbf{G}$ is an algebraic group, $G$ is an arithmetic subgroup and $V$ is a finite dimensional $\bQ$-representation of $G$, then $V$ is \textit{algebraic} if there is a morphism of algebraic groups $\mathbf{G}\to\GGL(V)$ such that taking $\bQ$-points and restricting to $G$ yields the representation $G\to\GGL(V)$. We say that a representation $V$ of $G$ is \textit{almost algebraic} if there is a finite index subgroup $G'\subseteq G$ such that the restriction $G'\to\GGL(V)$ is algebraic.
\end{definition}

\begin{remark}\label{rmk:almostalgebraic}
    If $\rho:\mathbf{SL}_n^\pm\to\GGL(V)$ is an irreducible algebraic representation, it lifts to an algebraic representation of $\GGL_n$ in a unique way (see \cite[§15.5]{FultonHarris}). For $V$ a finite dimensional of $\GL_n(\bZ)$, it thus follows that the notion of $V$ being almost algebraic does not depend on whether we consider $\GL_n(\bZ)$ as an arithmetic subgroup of $\GGL_n$ of of $\mathbf{SL}_n^{\pm}$.
\end{remark}
\

\noindent The main reason to consider almost algebraic representations is the following theorem:

\begin{theorem}[{\cite[Page 109]{SerreArithmeticGroups}}]
    If $\mathbf{G}$ is a simple algebraic group of $\bQ$-rank $\ge 2$ defined over $\bQ$, $G$ is an arithmetic subgroup of $\mathbf{G}$ and $V$ is a finite-dimensional representation of $G$, then $V$ is almost algebraic.
\end{theorem}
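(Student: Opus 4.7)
The plan is to deduce this from Margulis super-rigidity, which applies precisely in the higher rank setting hypothesized here. Super-rigidity asserts, roughly, that any finite-dimensional linear representation of an arithmetic subgroup of a simple algebraic $\bQ$-group of $\bQ$-rank $\geq 2$ agrees, after restriction to a suitable finite index subgroup, with the restriction of a morphism of algebraic groups from the ambient group into $\GGL(V)$. This is exactly the statement we want, modulo some bookkeeping about commensurability.

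First, I would reduce to a standard model. Since $G$ is commensurable with $\mathbf{G}(\bZ)$, there is a subgroup $G_0 \subseteq G \cap \mathbf{G}(\bZ)$ of finite index in both. The property of being almost algebraic is invariant under commensurability—it only requires the existence of \emph{some} finite index subgroup on which the representation is algebraic—so it suffices to verify it for $V|_{G_0}$. One may further shrink $G_0$ to a torsion-free principal congruence subgroup using Selberg's lemma, which kills any finite-image ambiguity appearing in Margulis's statement.

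Next, I would apply Margulis's theorem to the representation $\rho : G_0 \to \GGL(V)(\bQ)$. Under the hypothesis that $\mathbf{G}$ is simple of $\bQ$-rank $\geq 2$, super-rigidity produces, after possibly passing to a further finite index subgroup $G_0' \subseteq G_0$, a morphism of algebraic groups $\tilde\rho : \mathbf{G} \to \GGL(V)$ whose restriction to $G_0'$ coincides with $\rho|_{G_0'}$. Taking $\bQ$-points of $\tilde\rho$ and pulling back gives the algebraic structure on $V|_{G_0'}$, which by the commensurability reduction yields the almost algebraicity of $V$ as a $G$-representation.

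The main obstacle is, unsurprisingly, Margulis super-rigidity itself—this is the substantive input, and is the content of Serre's reference. The remaining steps are formal manipulations with finite index subgroups. The rank hypothesis is essential: it is precisely the regime in which super-rigidity holds, and the conclusion genuinely fails for rank one arithmetic groups, which can admit non-algebraic finite-dimensional representations.
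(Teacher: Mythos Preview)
The paper does not prove this theorem; it is cited as a result from Serre's \emph{Arithmetic Groups} and used as a black box. Your sketch via Margulis super-rigidity is the correct and standard argument underlying that citation, so there is nothing to compare against in the paper itself—your outline is precisely the content behind the reference.
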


\noindent As $\mathbf{SL}_n^\pm$ contains $\mathbf{SL}_n$ as a finite index subgroup and $\mathbf{SL}_n$ is a simple algebraic group, the theorem applies to almost algebraic representations of arithmetic subgroups of $\mathbf{SL}_n^\pm$. In particular finite-dimensional representations of $\GL_n(\bZ)$ are almost algebraic. Considering almost algebraic representations will be useful in the proof of the following theorem, which is a slightly stronger version of the Borel vanishing theorem for $\GGL_n$ (cf. \cite[Theorem 2.3]{KupersRandal-Williams}):

\begin{theorem}[Borel vanishing]\label{thm:BorelVanishing}
    Let $G$ be an arithmetic subgroup of $\GGL_n$ and $V$ be a finite dimensional representation of $G$. For $n\ge *+2$, the maps
    $$H^*(\GGL_\infty,\bQ)\otimes V^{G}\to H^*(G,\bQ)\otimes V^{G}\to H^*(G,V)$$
    are isomorphisms, where $H^*(\GGL_\infty,\bQ)=\exterior{*}\{x_{5},x_9,\ldots\}$.
\end{theorem}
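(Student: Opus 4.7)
The plan is to reduce to Borel's classical vanishing theorem for algebraic representations and then bootstrap to arbitrary finite-dimensional representations using Serre's almost algebraicity theorem recalled just above. The first arrow $H^*(\GGL_\infty,\bQ)\otimes V^G\to H^*(G,\bQ)\otimes V^G$ is nothing but Borel's stable cohomology computation $H^*(G,\bQ)\cong \exterior{*}\{x_5,x_9,\ldots\}$ tensored with $V^G$, so the substantive content lies in the second arrow $H^*(G,\bQ)\otimes V^G\to H^*(G,V)$.

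As a base case, I would quote the classical Borel vanishing theorem: if $W$ is a finite-dimensional \emph{algebraic} representation of $\GGL_n$ and $G$ is an arithmetic subgroup, then the cup product $H^*(G,\bQ)\otimes W^G\to H^*(G,W)$ is an isomorphism in the stated range. (Here we use that $\GL_n(\bZ)$ is Zariski dense in $\mathbf{SL}_n^\pm$, so $W^G = W^{\mathbf{SL}_n^\pm}$ for algebraic $W$.) This is the exact analogue of \cite[Theorem 2.3]{KupersRandal-Williams} and the argument is identical up to replacing orthogonal/symplectic groups by $\GGL_n$.

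Next, given a general finite-dimensional $V$, Serre's theorem provides a finite index normal subgroup $G'\triangleleft G$ on which $V$ becomes algebraic. Since we work over $\bQ$ and $|G/G'|$ is invertible, the standard transfer argument yields $H^*(G,W)\cong H^*(G',W)^{G/G'}$ for every $\bQ[G]$-module $W$. Applying this with $W=V$ together with Borel vanishing for the algebraic $G'$-representation $V|_{G'}$ gives
$$H^*(G,V)\;\cong\;H^*(G',V)^{G/G'}\;\cong\;\bigl(H^*(G',\bQ)\otimes V^{G'}\bigr)^{G/G'}.$$

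To finish, I would identify this with $H^*(G,\bQ)\otimes V^G$. By naturality of Borel's computation along $G'\hookrightarrow G$, the restriction $H^*(G,\bQ)\to H^*(G',\bQ)$ is an isomorphism in the stable range (both identify canonically with $\exterior{*}\{x_5,x_9,\ldots\}$), and combined with the transfer identity $H^*(G,\bQ)\cong H^*(G',\bQ)^{G/G'}$ this forces the $G/G'$-action on $H^*(G',\bQ)$ to be trivial stably. The diagonal invariants therefore split as $H^*(G',\bQ)\otimes (V^{G'})^{G/G'}=H^*(G,\bQ)\otimes V^G$, completing the proof. The main obstacle is pinning down cleanly this triviality of the $G/G'$-action on $H^*(G',\bQ)$ in the stable range; beyond the naturality argument just sketched, one can alternatively observe that conjugation by elements of $G$ normalizing $G'$ extends to inner automorphisms of $G$ and so must act as the identity on stable $H^*$ in the range where it coincides with $H^*(G,\bQ)$.
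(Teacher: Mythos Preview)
Your proposal is correct and follows essentially the same argument as the paper: reduce to Borel's theorem for algebraic representations, then for arbitrary finite-dimensional $V$ invoke Serre's almost algebraicity theorem to pass to a finite-index normal subgroup $G'$ on which $V$ is algebraic, apply the transfer isomorphism $H^*(G,-)\cong H^*(G',-)^{G/G'}$, and conclude by showing the $G/G'$-action on $H^*(G',\bQ)$ is trivial in the stable range because both $H^*(G,\bQ)$ and $H^*(G',\bQ)$ coincide with $H^*(\GGL_\infty,\bQ)$. The paper additionally cites Li--Sun for the explicit range $n\ge *+2$, which you do not address, but the structure of the argument is identical.
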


\begin{proof}
    First, for $V$ a finite dimensional and algebraic $G$-representation, this is due to Borel \cite{Borel1}\cite{Borel2}, but with a different range. The improved range is due to Li and Sun \cite[Example 1.10]{LiSun}. For $V$ finite dimensional and thus almost algebraic, we may apply the same transfer argument as in \cite{KupersRandal-Williams}, but let us spell it out in detail for completeness. Let $G'\le G$ be a finite index subgroup of $G$ such that the restriction of $G\to\GGL(V)$ is algebraic. We may assume that $G'$ is a normal subgroup of $G$ by taking the normal core. We then have a commutative diagram
    \[\begin{tikzcd}
        H^*(G,\bQ)\otimes V^{G}\arrow[rr]\arrow[d]&&H^*(G,V)\arrow[d]\\
        H^*(G',\bQ)^{G/G'}\otimes (V^{G'})^{G/G'}\arrow[r,hook]& (H^*(G',\bQ)\otimes V^{G'})^{G/G'}\arrow[r] & H^*(G',V)^{G/G'},
    \end{tikzcd}\]
    where the top horizontal arrow is given by cup product, the first bottom horizontal arrow is the natural inclusion, the second bottom horizontal arrow is induced by the cup product and the vertical arrows are given by transer maps. By \cite[Proposition 10.4]{BrownGroupCohomology}, the vertical arrows are isomorphisms and by the first part of the proof, the second bottom horizontal arrow is as well, in a range of degrees. Furthermore, by the first part of the theorem $H^*(G,\bQ)\cong H^*(G',\bQ)\cong H^*(\GGL_\infty,\bQ)$, so it follows that $G$ acts trivially on $H^*(G',\bQ)$ in the given range of degrees, and thus the first lower horizontal arrows is also an isomorphism in this range. It follows that the top horizontal arrows is also an isomorphism in the given range of degrees.
\end{proof}

As in \cite[Section 2.1.2]{KupersRandal-Williams}, let us point out that in particular, if $V$ is finite dimensional, the theorem implies that for $n\ge 3$, we have $H^1(\GL_n(\bZ),V)=0$, so by the long exact sequence in group cohomology we get that $[-]^{\GL_n(\bZ)}$ is an exact functor on the category of finite dimensional representations. Furthermore, we have that if $V$ and $W$ are finite dimensional then
$$\mathrm{Ext}^1_{\GL_n(\bZ)}(V,W)\cong H^1(\GL_n(\bZ),V^\vee\otimes W)=0,$$
so all extensions of finite dimensional representations split.

\section{The walled Brauer category}\label{sec:walledBrauer} 
Our goal is to study $H^*(\IA_n,\bQ)$ simultaneously as a ring and as a $\GL_n(\bZ)$-representation, which we will do using the categorical approach of \cite{KupersRandal-Williams}. In this section we recall the necessary categorical background from that paper and apply this to the \textit{walled Brauer category}. 

\subsection{Categorical background} We start by briefly summarizing the setup and results of \cite[Section 2.2]{KupersRandal-Williams}. This can be safely skipped by the reader familiar with that paper.

We start from two categories. First, we let $(\cA,\otimes,1_\cA)$ denote a symmetric monoidal $\bQ$-linear abelian category which we assume has all finite enriched colimits. In our applications, this will generally be the category $\Rep(\GL_n(\bZ))$.

Secondly, we let $(\Lambda,\oplus,0_\Lambda)$ be a symmetric monoidal $\bQ$-linear category, where we have a partial order $\le$ on the isomorphism classes of objects, defined by saying that $[x]\le [y]$ whenever $\Hom_\Lambda(x,y)\neq 0$. Furthermore, we will assume that any object $x\in\Lambda$ only has nonzero morphisms to a finite number of objects, up to isomorphism. In our applications, $\Lambda$ will be the \textit{downward walled Brauer category}, which we introduce in the next subsection. 

We will also consider the abelian categories $\cA^\Lambda$ and $(\bQ\dashmod)^\Lambda$, of functors from $\Lambda$ to the respective categories. 

There are two subclasses of objects of these categories that will be of particular interest, but which we can define in any abelian (resp. symmetric monoidal) category:

\begin{definition}
    We say that an object $B$ of an abelian category $\mathcal{B}$ is of \textit{finite length} if there is a finite sequence
$$0\hookrightarrow B_1\hookrightarrow B_2\hookrightarrow\cdots\hookrightarrow B_k=B$$
of monomorphisms, such that at each step, the only quotients of the cokernel $\coker(B_{i+1}\hookrightarrow B_{i})$ are 0 and the cokernel itself. Note that in $\bQ\dashmod$ this just corresponds to being a finite dimensional vector space. We write $\mathcal{B}^f$ for the full subcategory of objects of finite length.
\end{definition}

\begin{remark}\label{rmk:tensoredover}
    The category $\cA$ is tensored over $(\bQ\dashmod)^f$, which means that for $V\in(\bQ\dashmod)^f$ and $A\in\cA$ there is an object $A \tensoredover V\in \cA$. The object $A \tensoredover V$ is characterized by a natural isomorphism $\Hom_\cA(A \tensoredover V,-)\cong \Hom_\bQ(V,\Hom_\cA(A,-))$. In particular, there is a functor 
    $$1_\cA\tensoredover-:(\bQ\dashmod)^f\to\cA^f,$$
    which has a right adjoint $\Hom_\cA(1_\cA,-):\cA^f\to(\bQ\dashmod)^f$. Note that for $\cA=\Rep(G)$, where $G$ is a group, the functor $1_\cA\tensoredover-$ is simply given by considering a vector space as a trivial representation, and the right adjoint is given by taking the $G$-invariants of a $G$-representation.
\end{remark}

\begin{definition}
    We say that an object $C$ of a symmetric monoidal category $\mathcal{C}$ is \textit{dualizable} if there is another object $X^\vee$, which we call a \textit{dual} of $X$, together with morphisms $\epsilon:X\otimes X^\vee\to 1$ and $\eta:1\to X\otimes X^\vee$ satisfying the triangle identities. We write $\mathcal{C}^d$ for the full subcategory of dualizable objects. A dual is necessarily unique up to isomorphism.
\end{definition}

If $K\in(\cA^d)^{\Lambda}$ (in \cite{KupersRandal-Williams}, a functor that has dualizable values is called a \textit{kernel}), we can define a functor $K^\vee:\Lambda^{\mathrm{op}}\to\cA^d$, which for $x\in\Lambda$ given by $K^\vee(x)=K(x)^\vee$ and for $(f:x\to y)\in\Hom_\Lambda(x,y)$ is given by the composition
\[\begin{tikzcd}
    K(y)^\vee\arrow[r,"\id\otimes\eta"]&K(y)^\vee\otimes K(x)\otimes K(x)^\vee\arrow[rr,"\id\otimes K(f)\otimes\id"]&&K(y)^\vee\otimes K(y)\otimes K(x)^\vee\arrow[r,"\epsilon\otimes\id"]&K(x)^\vee.
\end{tikzcd}\]
If $M\in(\cA^\Lambda)^f$, this means that we can define a coend 
\begin{equation}\label{eq:coend}
    K^\vee\otimes^\Lambda M:=\int^{x\in\Lambda}K^\vee(x)\otimes M(x)\in\cA,
\end{equation}
which defines a functor $K^\vee\otimes^\Lambda-:(\cA^\Lambda)^f\to\cA$. In \cite{KupersRandal-Williams}, the existence of this coend is proven using that is may also be expressed as the coequalizer of the diagram
\begin{equation}
    \bigoplus_{x,y\in\Lambda}(K^\vee(y)\otimes M(x)) \tensoredover \Hom_\Lambda(x,y)\rightrightarrows\bigoplus_{x\in\Lambda} K^\vee(x)\otimes M(x),
\end{equation}
which we will use a lot in our applications. There, the functor $K$ will be defined using the representations $K_{S}(n)$ from the previous section and $M$ will be a functor defined using the stable cohomology of $\Aut(F_n)$ with bivariant twisted coefficients. The stable cohomology of $\IA_n$ will be expressed in terms of the coend (\ref{eq:coend}). Since the cohomology is a ring, we will next recall the multiplicativity conditions necessary to endow the coend with such structure.

The categories $(\cA^\Lambda)^f$ and $(\bQ\dashmod^\Lambda)^f$ are equipped with symmetric monoidal structures given by Day convolution, i.e.\ for functors $M,N$ we can define
$$(M\otimes_\Lambda N)(x):=\bigoplus_{a\oplus b=x}M(a)\otimes N(a).$$
In our applications, the functor $K$ will be strong symmetric monoidal and the functor $M$, from the previous paragraph, will be lax symmetric monoidal, i.e.\ a commutative ring object in $\cA^\Lambda$. Thus the ring structure on the coend will follow from the following proposition:
\begin{proposition}[{\cite[Proposition 2.11]{KupersRandal-Williams}}]
    If $K\in(\cA^d)^\Lambda$ is strong symmetric monoidal, so is the functor $K^\vee\otimes^\Lambda-:(\cA^\Lambda)^f\to\cA$. 
\end{proposition}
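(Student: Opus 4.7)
The plan is to verify the two conditions defining a strong symmetric monoidal functor: a coherent binary isomorphism $\phi_{M,N} : (K^\vee\otimes^\Lambda M)\otimes (K^\vee\otimes^\Lambda N) \xrightarrow{\sim} K^\vee\otimes^\Lambda (M\otimes_\Lambda N)$ and a unit isomorphism $\phi_0 : 1_\cA \xrightarrow{\sim} K^\vee\otimes^\Lambda \mathbf{1}_\Lambda$, where $\mathbf{1}_\Lambda$ denotes the unit of the Day convolution on $(\cA^\Lambda)^f$. Both will be constructed by standard coend manipulations, relying on the strong monoidality of $K$ and the usual ``Fubini for coends''.

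For the binary isomorphism I would first unfold the Day convolution as a coend: for $x\in\Lambda$,
\[
(M\otimes_\Lambda N)(x) \;\cong\; \int^{a,b\in\Lambda}\Hom_\Lambda(a\oplus b,x)\tensoredover M(a)\otimes N(b).
\]
Substituting this into the definition of $K^\vee\otimes^\Lambda(M\otimes_\Lambda N)$, swapping the order of the two coends (Fubini), and using the coYoneda lemma to collapse the $x$-coend gives
\[
K^\vee\otimes^\Lambda(M\otimes_\Lambda N) \;\cong\; \int^{a,b\in\Lambda} K^\vee(a\oplus b)\otimes M(a)\otimes N(b).
\]
Strong monoidality of $K$ yields natural isomorphisms $K(a\oplus b)\cong K(a)\otimes K(b)$, and taking duals (together with the fact that in any symmetric monoidal category the dual of a tensor product of dualizable objects is the tensor product of duals) produces natural isomorphisms $K^\vee(a\oplus b)\cong K^\vee(a)\otimes K^\vee(b)$. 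Substituting and splitting the double coend via Fubini once more gives $\phi_{M,N}$ as a composite of isomorphisms.

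For the unit, the Day convolution unit in $(\cA^\Lambda)^f$ is the functor $\mathbf{1}_\Lambda(x)=\Hom_\Lambda(0_\Lambda,x)\tensoredover 1_\cA$, and the coYoneda lemma applied to the coend $\int^{x}K^\vee(x)\otimes \Hom_\Lambda(0_\Lambda,x)\tensoredover 1_\cA$ collapses it to $K^\vee(0_\Lambda)$. Since $K$ is strong monoidal, $K(0_\Lambda)\cong 1_\cA$, so $K^\vee(0_\Lambda)\cong 1_\cA^\vee\cong 1_\cA$, which produces $\phi_0$.

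The step I expect to be the real obstacle is not the construction of $\phi_{M,N}$ and $\phi_0$ but the verification of the \emph{coherence axioms}: associativity, left and right unitality, and the hexagon for symmetry. These all have to be reduced, step by step, to the corresponding coherence diagrams for $K$ (which commute because $K$ is strong symmetric monoidal) together with the coherence of the coend Fubini isomorphisms and of duals in $\cA$. Each such diagram is a large pasting of naturality squares and coYoneda reductions; the bookkeeping is made more delicate by having to track the $\bQ$-linear enrichment via the tensoring $\tensoredover$, and by the fact that the unit object $0_\Lambda$ may have many endomorphisms that one must check act trivially on $K^\vee(0_\Lambda)\cong 1_\cA$. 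With the coend calculus set up as in \cite{KupersRandal-Williams}, however, each coherence diagram commutes by general principles, and the result follows.
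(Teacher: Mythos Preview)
The paper does not give its own proof of this proposition: it is stated with a citation to \cite[Proposition 2.11]{KupersRandal-Williams} and used as a black box. So there is no argument in the paper to compare against; your sketch via Fubini for coends, coYoneda, and the compatibility of duals with tensor products is the standard route and is essentially the argument one finds in the cited source.

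One small point worth tightening: the paper records Day convolution by the formula $(M\otimes_\Lambda N)(x)=\bigoplus_{a\oplus b=x} M(a)\otimes N(b)$ rather than the general coend $\int^{a,b}\Hom_\Lambda(a\oplus b,x)\tensoredover M(a)\otimes N(b)$. In the categories at hand these agree, but since your argument leans on the coend description and the coYoneda collapse, you should either justify that identification in the present setting or phrase the argument directly in terms of the direct-sum formula. Apart from this, and the expected diagram-chasing for coherence that you already flag, the proposal is sound.
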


The last thing we need to recall is a key lemma used to detect isomorphisms in $\cA$. For this we need some more notation. We start by fixing $K\in(\cA^d)^\Lambda$. Now we let $\Delta:\cA\to(\bQ\dashmod)^\Lambda$ be the composite functor
\[\begin{tikzcd}\cA\arrow[rr,"K\otimes -"]&&\cA^\Lambda\arrow[rrr,"\Hom_{\cA}{(1_\cA,-)}"]&&&(\bQ\dashmod)^\Lambda.
\end{tikzcd}\]
Furthermore, we let $\cA_K$ denote the subcategory whose objects are summands of objects of the form $K(x)^\vee$, for any $x\in\Lambda$. Finally, we let $\cA_K^\circ$ be the category of objects $X$ such that $\Hom_\cA(1_\cA,X\otimes K(x))=0$ for any $x\in\Lambda$.

\begin{lemma}[{\cite[Lemma 2.12]{KupersRandal-Williams}}]\label{lemma:testisos}
    Let $f:A\to B$ be a morphism in $\cA$.\begin{enumerate}
        \item If $\Delta(f)$ is injective, then $\ker(f)\in\cA_K^\circ$.
        \item If $A\in\cA_K$, $\mathrm{Ext}_{\cA}^1(K(x)^\vee,K(y)^\vee)=0$ for all $x,y\in\Lambda$ and $\Delta(f)$ is bijective, then $\coker(f)\in\cA_K^\circ$.
    \end{enumerate}
\end{lemma}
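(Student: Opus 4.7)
The plan is to combine two structural facts. First, $-\otimes K(x)$ is exact for every $x\in\Lambda$, since dualizability of $K(x)$ realises this functor as both a left and a right adjoint to $-\otimes K(x)^\vee$. Second, the duality adjunction gives a natural identification $\Hom_{\cA}(1_{\cA}, X\otimes K(x))\cong \Hom_{\cA}(K(x)^\vee, X)$, together with its derived analogue on Ext-groups, so the condition $X\in\cA_{K}^{\circ}$ becomes simply $\Hom_{\cA}(K(x)^\vee, X)=0$ for every $x$.

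For part (1), set $N=\ker(f)$. Tensoring the exact sequence $0\to N\to A\to B$ with $K(x)$ preserves exactness, and applying the left-exact functor $\Hom_{\cA}(1_{\cA},-)$ yields
\[
0\to\Hom_{\cA}(K(x)^\vee, N)\to\Hom_{\cA}(K(x)^\vee, A)\xrightarrow{\Delta(f)_x}\Hom_{\cA}(K(x)^\vee, B).
\]
Injectivity of $\Delta(f)_x$ forces the first term to vanish for every $x$, and hence $N\in\cA_K^{\circ}$.

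For part (2), I would factor $f$ as $A\twoheadrightarrow I\hookrightarrow B$, with $I=\mathrm{im}(f)$, and compare the two long exact $\mathrm{Ext}^{*}_{\cA}(K(x)^\vee,-)$-sequences attached to the short exact sequences $0\to\ker f\to A\to I\to 0$ and $0\to I\to B\to\coker f\to 0$. Since $A$ is a summand of some $K(y)^\vee$, the hypothesis yields $\mathrm{Ext}^1_{\cA}(K(x)^\vee, A)=0$; combined with part (1) and the bijectivity of $\Delta(f)_x$, the first LES forces $\mathrm{Ext}^1_{\cA}(K(x)^\vee,\ker f)=0$ and an isomorphism $\Hom_{\cA}(K(x)^\vee, A)\cong\Hom_{\cA}(K(x)^\vee, I)$. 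Feeding this into the LES of the second SES identifies $\Hom_{\cA}(K(x)^\vee, I)\cong\Hom_{\cA}(K(x)^\vee, B)$ and produces an injection $\Hom_{\cA}(K(x)^\vee,\coker f)\hookrightarrow \mathrm{Ext}^1_{\cA}(K(x)^\vee, I)$.

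The main obstacle is showing that this last injection has zero target. Pushing the first LES one degree further embeds $\mathrm{Ext}^1_{\cA}(K(x)^\vee, I)$ into $\mathrm{Ext}^2_{\cA}(K(x)^\vee,\ker f)$, and closing the argument requires identifying the image of $\Hom_{\cA}(K(x)^\vee,\coker f)$ in this target as coming from a Yoneda-split 4-term sequence; this step is the delicate one, and should follow by using the dualizability of $K(x)$ together with the full Ext-vanishing hypothesis to untwist the extensions involved.
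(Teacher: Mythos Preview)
The paper does not give its own proof of this lemma; it is quoted from \cite[Lemma~2.12]{KupersRandal-Williams}, so there is nothing in the paper to compare against and the question is simply whether your argument stands on its own.

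Part (1) is correct and is the standard argument.

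Part (2) is not complete. Everything up to and including the chain of injections
\[
\Hom_{\cA}(K(x)^\vee,\coker f)\;\hookrightarrow\; \mathrm{Ext}^1_{\cA}(K(x)^\vee,I)\;\hookrightarrow\; \mathrm{Ext}^2_{\cA}(K(x)^\vee,\ker f)
\]
is fine, as is your derivation of $\mathrm{Ext}^1_{\cA}(K(x)^\vee,\ker f)=0$ along the way. But your last paragraph is a hope, not an argument. The image of a class $\varphi\in\Hom_{\cA}(K(x)^\vee,\coker f)$ under this composite is the Yoneda product of $[0\to\ker f\to A\to I\to 0]\in\mathrm{Ext}^1_{\cA}(I,\ker f)$ with the pulled-back extension $[0\to I\to E\to K(x)^\vee\to 0]\in\mathrm{Ext}^1_{\cA}(K(x)^\vee,I)$. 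The only leverage the hypothesis $\mathrm{Ext}^1_{\cA}(K(x)^\vee,A)=0$ gives on such a product is that it vanishes exactly when the second factor lies in the image of $\mathrm{Ext}^1_{\cA}(K(x)^\vee,A)\to\mathrm{Ext}^1_{\cA}(K(x)^\vee,I)$, i.e.\ when that factor is already zero --- which is precisely the statement you are trying to prove. Neither dualizability of $K(x)$ nor the assumed $\mathrm{Ext}^1$-vanishing among the $K(y)^\vee$ gives any control over $\mathrm{Ext}^2_{\cA}(K(x)^\vee,\ker f)$ in this generality, so the proposed mechanism for ``untwisting the extensions'' is circular as stated. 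You should consult the actual argument in \cite{KupersRandal-Williams} rather than leave this step unresolved.
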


\subsection{The walled Brauer category}

Let $G=\GL_n(\bZ)$ and $\cA:=\Rep(G)$ be the category of representations of $G$. We shall assume that $n\ge 3$, so that $[-]^G$ is an exact functor and and all extensions split.

\begin{definition}
    The \textit{walled Brauer category} $(\wBr_d,\oplus,0)$ of charge $d\in\bQ$ is the symmetric monoidal $\bQ$-linear category defined as follows:\begin{itemize}
        \item The objects of $\wBr_d$ are pairs $S=(S_1,S_2)$ of finite sets.
        \item The space of morphisms $\wBr_d(S,T)$ is the $\bQ$-vector space with basis given by quadruples $(f,g,m_{S},m_{T})$, where $f$ is a bijection from a subset $S'_1\subset S$ to a subset $T'_1\subset T_1$, $g$ is a bijection from a subset $S'_2\subset S_2$ to a subset $T'_2\subset T_2$, $m_{S}$ is a bipartite matching of $(S_1\setminus S'_1, S_2\setminus S'_2)$
        and $M_{T}$ is a bipartite matching of $(T_1\setminus T'_1, T_2\setminus T'_2)$. Note that this means that a morphism in $\wBr_d(S,T)$ corresponds precisely to a bipartite matching of $(S_1\sqcup T_2, T_1\sqcup S_2)$.
        The data of a morphism can be recorded graphically as illustrated in Figure \ref{fig:wBr}. 
\begin{figure}[h]
    \centering
    \includegraphics[scale=0.55]{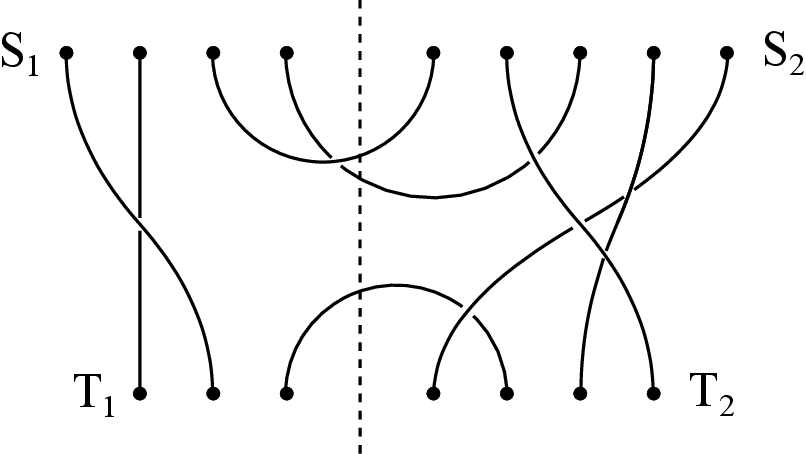}
    \caption{Illustratation of a morphism in the walled Brauer category. Note that the order of crossings is not part of the data and does not matter for the morphism, but is only included to make the picture easier to read.}
    \label{fig:wBr}
\end{figure}
\item The composition $\wBr_d(S,T)\otimes\wBr_d(T,U)\to\wBr_d(S,U)$ is by defined concatenating pictures like that in Figure \ref{fig:wBr} vertically, removing closed components and multiplying by $d$, raised to the power of the number of closed components removed.
\begin{figure}[h]
    \centering
    \includegraphics[scale=0.48]{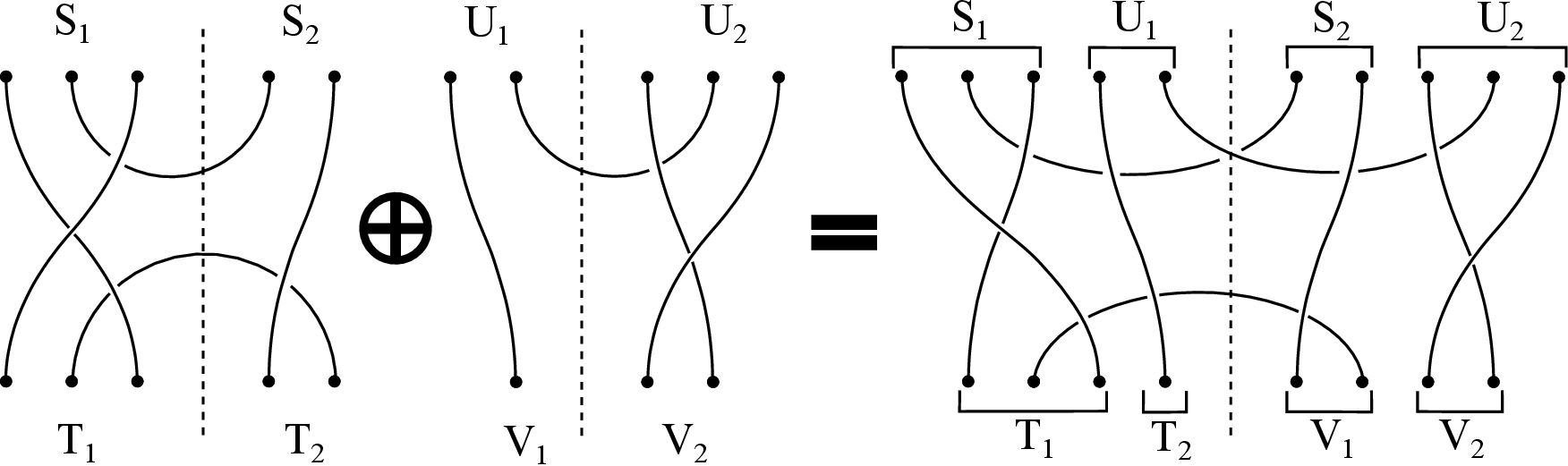}
    \caption{The monoidal product of two morphisms in $\wBr_n$.}
    \label{fig:wBrM}
\end{figure}
\item The monoidal product is given on objects by $(S_1,S_2)\oplus(T_1,T_2):=(S_1\sqcup T_1,S_2\sqcup T_2)$ and on morphisms by concatenating diagrams like that in Figure \ref{fig:wBr} horizontally, with the obvious symmetry. The monoidal unit $0$ is the pair $(\varnothing,\varnothing)$. 
    \end{itemize}  
\end{definition}

As in \cite{KupersRandal-Williams}, will be particularly interested in the following subcategory of $\wBr_d$:

\begin{definition}
    The \textit{downward walled Brauer category} $\dwBr\subset \wBr_d$ is the subcategory with the same objects as $\wBr_d$, but only with 
    $$\Hom_{\dwBr}(S,T)=\bQ\{(f,g,m_S,\varnothing)\in\Hom_{\wBr_n}(S,T)\}.$$
     Note that this means that no closed components can appear when composing morhpisms, so this subcategory is independent of the charge $d$. We will write $i:\dwBr\to\wBr_d$ for the inclusion functor.
\end{definition}

\subsubsection{Left Kan extensions along the inclusion functor}\label{subsec:leftKanextension} For $\cC$ a symmetric monoidal $\bQ$-linear abelian category (either $\cA$ or $\bQ\dashmod$), we will several times consider the functor $i_*:\cC^{\dwBr}\to\cC^{\wBr_n}$ given by left Kan extension along the inclusion functor $i$, so let us described this functor in some detail. Let $A:\dwBr\to\cC$ be a functor. The functor $i_*A:\wBr_n\to\cC$ may then be expressed using the coend
\begin{equation}
    i_*A:=\int^{T\in\dwBr}\Hom_{\wBr_n}(T,-)\tensoredover A(T)\in\cC^{\wBr_n},
\end{equation}
which may also be expressed as the coequalizer of the diagram
$$\bigoplus_{(U\to T)\in\dwBr}\Hom_{\wBr_n}(T,-)\tensoredover A(U)\rightrightarrows\bigoplus_{T\in\dwBr}\Hom_{\wBr_n}(T,-)\tensoredover A(T).$$
We note that any morphism $(f,g,m_T,m_S)\in\Hom_{\wBr_n}(T,S)$ can be written as a composition
$$(\id_{S'_1},\id_{S'_2}, \varnothing, m_S)\circ(f,g,m_T,\varnothing),$$
and since the first map in the composition is in $\dwBr(T,S')$, it follows that for $S\in\wBr_n$, we have
\begin{equation}\label{eq:KanFormula}
    i_*(A)(S)\cong\bigoplus_{I\subseteq S}\Hom_{\wBr_n}(\varnothing,S\setminus I)\tensoredover A(I)
\end{equation}
Note that $\Hom_{\wBr_n}(\varnothing, S\setminus I)=\bQ M(I)$, so for brevity, we denote a basis element of $\Hom_{\wBr_n}(\varnothing, S\setminus I)$ by $m_{S\setminus I}$ instead of $(\varnothing,\varnothing,\varnothing,m_{S\setminus I})$. To describe the functor $i_*(A)$ on morphisms, we consider two main cases:\begin{enumerate}[(1)]
    \item First, consider a morphism of the form $F=(f,g,\varnothing,m_T):S\to T$, i.e.\ where $f:S_1\to T_1$ and $g:S_2\to T_2$ are injections and $m_T$ is a bipartite matching of $(T_1\setminus f(S_1),T_2\setminus g(S_2))$. For $I\subseteq S$, let $J:=(f(I_1),g(I_2))\subseteq T$. Since $m_T$ is a (possibly empty) bipartite matching of $T\setminus J$, the morphism $F$ restricts to morphisms $F_I=(f|_{I_1},g|_{I_2},\varnothing,\varnothing)\in\Hom_{\wBr_n}(I,J)$ and $F_{S\setminus I}=(f|_{S_1\setminus I_1},g|_{S_2\setminus I_2},\varnothing,m_T)\in\Hom_{\wBr_n}(S\setminus I,T\setminus J)$. Note that $F_I$ is a morphism in $\Hom_{\dwBr}(I,J)$ and we have
    $$i_*(A)(f,g,\varnothing,m_T)=\bigoplus_{I\subseteq S}\Hom_{\wBr_n}(\varnothing,F_{S\setminus I})\otimes A(F_I).$$
    \item Secondly, for $S'=(\{s_1\},\{s_2\})\subseteq S$, let $S'':=S\setminus S'$ and let us consider a morphism of the form 
    $$F=(\id_{S_1''},\id_{S_2''},(s_1,s_2),\varnothing)\in\Hom_{\wBr_n}(S,S'').$$
    Now consider $I\subseteq S$. Here we must consider four different cases:\begin{enumerate}[(a)]
        \item First, suppose $S'\subseteq I$. In this case, the morphism $F$ restricts to the identity on $S\setminus I$ and on $I$ to the morphism $F':=(\id_{I_1\setminus\{s_1\}},\id_{I_2\setminus\{s_2\}},(s_1,s_2),\varnothing)$, which is a morphism in $\Hom_{\dwBr}(I,I\setminus S')$, so on the summand of $i_*(A)(S)$ corresponding to $I$, $i_*(A)(F)$ is given by 
        $$\Hom_{\wBr_n}(\varnothing,\id_{S\setminus I})\otimes A(F').$$
        \item Next, suppose $S'\subset S\setminus I$. Similarly as in the previous case, the morphism $F$ restricts to the identity on $I$ an on $S\setminus I$ to the morphism $F':=(\id_{S_1\setminus\{s_1\}},\id_{S_2\setminus\{s_2\}},(s_1,s_2),\varnothing)$. On the summand of $i_*(A)(S)$ corresponding to $I$, $i_*(A)(F)$ is thus given by
        $$\Hom_{\wBr_n}(\varnothing,F')\otimes A(\id_I).$$
        In this case, we can note that since $S'\subset S\setminus I$, the morphism $\Hom_{\wBr_n}(\varnothing,F')$ is explicitly given by removing $(s_1,s_2)$ from the matching of the morphism in $\Hom_{\wBr_n}(\varnothing, S\setminus I)$ and multiplying by $n$.
        \item Next, suppose $s_1\in I_1$ and $s_2\in S_2\setminus I_2$. This case is a bit more complicated and we cannot use the functoriality of the tensor factors of each summand directly. Instead, we describe the morphism for each basis element of $\Hom_{\wBr_n}(\varnothing, S\setminus I)$ separately. For $m_{S\setminus I}\in\Hom_{\wBr_n}(\varnothing,S\setminus I)$, let $s_1'\in S_1\setminus I_1$ be such that $(s_1',s_2)\in m_{S\setminus I}$. We define $I'=(I_1',I_2')\subseteq S$ by $I_1':=(I_1\setminus\{s_1\})\sqcup\{s_1'\}$ and $I_2':=I_2$. We let $F'=(f',\id_{I_2},\varnothing,\varnothing):I\to I'$ be the morphism where
        \begin{align*}
            f'(s)=\begin{cases}s_1'&\text{ if }s=s_1,\\ s&\text{ otherwise}.\end{cases}
        \end{align*}
        Similarly, we have a linear map $T:\bQ\{m_{S\setminus I}\}\to\bQ\{m_{S''\setminus I'}\}$ given by $m_{S\setminus I}\mapsto m_{S\setminus I}\setminus(s_1',s_2)$. On the summand $\bQ\{m_{S\setminus I}\}\tensoredover A(I)$ the morphism $i_*(A)(F)$ is then given by
        $$i_*(A)(F)=T\otimes A(F')$$
        
        \item Finally, the case $s_1\in S_1\setminus I_1$ and $s_2\in I_2$ is determined in the same way as the previous case.
    \end{enumerate}
\end{enumerate}
Any morphism in $\wBr_n$ is a composition of a number of morphisms of the forms (1) and (2), so this completely describes the functor.

\begin{example}
    Now let us apply this description to an explicit example, which will be of particular interest. For $n\ge 2$, we define a functor $K:\wBr_n\to\Rep(G)$, given on objects by $K(S)=K_{S}(n)$ and for $(f,g,m_S,m_T):S\to T$,  $K(f,g,m_S,m_T)$ is the composition
\begin{align*}
   K_{S}(n)\xrightarrow{\lambda_{m_S}}K_{S'}(n)\xrightarrow{f\otimes g}  K_{T'}(n) \xrightarrow{\omega_{m_T}}K_{T}(n),
\end{align*}
where we by abuse of notation also use $f$ and $g$ to denote the maps given by permuting the tensor factors according to these bijections. 
\end{example} 

We also define a functor $K^\circ:\dwBr\to\Rep(G)$ which on objects is given by $K(S)=K_{S}^\circ(n)$ and on morphisms is defined just as $K$. We then have the following proposition, which is essentially a direct consequence of Proposition \ref{prop:decomp2}:

\begin{proposition}\label{prop:KanextensionK}
    There is a natural transformation $\psi:i_*K^\circ\to K$,  of functors $\wBr_n\to\Rep(G)$, where $\psi_S$ is surjective for all $S\in\wBr_n$ and an isomorphism if $n\ge |S|$.
\end{proposition}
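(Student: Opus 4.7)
The plan is to construct $\psi$ through the universal property of the left Kan extension. By that universal property, giving a natural transformation $\psi : i_*K^\circ \to K$ of functors $\wBr_n \to \Rep(G)$ is the same as giving a natural transformation $\tilde\psi : K^\circ \to i^*K$ of functors $\dwBr \to \Rep(G)$, and we take $\tilde\psi_I : K_I^\circ(n) \hookrightarrow K_I(n)$ to be the subspace inclusion. It then suffices to (i) verify naturality of $\tilde\psi$, (ii) unfold the explicit form of $\psi_S$ via the coend formula (\ref{eq:KanFormula}), and (iii) deduce both statements from Proposition \ref{prop:decomp2}.

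Naturality reduces to two cases, since a morphism in $\dwBr$ has the form $(f,g,m_S,\varnothing) : S \to T$ and is sent by $K$ to $(f\otimes g)\circ \lambda_{m_S}$ (no insertion occurs on the target side). If $m_S=\varnothing$, the morphism acts purely by permutation of tensor factors, which manifestly preserves the subspace $K_S^\circ(n)$. If $m_S\neq\varnothing$, then for any $v\in K_S^\circ(n)$ the defining vanishing $\lambda_{i,j}(v)=0$ forces $\lambda_{m_S}(v)=0$, so both compositions in the naturality square are zero. (As a byproduct, this also confirms that $K^\circ$ genuinely defines a functor on $\dwBr$.)

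Unfolding the universal property through the coend description (\ref{eq:KanFormula}), the resulting $\psi_S$ sends a basis element $m_{S\setminus I}\otimes v \in \bQ\{m_{S\setminus I}\}\tensoredover K_I^\circ(n)$ to $\omega_{m_{S\setminus I}}(v)\in K_S(n)$. Both claims are now immediate from Proposition \ref{prop:decomp2}. Its first part identifies $K_S(n)$ with the sum of the images $\omega_m(K_{S\setminus I}^\circ(n))$ over all $I\subseteq S$ and matchings $m\in\rM(S\setminus I)$, which is precisely the image of $\psi_S$, giving surjectivity. For $n\ge |S|$, the same proposition asserts that this sum is internal, and combined with the injectivity of each individual $\omega_m$ (which amounts to tensoring with the nonzero element $\omega(1)^{\otimes k}$ in the matched positions) this forces $\psi_S$ to be injective, hence an isomorphism. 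The only mildly delicate ingredient is the naturality check on $\dwBr$, but as shown above it collapses to the two easy cases.
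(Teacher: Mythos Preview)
Your proof is correct and in fact slightly cleaner than the paper's. Both arguments arrive at the same explicit description of $\psi_S$ (sending $m_{S\setminus I}\otimes v$ to $\omega_{m_{S\setminus I}}(v)$) and both conclude surjectivity and the isomorphism property from Proposition~\ref{prop:decomp2}. The difference lies in how naturality is established. The paper defines $\psi_S$ directly on the summands of the decomposition~(\ref{eq:KanFormula}) and then verifies naturality on $\wBr_n$ by hand, running through the case analysis of Section~\ref{subsec:leftKanextension}; the cases (2b)--(2d) there require the identity $\lambda(\omega(1))=n$ and Lemma~\ref{lemma:contractioninsertion} to track how contractions interact with previously inserted copies of $\omega(1)$. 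You instead invoke the $i_*\dashv i^*$ adjunction, so that naturality need only be checked for $\tilde\psi$ on $\dwBr$, where there are no target-side insertions and the verification collapses to the observation that contractions annihilate $K^\circ$. This buys you a shorter argument that sidesteps Lemma~\ref{lemma:contractioninsertion} entirely; the paper's approach, on the other hand, makes the compatibility with the full $\wBr_n$-structure completely explicit, which is perhaps pedagogically useful given how often that case analysis is reused later. Your remark that each $\omega_m$ is individually injective is also a worthwhile addition, as this is needed to pass from ``the images are in direct sum'' (which is what Proposition~\ref{prop:decomp2} literally says) to injectivity of $\psi_S$.
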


\begin{proof}
    First, we note that for $S\in\wBr_n$ and any $I\subseteq S$, we can define a $G$-equivariant map $\omega_{(-)}(1):\Hom(\varnothing,S\setminus I)\to K(S\setminus I)^G$ by $m\mapsto \omega_m(1)$. Using (\ref{eq:KanFormula}) for $i_*K^\circ$, we can thus for any $S\in\wBr_n$ define a $G$-equivariant map  
    $$\psi_S:\bigoplus_{I\subseteq S}\omega_{(-)}(1)\otimes\mathrm{inc}_I:i_*(K^\circ)(S)\to K(S),$$
    where $\mathrm{inc}_I:K^\circ(I)\to K(I)$ denotes the inclusion, for any $I\subseteq S$. Using the above description of the functor $i_*K^\circ$ on morphisms, it is a straightforward verification that the maps $\psi_S$ define a natural transformation. For the morphisms in (2b), one uses the fact that $\lambda(\omega(1))=n$ and for (2c) and (2d) one uses Lemma \ref{lemma:contractioninsertion}. It follows by Proposition \ref{prop:decomp2} that $\psi_S$ is surjective for any $S$ and an isomorphism as long as $n\ge |S|$. 
\end{proof}

By dualizing the values on objects and on morphisms, we also get a functor $K^\vee:\wBr_n^{\mathrm{op}}\to \Rep(G)$. Our main results will be proven using the following proposition, which is analogous to \cite[Proposition 2.16]{KupersRandal-Williams}. In what follows, $i^*:\cA^{\wBr_n}\to\cA^{\dwBr}$ denotes the restriction functor, i.e.\ the functor given by precomposing a functor with the inclusion functor $i$.

\begin{proposition}\label{prop:wBr1}
    Let $A\in (\bQ\text{-}\mathrm{mod})^{\dwBr}$ be a functor of finite length, and $B$ be a representation of $G$ (not necessarily finite dimensional). Given a natural isomorphism
    $$\phi^{\wBr_n}:i_*(A)\to[K\otimes B]^G,$$
    of functors in $(\bQ\dashmod)^{\wBr_n}$, there is an induced map
    $$\phi:i^*(K^\vee)\otimes^{\dwBr}(1_{\Rep(G)} \tensoredover A)\to B,$$
    of $G$-representations, which is an isomorphism onto the maximal algebraic subrepresentation of $B$. Furthermore, for any bipartition $(\lambda,\mu)$ of $(p,q)$ and for $n\gg p+q$, the multiplicity of the irreducible $G$-representation $V_{\lambda,\mu}$ in $B$ is the same as that of the irreducible $\Sigma_p\times\Sigma_q$-representation $S^\lambda\otimes S^\mu$ in $A(p,q)$.
\end{proposition}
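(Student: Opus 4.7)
The plan is to extract $\phi$ from $\phi^{\wBr_n}$ by unwinding two adjunctions and then to apply Lemma~\ref{lemma:testisos} with $\Lambda = \wBr_n$ to identify its image with $B^{\alg}$. By the universal property of the coend, giving a map $i^*(K^\vee) \otimes^{\dwBr}(1_{\Rep(G)} \tensoredover A) \to B$ is equivalent to giving a dinatural family $K^\vee(x) \otimes (1_{\Rep(G)} \tensoredover A(x)) \to B$ for $x \in \dwBr$. Using dualizability of $K(x)$ together with the adjunction $(1_{\Rep(G)} \tensoredover -) \dashv [-]^G$ from Remark~\ref{rmk:tensoredover}, such a family is in turn equivalent to a natural transformation $A \to [i^*K \otimes B]^G$ in $(\bQ\dashmod)^{\dwBr}$, and by the left Kan extension adjunction $i_* \dashv i^*$ this corresponds to a natural transformation $i_*(A) \to [K \otimes B]^G$ in $(\bQ\dashmod)^{\wBr_n}$. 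Letting this latter transformation be $\phi^{\wBr_n}$ defines $\phi$. Since each $K^\vee(x)$ is a finite-dimensional algebraic representation, the source of $\phi$ is a quotient of $\bigoplus_x K^\vee(x) \tensoredover A(x)$, which is algebraic, so the image of $\phi$ sits inside $B^{\alg}$.

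To show that $\phi$ is injective with image exactly $B^{\alg}$, I apply Lemma~\ref{lemma:testisos}. Since $n \ge 3$, Borel vanishing (Theorem~\ref{thm:BorelVanishing}) makes $[-]^G$ exact on finite-dimensional representations and forces $\mathrm{Ext}^1_{\Rep(G)}(K(x)^\vee, K(y)^\vee) = H^1(G, K(x) \otimes K(y)^\vee) = 0$, verifying the hypothesis of Lemma~\ref{lemma:testisos}(2). Exactness of invariants implies that $[K(y) \otimes -]^G$ commutes with the coend over $\dwBr$, and consequently
\[
\Delta(K^\vee \otimes^{\dwBr}(1_{\Rep(G)} \tensoredover A))(y) \cong \int^{x \in \dwBr} [K(y) \otimes K^\vee(x)]^G \otimes A(x).
\]
Proposition~\ref{prop:invariantsofK} identifies $[K(y) \otimes K^\vee(x)]^G$ with $\Hom_{\wBr_n}(x,y)$ whenever $n$ is large relative to $|y|$ and to the bounded support of $A$, and (\ref{eq:KanFormula}) rewrites the right-hand side as $i_*(A)(y)$; under this identification $\Delta(\phi)(y)$ coincides with $\phi^{\wBr_n}(y)$, which is an isomorphism by hypothesis. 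Lemma~\ref{lemma:testisos}(1) then yields $\ker(\phi) \in \cA_K^\circ$. But any nonzero algebraic representation contains some irreducible $V_{\lambda,\mu}$, and $V_{\mu,\lambda} \cong V_{\lambda,\mu}^\vee$ appears as a summand of $K_{|\mu|,|\lambda|}(n)$ by Schur--Weyl (Theorem~\ref{thm:schurweyl}), making $[K_{|\mu|,|\lambda|}(n) \otimes V_{\lambda,\mu}]^G$ nonzero. Hence $\cA_K^\circ$ contains no nonzero algebraic representation, so $\ker(\phi) = 0$. Lemma~\ref{lemma:testisos}(2) analogously gives $\coker(\phi) \in \cA_K^\circ$; since $B^{\alg}/\mathrm{im}(\phi)$ is algebraic and embeds in $\coker(\phi)$, it too must vanish, so $\mathrm{im}(\phi) = B^{\alg}$.

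For the multiplicity statement, I apply $\Hom_G(V_{\lambda,\mu}, -)$ to the isomorphism onto $B^{\alg}$. Because $V_{\lambda,\mu}$ is finite-dimensional algebraic and $[-]^G$ is exact, this functor commutes with the coend, and the multiplicity of $V_{\lambda,\mu}$ in $B$ equals the dimension of the $V_{\lambda,\mu}$-isotypic component of $K^\vee \otimes^{\dwBr}(1_{\Rep(G)} \tensoredover A)$. Schur--Weyl duality (Theorem~\ref{thm:schurweyl}) shows that $\Hom_G(V_{\lambda,\mu}, K^\vee(x))$ is nonzero precisely for $x$ obtained by iterated insertions from $([|\mu|],[|\lambda|])$; the dinatural relations of the coend (identifying insertions on the $K^\vee$-side with contractions on the $A$-side) collapse these contributions to the leading term, and a further application of Schur--Weyl identifies the resulting coinvariant space with the multiplicity space of $S^\lambda \otimes S^\mu$ in $A(p,q)$. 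The hardest step is this last identification, which requires careful tracking of the $\Sigma_p \times \Sigma_q$-equivariance through Schur--Weyl duality and the coend; the remaining parts are essentially formal consequences of Borel vanishing and invariant theory.
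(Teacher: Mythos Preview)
Your construction of $\phi$ via the two adjunctions is correct and matches the paper, as does the observation that the source is algebraic. However, there is a genuine gap in the step where you argue that $\Delta(\phi)$ is a natural isomorphism. You invoke Proposition~\ref{prop:invariantsofK} to identify $[K(y)\otimes K^\vee(x)]^G$ with $\Hom_{\wBr_n}(x,y)$, but that identification is only an isomorphism when $n \ge |x|+|y|+3$. Since $n$ is fixed and $y$ ranges over all of $\dwBr$, your argument only shows $\Delta(\phi)(y)$ is an isomorphism for $|y|$ bounded in terms of $n$; Lemma~\ref{lemma:testisos} requires bijectivity at \emph{every} object. (Relatedly, taking $\Lambda=\wBr_n$ is problematic: the finiteness hypothesis on $\Lambda$ in the setup of Section~\ref{sec:walledBrauer} fails there. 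The paper works with $\Lambda=\dwBr$.)

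The paper closes this gap with a different idea: rather than identifying $[K(y)\otimes K^\vee(x)]^G$ with $\Hom_{\wBr_n}(x,y)$, one uses only the \emph{surjection} $\kappa_{x,y}:\Hom_{\wBr_n}(x,y)\twoheadrightarrow [K(y)\otimes K^\vee(x)]^G$ from Proposition~\ref{prop:invariantsofK}, which holds for all $x,y$ without restriction. Passing to the coend gives a surjection $\kappa: i_*(A)(y)\twoheadrightarrow \int^{x}[K(y)\otimes K^\vee(x)]^G\otimes A(x)$, and unwinding the adjunctions shows $\phi^{\wBr_n}_y = \Delta(\phi)_y\circ\kappa$. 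Since $\phi^{\wBr_n}_y$ is an isomorphism by hypothesis and $\kappa$ is surjective, $\kappa$ is forced to be injective and hence $\Delta(\phi)_y$ is an isomorphism for every $y$.

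For the multiplicity statement, your approach of computing $\Hom_G(V_{\lambda,\mu},-)$ directly on the coend is in principle workable, but the sentence about ``collapsing contributions to the leading term'' hides exactly the nontrivial content. The paper instead uses Proposition~\ref{prop:KanextensionK}: for $n\gg |S|$ one has $i_*K^\circ(S)\cong K(S)$, so $i_*[K^\circ\otimes B]^G(S)\cong [K\otimes B]^G(S)\cong i_*(A)(S)$, which yields $A(p,q)\cong [K^\circ_{p,q}(n)\otimes B]^G$ directly. Schur--Weyl duality (Theorem~\ref{thm:schurweyl}) then gives the multiplicity comparison in one line.
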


\begin{proof}
    First, let us explain why we do not need to add any assumption on the dimension of the representation $B$, following \cite{ErratumKRW}. For any $S\in\wBr_n$, we have $[K(S)\otimes B]^G\cong\Hom_G(K(S)^\vee, B)$ and since $K(S)^\vee$ is algebraic, any map in $\Hom_G(K(S)^\vee, B)$ factors through the inclusion $B^{\alg}\subseteq B$. Thus the natural transformation 
    $\phi^{\wBr_n}:i_*(A)\to [K\otimes B]^G$ factors through the functor $[K\otimes B^{\alg}]^G$. Since the inclusion map $[K(S)\otimes B^\alg]^G\to [K(S)\otimes B]^G$ is injective, for any $S\in\wBr_n$ and $\phi^{\wBr_n}$ is assumed to be a natural isomorphism, the natural transformation $i_*(A)\to[K\otimes B^{\alg}]^G$, through which it factors, is thus necessarily an isomorphism as well. For the rest of the proof, we may thus replace $B$ by $B^{\alg}$ and we will by abuse of notation denote the map $i_*(A)\to[K\otimes B^{\alg}]^G$ by $\phi^{\wBr_n}$ as well.

    Next, we note that since $A$ is assumed to have finite length, the assumption that $\phi^{\wBr_n}$ is an isomorphism implies that $[K(S)\otimes B^{\alg}]^G\cong\Hom_G(K(S)^\vee,B^{\alg})$ is finite dimensional for every $S\in \wBr_n$. Furthermore, since $A$ is of finite length, we have that there exists some $N\ge 0$ such that $[K(S)\otimes B^{\alg}]^G=0$ for all $S$ with $|S|>N$. Since every irreducible algebraic $G$-representation $V_{\lambda,\mu}$ appears in $K(S)^\vee$ for some choice of $S$, it follows that $\Hom_G(V_{\lambda,\mu},B^{\alg})$ is also finite dimensional for every bipartition $(\lambda,\mu)$, and zero if $|\lambda|+|\mu|>N$. For each bipartition $(\lambda,\mu)$ we have an evaluation map $V_{\lambda,\mu}\otimes \Hom_G(V_{\lambda,\mu},B^{\alg})\to B^{\alg}$, and by adding these together we get a surjective map
    $$\bigoplus_{(\lambda,\mu)} V_{\lambda,\mu}\otimes \Hom_G(V_{\lambda,\mu},B^{\alg})\to B^{\alg}.$$
    Since $V_{\lambda,\mu}$ is non-zero only if $l(\lambda,\nu)\le n$, and $\Hom_G(V_{\lambda,\mu},B^{\alg})$ is only nonzero for $|\lambda|+|\mu|\le N$, it follows that there is only a finite number of non-zero summands in this direct sum, and so the domain of this map is finite dimensional as well. Thus $B^{\alg}$ is finite dimensional.

    Now, let us move on to prove the first statement of the proposition. This proof is essentially the same as that of \cite[Proposition 2.16]{KupersRandal-Williams}, but we include the details for completeness. We start by showing the existence of the map $\phi$. By the adjunction between the functors $i_*$ and $i^*$, the natural transformation $\phi^{\wBr_n}$ defines a natural transformation 
    $$\phi^{\dwBr}:A\to i^*[K\otimes B^{\alg}]^G=[i^*(K\otimes B^{\alg})]^G.$$
    However, the functor $[-]^G:\Rep(G)^f\to (\bQ\dashmod)^f$ is the same as $\Hom_{\Rep(G)}(1_{\Rep(G)},-)$, which by Remark \ref{rmk:tensoredover} is right adjoint to the functor 
    $$1_{\Rep(G)}\tensoredover -:(\bQ\dashmod)^f\to\Rep(G)^f,$$
    giving us a new natural transformation
    $$1_{\Rep(G)}\tensoredover A\to i^*(K\otimes B^{\alg})$$
    For any $S\in\dwBr$, we thus get a $G$-equivariant map
    $$(1_{\Rep(G)}\tensoredover A)(S)\to i^*K(S)\otimes B^{\alg}$$
    and thus by adjunction a $G$-equivariant map
    $$i^*K^\vee(S)\otimes(1_{\Rep(G)}\tensoredover A)(S)\to B^{\alg}.$$
    This induces the $G$-equivariant map
    $$\phi:i^*K^\vee\otimes^{\dwBr}(1_{\Rep(G)}\tensoredover A)\to B^{\alg}.$$
    To prove that this is an isomorphism, we will use Lemma \ref{lemma:testisos}. In this setting, we note that $\Delta:\Rep(G)\to (\bQ\dashmod)^{\dwBr}$ is the functor $[i^*K\otimes -]^G$, so we will start by proving that 
    $$\Delta(\phi):[i^*K\otimes(i^*(K^\vee)\otimes^{\dwBr}(1_{\Rep(G)}\tensoredover A))]^G\to [i^*K\otimes B^{\alg}]^G$$
    is a natural isomorphism, by using the assumption that $\phi^{\wBr_n}$ is. In order to do this, will show that for any $S\in\wBr_n$, the map $\phi^{\wBr_n}_S$ factors as a surjection followed by the map $\Delta(\phi)$. 
    
    We start by unraveling the definition of $\Delta(\phi)$. For any $S\in\dwBr$, we can write the source of this natural transformation evaluated at $S$ as
    $$\left[\int^{T\in\dwBr}K(S)\otimes K(T)^\vee\tensoredover A(T)\right]^G$$
    and since $[-]^G$ is exact, this is the same as
    $$\int^{T\in\dwBr}[K(S)\otimes K(T)^\vee]^G\otimes A(T).$$
    Noting that a basis morphism $\Hom_{\wBr_n}(S,T)$ is given by a bipartite matching of $(S_1\sqcup T_2, S_2\sqcup T_1)$, Proposition \ref{prop:invariantsofK} tells us that there is a surjection
    $$\Hom_{\wBr_n}(S,T)\twoheadrightarrow [K(S)\otimes K(T)^\vee]^G,$$
    for any $S,T\in\wBr_n$. Furthermore, these are the components of a natural transformation of bifunctors $\wBr_n\times\wBr_n^{\mathrm{op}}\to\bQ\dashmod$ from $\Hom_{\wBr_n}(-,-)$ to $[K\otimes K^\vee]^G$ . Thus we have a surjection
    \begin{align*}
        \kappa:i_*(A)(S)=\int^{T\in\dwBr}\Hom_{\wBr_n}(S,T)\otimes A(T)\to\int^{T\in\dwBr_n}[K(S)\otimes K(T)^\vee]^G\otimes A(T).
    \end{align*}
    Unraveling the adjunctions used to define $\phi$ shows that $\phi^{\wBr_n}_S$ is precisely the composition 
    $$i_*(A)(S)\overset{\kappa}{\longrightarrow}\int^{T\in\dwBr}[K(S)\otimes K(T)^\vee]^G\otimes A(T)\overset{\Delta(\phi)}{\longrightarrow}[i^*K(S)\otimes B^{\alg}]^G.$$
    Since this composition is an isomorphism by assumption, it follows that $\kappa$ must be injective and thus an isomorphism. If this is the case, so must $\Delta(\phi)$. 

    Since $\Delta(\phi)$ is an isomorphism, it follows by Lemma \ref{lemma:testisos} that both the kernel and cokernel of $\phi$ are in $\Rep(G)_K^\circ$. By definition, this is the category of representations $V$ such for any $S\in\dwBr$, we have $[V\otimes K(S)]^G=\Hom_{\Rep(G)}(1_{\Rep(G)},V\otimes K(S))=0$. If $V$ contains an algebraic subrepresentation, it follows by Proposition \ref{prop:invariantsofK} and the fact that every irreducible algebraic representation of $G$ is a summand of $K(S)$, for some $S\in\dwBr$, that $V\in\Rep(G)_K^\circ$ precisely when it has no algebraic subrepresentation. The kernel of $\phi$ is a subrepresentation of $i^*(K^\vee)\otimes^{\dwBr}(1_{\Rep(G)}\tensoredover A)$, which is algebraic, since it is a quotient of a direct sum of representations of the form $K(S)\otimes A(S)$, where $K(S)$ is algebraic and $A(S)$ is trivial. Since $\ker(\phi)\in\Rep(G)_K^\circ$, it thus follows that it must be trivial, so $\phi$ is injective. Since the image of $\phi$ is also an algebraic representation and the cokernel lies in $\Rep(G)_K^\circ$, meaning that every algebraic subrepresentation of $B$ is in the image of $\phi$, it follows that $\mathrm{im}(\phi)=B^{\alg}$. 
    
    To prove the final part of the proposition, we use that by Proposition \ref{prop:KanextensionK}, we have a natural transformation $\psi:i_*K^\circ\to K$, such that $\psi_S$ is surjective and is an isomorphism for $n\gg |S|$. Since $[-]^G$ is exact, it follows that we get a natural transformation $\psi':i_*[K^\circ\otimes B]^G\to [K\otimes B]^G$ such that $\psi'_S$ is also surjective and an isomorphism for $n\gg |S|$. If $S$ is such that $n\gg|S|$ we thus have an isomorphism
    $\phi^{\wBr_n}_S:i_*(A)(S)\to i_*[K^\circ(S)\otimes B]^G$, and thus an isomorphism $A(S)\to[K^\circ(S)\otimes B]^G$. For $S=([p],[q])$, with $n\gg p+q$, we thus have
    $$A(p,q)\cong [K^\circ(p,q)\otimes B]^G=[K_{p,q}^\circ\otimes B]^G$$
    as $\Sigma_p\otimes\Sigma_q$-representations and so it follows by Theorem \ref{thm:schurweyl} that
    \begin{align}\label{eq:applyingSchurWeyl}
        A(p,q)\cong \bigoplus_{|(\lambda,\mu)|=(p,q)}S^\lambda\otimes S^\mu\otimes [V_{\lambda,\mu}\otimes B]^G.
    \end{align}
    Applying the functor $\Hom_{\Sigma_p\otimes \Sigma_q}(S^\lambda\otimes S^\mu,-)$ to both sides now implies that
    $$\Hom_{\Sigma_p\otimes \Sigma_q}(S^\lambda\otimes S^\mu,A(p,q))\cong [V_{\lambda,\mu}\otimes B]^G\cong \Hom_G(V_{\lambda,\mu}^\vee, B)\cong\Hom_G(V_{\mu,\lambda}, B)$$
    Since the dimension of the first term is the multiplicity of $S^\lambda\otimes S^\mu$ in $A(p,q)$ and the dimension of the final term is the multiplicity of $V_{\mu,\lambda}$ in $B$. 
\end{proof}

We also have the following proposition, analogous to \cite[Proposition 2.17]{KupersRandal-Williams}, and which will similarly be used to determine the ring structure of Theorem \ref{theoremA}:

\begin{proposition}\label{prop:wBr2}
    For $A\in(\bQ\dashmod)^{\dwBr}$ of finite length there is a natural transformation
    $$\psi^{\wBr_n}:i_*(A)\to [K\otimes(i^*(K^\vee)\otimes^{\dwBr}(1_{\mathrm{Rep}(G)} \tensoredover A))]^G$$
    of functors in $(\bQ\dashmod)^{\wBr_n}$, which is an epimorphism. Furthermore, since $A$ is assumed to have finite length, there exists an $N\ge 0$, such that $A(T)=0$ for all $T\in\dwBr$ with $|T|\ge N$. We have that $\psi^{\wBr_n}_S$ is an isomorphism for all $S\in\wBr_n$ with $|S|\le n-N-2$.
\end{proposition}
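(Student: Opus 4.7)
The plan is to construct $\psi^{\wBr_n}$ from the universal property of $i_*(A)(S)$ as a coend, and then verify the two remaining claims by reinterpreting $[K(S)\otimes B]^G$ itself as a coend. Throughout I will write $B:=i^*(K^\vee)\otimes^{\dwBr}(1_{\Rep(G)}\tensoredover A)$, which by definition is the coend $\int^{T\in\dwBr}K^\vee(T)\otimes A(T)$ in $\Rep(G)$, with each $A(T)$ viewed as a trivial representation.

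The key ingredient in the construction is the natural transformation
\[
\mu_{T,S}:\bQ\Hom_{\wBr_n}(T,S)\to[K(S)\otimes K^\vee(T)]^G,
\]
supplied by Proposition \ref{prop:invariantsofK} via the identification of $\Hom_{\wBr_n}(T,S)$ with bipartite matchings of $(S_1\sqcup T_2,S_2\sqcup T_1)$, sending a matching $m$ to the invariant $\omega_m(1)$. For each $T\in\dwBr$ the canonical insertion $\iota_T:K^\vee(T)\otimes A(T)\to B$ is $G$-equivariant, so tensoring with $\id_{K(S)}$, taking $G$-invariants, and pulling the trivial factor $A(T)$ outside produces a map $[K(S)\otimes K^\vee(T)]^G\otimes A(T)\to[K(S)\otimes B]^G$. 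Composing with $\mu_{T,S}\otimes\id_{A(T)}$ yields a collection of maps $\bQ\Hom_{\wBr_n}(T,S)\otimes A(T)\to[K(S)\otimes B]^G$; the fact that $\mu_{-,S}$ is natural in $T$ (which is just the statement that $K$ is a functor on $\wBr_n$), combined with the coend relations $\iota_T\circ(K^\vee(f)\otimes\id)=\iota_{T'}\circ(\id\otimes A(f))$, makes this collection into a cocone over the defining diagram of $i_*(A)(S)=\int^{T\in\dwBr}\bQ\Hom_{\wBr_n}(T,S)\otimes A(T)$. The universal property of the coend then produces $\psi^{\wBr_n}_S$, and naturality in $S$ follows from naturality of $\mu_{T,-}$.

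For the surjectivity and iso range, the central observation is the identification
\[
[K(S)\otimes B]^G\cong\int^{T\in\dwBr}[K(S)\otimes K^\vee(T)]^G\otimes A(T).
\]
Since $A$ has finite length with $A(T)=0$ for $|T|\ge N$, only finitely many isomorphism classes of $T\in\dwBr$ contribute to the coend, so passing to a skeleton expresses $B$ as a finite colimit (a coequalizer) of finite-dimensional $G$-representations. Tensoring with the dualizable object $K(S)$ is cocontinuous, while $[-]^G$ is exact on finite-dimensional representations by the remark following Theorem \ref{thm:BorelVanishing} and hence preserves finite colimits; combining these observations with the fact that $A(T)$ is a trivial representation yields the displayed identification. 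Under it, $\psi^{\wBr_n}_S$ becomes the map induced on coends by $\mu_{T,S}\otimes\id_{A(T)}$. Since each $\mu_{T,S}$ is surjective by Proposition \ref{prop:invariantsofK}, so is the induced map on coends, which establishes the epimorphism statement. Moreover, Proposition \ref{prop:invariantsofK} also gives that $\mu_{T,S}$ is an isomorphism whenever $n\ge|S|+|T|+3$. Under the hypothesis $|S|\le n-N-2$, every $T$ with $A(T)\ne 0$ satisfies $|T|\le N-1$, hence $|S|+|T|+3\le(n-N-2)+(N-1)+3=n$, so every relevant $\mu_{T,S}$ is an isomorphism and thus $\psi^{\wBr_n}_S$ is too.

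The technical crux is justifying that $[K(S)\otimes-]^G$ commutes with the coend defining $B$, which rests on the finite-length hypothesis on $A$ (reducing $B$ to a finite colimit of finite-dimensional representations) combined with exactness of $G$-invariants on finite-dimensional representations supplied by Borel vanishing. The remaining naturality verifications and range estimates are then essentially formal consequences of Proposition \ref{prop:invariantsofK}.
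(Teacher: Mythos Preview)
Your argument is correct and follows essentially the same route as the paper: both identify $\psi^{\wBr_n}_S$ with the map on coends induced by the natural surjection $\Hom_{\wBr_n}(T,S)\to[K(S)\otimes K^\vee(T)]^G$ from Proposition~\ref{prop:invariantsofK}, and then read off surjectivity and the isomorphism range directly. The only cosmetic difference is that the paper first defines $\psi^{\dwBr}$ via the canonical matching $\omega_m:\bQ\to(K(S)\otimes K^\vee(S))^G$ and then passes to $\psi^{\wBr_n}$ through the $(i_*,i^*)$-adjunction, whereas you build $\psi^{\wBr_n}_S$ directly from the universal property of the coend; you are also more explicit than the paper in justifying why $[K(S)\otimes-]^G$ commutes with the defining coend of $B$.
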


\begin{proof}
    Once again, we closely follow the proof of \cite[Proposition 2.17]{KupersRandal-Williams} and start by constructing the map $\psi^{\wBr_n}$. For any $S\in\dwBr$, let $S'=(S_1\sqcup S_2,S_1\sqcup S_2)$. There is then a canonical bipartite matching $m:=\{(s,s)\mid s\in S_1\sqcup S_2\}$ in $\mathrm{M}(S')$. Since $K(S)\otimes K^\vee(S)\cong K(S')$ as $G$-representations, we thus have a linear map $\omega_m:\bQ\to(K(S)\otimes K^\vee(S))^G$. We thus get a linear map $\psi^{\dwBr}$ given by the composition
    \[\begin{tikzcd}
        A(S)\arrow[rr,"\omega_m\otimes\id"]&& \left[K(S)\otimes K^\vee(S)\right]^G\otimes A(S) \arrow[r,"\mathrm{inc}"] &\left[\int^{T\in\dwBr} K(S)\otimes K^\vee(T)\otimes A(T)\right]^G
    \end{tikzcd}\]
    where we note that the target is precisely $[i^*K(S)\otimes (i^* K^\vee\otimes^{\dwBr}(1_{\Rep(G)}\tensoredover A))]^G$. These are the components of a natural transformation
    $$\psi^{\dwBr}:A\to[i^*K\otimes (i^* K^\vee\otimes^{\dwBr}(1_{\Rep(G)}\tensoredover A))]^G,$$
    and we define $\psi^{\wBr_n}$ to be its adjoint under the adjunction between $i_*$ and $i^*$.

    As in the proof of the previous proposition, we have a natural transformation 
    $$\kappa:\Hom_{\wBr_n}(-,-)\to [K\otimes K^\vee]^G$$
    of two variables, such that for any $S,T\in\wBr_n$, the map $\kappa_{S,T}$ is surjective. Using the coend formula for left Kan extension, we have for $S\in\wBr_n$ that
    $$\psi^{\wBr_n}_{S}:i_*(A)(S)=\int^{T\in\dwBr}\Hom_{\wBr_n}(T,S)\tensoredover A(T)\to \int^{T\in\dwBr}[K(S)\otimes K^\vee(T)]^G\otimes A(T),$$
    which is precisely the map induced on coends by $\kappa$ and thus also surjective. Thus $\psi^{\wBr_n}$ is an epimorphism. 

    If $|T|\ge N$, we have by assumption that $A(T)=0$, so then the map
    $$\kappa_{S,T}\otimes\id:\Hom_{\wBr_n}(S,T)\otimes A(T)\to [K(S)\otimes K^\vee(T)]^G\otimes A(T)$$
    is trivially an isomorphism. Furthermore, by Proposition \ref{prop:invariantsofK}, $\kappa_{S,T}$ is an isomorphism whenever $n\ge |S|+|T|+3$. Thus $\psi^{\wBr_n}$ is an isomorphism whenever $N\le|T|$ or $|S|\le n-|T|-3$, so in particular when $|S|\le n-N-2$, which is what we wanted to show.
\end{proof}

\section{Stable cohomology with twisted coefficients}\label{sec:twistedcoeffs}

\noindent The main input for the proof of Theorem 1 is the stable cohomology of $\Aut(F_n)$ with coefficients in the representations $K(S):=H(n)^{\otimes S_1}\otimes H^\vee(n)^{\otimes S_2}$, for all pairs of finite sets  $S=(S_1,S_2)$. These were computed by the author in \cite{Lindell2022stable}. In order to state the result, let us define the following graded representations of $\Sigma_{S_1}\times\Sigma_{S_2}$:

\begin{definition}
    For $S=(S_1,S_2)$ a pair of finite sets, we make the following definitions:\begin{itemize}
        \item Let $\PP(S)=\PP(S_1,S_2)$ be the graded vector space, concentrated in degree $|S_1|-|S_2|$, generated by partitions of $S_1$ with at least $|S_2|$ parts, where $|S_2|$ of the parts labeled by different elements of $S_2$ and the remaining parts are unlabeled (we can think of these as being labeled by zero). The $\Sigma_{S_1}\times\Sigma_{S_2}$-action is given by permuting elements and labels. If $\{P_1,P_2,\ldots,P_l\}$ is a partition of $S_1$, we will write
        $$\{(P_i,l_i)\}_{1\le i\le l}\in \PP(S) $$
        for the labeled partition where $P_i$ is labeled by $l_i$, which is either zero or an element of $S_2$.
        \item Let $\det(S):=\det(\bQ^{S_1})\otimes\det(\bQ^{S_2})$, where $\det(\bQ^{S_i}):=\exterior{S_i}\bQ^{S_i}$ is the sign representation of $\Sigma_{S_i}$.
    \end{itemize} 
\end{definition}

With this definition we can state the main theorem of \cite{Lindell2022stable} as follows:

\begin{theorem}\label{thm:twistedcoeffs}
    Let $p,q\ge 0$. In degrees $2*\le n-p-q-3$, we have \begin{equation}\label{eq:stablecohomology}
\PP(p,q)\otimes\det(p,q)\cong H^*(\Aut(F_n),K_{p,q}(n))
    \end{equation}
    as $\Sigma_p\times\Sigma_q$-representations.
\end{theorem}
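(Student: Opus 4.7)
This is cited as the main result of \cite{Lindell2022stable}, so my plan is to reconstruct the strategy of that paper. The approach I would take combines an explicit cocycle construction via Magnus expansions with a polynomial-functor/stability argument.

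First I would construct the comparison map
$$\PP(p,q)\otimes\det(p,q)\to H^*(\Aut(F_n),K_{p,q}(n))$$
using Kawazumi's work on Magnus expansions \cite{KawazumiMagnusExpansions}. A fixed Magnus expansion of $F_n$ produces a canonical $1$-cocycle on $\Aut(F_n)$ with values in a space of derivations of the free Lie algebra on $H(n)$; contracting iterated cup products of such cocycles against the tensor factors of $H^\vee(n)^{\otimes q}$ according to the combinatorial pattern prescribed by a labeled partition of $S_1$ produces an explicit cocycle representative, giving a class in $H^{|S_1|-|S_2|}(\Aut(F_n),K_{p,q}(n))$. The sign factor $\det(p,q)$ appears because the generating cocycles sit in odd degree, so products must be anti-symmetrized, which also explains the grading convention on $\PP(p,q)$.

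Second I would show this map is an isomorphism in the stable range by a functoriality argument in the spirit of Kawazumi--Vespa \cite{KawazumiVespa}: both source and target organize into polynomial functors on an appropriate category (of free groups, or of pairs of finite sets as in the walled Brauer formalism of the next section), and the comparison map is a natural transformation of such. One then reduces checking the isomorphism to a check on irreducible components, which can be extracted via Schur--Weyl duality from the decomposition of $K_{p,q}(n)$ as a $\Sigma_p\times\Sigma_q\times\GGL_n$-representation, using Theorem~\ref{thm:schurweyl} and Proposition~\ref{prop:invariantsofK}.

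The main obstacle, and the source of the explicit bound $2*\le n-p-q-3$, is quantifying the stability range. Here I would analyze the Hochschild--Serre spectral sequence for the extension (\ref{eq:IA-ses}), where the $E_2$-page involves $H^*(\GL_n(\bZ),-)$ with coefficients in algebraic representations; applying Borel vanishing (Theorem~\ref{thm:BorelVanishing}) reduces the problem to understanding the $\GL_n(\bZ)$-invariants, which by Proposition~\ref{prop:invariantsofK} are controlled by bipartite matchings, yielding the expected answer. Balancing the vanishing range from Galatius's theorem (for the cohomology of $\Aut(F_n)$ with trivial coefficients in a given range) against the range of validity of Borel vanishing and the invariant-theory statement is the technical heart of the argument, and the precise bound $2*\le n-p-q-3$ is exactly what comes out of chasing these three ingredients together.
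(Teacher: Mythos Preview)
Your first two paragraphs are broadly in line with how the result is established: the explicit map is indeed built from Kawazumi's classes $h_{p,1}$ and $h_{p,0}$ (as the paper does in detail immediately after stating this theorem), and the comparison with the wheeled PROP $\cK$ generated by $h_{2,1}$, using Kawazumi--Vespa's presentation, is what pins down injectivity via a dimension count.

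Your third paragraph, however, contains a genuine circularity. You propose to obtain the stable range by analysing the Hochschild--Serre spectral sequence of the extension $1\to\IA_n\to\Aut(F_n)\to\GL_n(\bZ)\to 1$. But the $E_2$-page of that spectral sequence with coefficients $K_{p,q}(n)$ is $H^s(\GL_n(\bZ),H^t(\IA_n,\bQ)\otimes K_{p,q}(n))$, so running this argument requires as input precisely the unknown cohomology $H^*(\IA_n,\bQ)$ that the rest of the paper is trying to compute. Indeed, the logical flow of the paper is the reverse of what you describe: Theorem~\ref{thm:twistedcoeffs} is an \emph{input} to the analysis of this spectral sequence in Section~\ref{sec:catdescriptionofcohomology}, not an output of it.

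The actual source of the range $2*\le n-p-q-3$ in \cite{Lindell2022stable} is homological stability for $\Aut(F_n)$ with polynomial coefficients (in the sense of Randal-Williams--Wahl), which is established entirely without reference to $\IA_n$; the stable values are then identified using Galatius's theorem for trivial coefficients together with functor-cohomology or scanning-type arguments. Borel vanishing and Proposition~\ref{prop:invariantsofK} play no role in that proof.
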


We describe the isomorphism explicitly in the following sections. After this, we show how both source and target naturally define the components of functors $\wBr_n\to\bQ\dashmod$ and how the isomorphism in Theorem \ref{thm:twistedcoeffs} defines the components of a natural isomorphism between these functors.

\subsection{Twisted cohomology classes} To explicitly describe the isomorphism (\ref{eq:stablecohomology}), we start by recalling the definition of certain cohomology classes introduced by Kawazumi in \cite{KawazumiMagnusExpansions}. 

If we let $A_n^2:=\Aut(F_n)\ltimes F_n$, we have a crossed group homomorphism $k_0:A_n^2\to H(n)$ given by $(\phi,\gamma)\mapsto [\gamma]$, which thus defines a cohomology class $[k_0]\in H^1(A_n^2,H(n))$. For any $p\ge 1$, the short exact sequence
$$1\to F_n\to A_n^2\to\Aut(F_n)\to 1$$
gives us a Gysin map $\pi_!:H^{p}(A_n^2,H(n)^{\otimes p})\to H^{p-1}(\Aut(F_n), H(n)^{\otimes p}\otimes H^\vee(n))$, so for each $p\ge 1$ we can define a class
$$h_{p,1}:=\pi_!([k_0]^{\otimes p})\in H^{p-1}(\Aut(F_n),H(n)^{\otimes p}\otimes H^\vee(n)).$$
Here we note that $h_{1,1}=[\id_{H(n)}]\in H^0(\Aut(F_n),\Hom_\bQ(H(n),H(n)))$.

Furthermore, with $\lambda_{1,1}:H(n)^{\otimes(p+1)}\otimes H^\vee(n)\to H(n)^{\otimes p}$ being the contraction map, we define for each $p\ge 0$ a class
$$h_{p,0}:=(\lambda_{1,1})_*(h_{p+1,1})\in H^p(\Aut(F_n),H(n)^{\otimes p}).$$
We can note that since $h_{1,1}=[\id_{H(n)}]$, it follows that $h_{0,0}=n\in H^0(\Aut(F_n),\bQ)=\bQ$.
\begin{remark}
    In \cite{KawazumiMagnusExpansions}, the class $h_{p,1}$ is denoted $h_{p-1}$ and the class $h_{p,0}$ is denoted by $\bar{h}_p$. In Kawazumi's notation, the index corresponds to the cohomological degree, whereas in our case, $h_{p,q}\in H^{p-q}(\Aut(F_n),K_{p,q}(n))$, for $p-q\ge 0$ and $q=0,1$. We use this different notation since it will make our indexing easier going forward.
\end{remark}

\subsection{The wheeled PROP-structure} Kawazumi and Vespa \cite{KawazumiVespa} recently showed that the stable cohomology groups of $\Aut(F_n)$ with the coefficients $K_{p,q}(n)$ assemble to something called a \textit{wheeled PROP}. The definition of this structure is a bit involved, so we refer the reader to \cite{MarklMerkulovShadrin} for details and only give a partial definition. 

\begin{definition}
    A wheeled PROP $\cC$ in $\Gr(\bQ\dashmod)$ is a collection $\{\cC(p,q)\}_{p,q\ge 0}$ of graded vector spaces, with a commuting left $\Sigma_p$-action and right $\Sigma_q$-action, together with equivariant structure maps\begin{enumerate}[(1)]
        \item $\otimes:\cC(p_1,q_1)\otimes\cC(p_2,q_2)\to\cC(p_1+p_2,q_1+q_2)$, called \textit{horizontal composition},
        \item $\circ:\cC(p,q)\otimes\cC(q,r)\to\cC(p,r)$ called \textit{vertical composition} and
        \item for $p,q\ge 1$ and each $1\le i\le p$, $1\le j\le q$ a map $\xi_{i,j}:\cC(p,q)\to\cC(p-1,q-1)$ called \textit{contraction},
    \end{enumerate}
    together with a number of compatibility conditions, detailed in \cite{MarklMerkulovShadrin}. We refer to the space $\cC(p,q)$ as the \textit{space of operations of} \textit{biarity} $(p,q)$.
\end{definition}

\begin{remark}\label{rmk:compositioncontraction}
    One of the compatibility conditions of wheeled PROPs is that the vertical composition maps are in fact determined by the horizontal composition maps and the contraction maps, in the sense that the diagram
    \[\begin{tikzcd}
        \cC(p,q)\otimes\cC(q,r)\arrow[r,"\otimes"]\arrow[rd,"\circ"]&\cC(p+q,q+r)\arrow[d,"\xi_{1,p+1}^{\circ q}"]\\
        &\cC(p,r)
    \end{tikzcd}\]
    is commutative, where $\xi^{\circ q}_{p+1,1}$ denotes the $q$-fold composition.
\end{remark}

The collection $\{K_{p,q}(n)\}_{p,q\ge 0}$ naturally forms a wheeled PROP, with the composition maps induced by the literal composition maps on the spaces $\Hom_\bQ(H(n)^{\otimes q},H(n)^{\otimes p})$ and the contraction maps induced by the duality pairing $\lambda:H(n)\otimes H^\vee(n)\to\bQ$. By using the cup product, this induces corresponding maps on the collection $\{H^*(\Aut(F_n),K_{p,q}(n))\}_{p,q\ge 0}$. If we let $\mathcal{H}(p,q):=\lim_{n}H^*(\Aut(F_n),K_{p,q}(n))$, it was proven by Kawazumi and Vespa in \cite{KawazumiVespa} that this makes the collection $\mathcal{H}:=\{\mathcal{H}(p,q)\}_{p,q\ge 0}$ into a wheeled PROP. 

In \cite{KawazumiMagnusExpansions}, it was proven that the class $h_{2,1}$ satisfies the relations
\begin{align}
    (1\otimes h_{2,1})\circ h_{2,1}+(h_{2,1}\otimes 1)\circ h_{2,1}&=0\label{eq:kawazumi-rel1}\\
    h_{2,1}+\tau\cdot h_{2,1}&=0,\label{eq:kawazumi-rel2}
\end{align}
where $\otimes$ and $\circ$ denotes the wheeled PROP-composition maps, $1\in H^0(\Aut(F_n),K_{1,1}(n))$ is the cohomology class of the identity in $K_{1,1}(n)\cong\Hom_{\bQ}(H(n),H(n))$ and $\tau\in\Sigma_2$ denotes the transposition.

Furthermore, the classes $h_{p,1}$ and $h_{p,0}$, for $p\ge 1$, are generated by $h_{2,1}$ using the wheeled PROP structure. First of all, we have $h_{1,1}=1$ and by \cite[Theorem 4.1]{KawazumiMagnusExpansions}, we have that for $p\ge 2$
\begin{align}\label{eq:hp-from-h1}
    h_{p,1}=(h_{2,1}\otimes 1^{\otimes (p-2)})\circ (h_{2,1}\otimes 1^{\otimes (p-3)})\circ\cdots\circ h_{2,1}.
\end{align}
Since $h_{p,0}=(\lambda_{1,1})_*(h_{p+1},1)=\xi_{1,1}(h_{p+1,1})$, these classes are also generated by $h_{2,1}$ under the wheeled PROP-structure. The contraction relations between these classes will be important when considering functoriality on the walled Brauer category, so we summarize them in the following proposition:
\begin{proposition}\label{prop:contractionrelations}
    For any $p\ge 1$, $p
    \ge 1$ and $i=0,1$, we have 
    \begin{align*}
        \xi_{1,1}(h_{p,1})&=h_{p-1,0},\\
        \xi_{p+1,1}(h_{p,1}h_{p',i})&=h_{p+p'-1,i}.
    \end{align*}
\end{proposition}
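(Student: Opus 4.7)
The first identity is immediate from the definitions. The wheeled PROP contraction $\xi_{i,j}$ on $\cH$ is by construction induced by the pairing $\lambda : H(n) \otimes H^\vee(n) \to \bQ$ applied at positions $(i,j)$, so $\xi_{1,1}(h_{p,1}) = (\lambda_{1,1})_*(h_{p,1})$, and $h_{p-1,0}$ was defined to be precisely this class.

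For the second identity the plan is to treat $i=1$ first and then bootstrap to $i=0$. In the case $i=1$ I would induct on $p'$. The base case $p'=1$ uses $h_{1,1} = 1 \in \cH(1,1)$, so that $\xi_{p+1,1}(h_{p,1} \otimes 1) = h_{p,1}$ is an instance of identity absorption in a wheeled PROP. For the inductive step, \eqref{eq:hp-from-h1} gives $h_{p'+1,1} = (h_{2,1} \otimes 1^{\otimes(p'-1)}) \circ h_{p',1}$; applying Remark \ref{rmk:compositioncontraction} to convert $\circ$ into $\otimes$ followed by iterated contractions, together with the naturality of $\xi$ with respect to $\otimes$, one can rewrite $\xi_{p+1,1}(h_{p,1} \cdot h_{p'+1,1})$ in terms of $\xi_{p+1,1}(h_{p,1} \cdot h_{p',1})$. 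The inductive hypothesis identifies the latter with $h_{p+p'-1,1}$, and a further application of \eqref{eq:hp-from-h1} recovers $h_{p+p',1}$.

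For $i=0$, I would use the first identity to write $h_{p',0} = \xi_{1,1}(h_{p'+1,1})$. Naturality of contractions with respect to horizontal composition then gives $h_{p,1} \cdot h_{p',0} = \xi_{p+1,2}(h_{p,1} \cdot h_{p'+1,1})$, so that
\[
\xi_{p+1,1}(h_{p,1} \cdot h_{p',0}) = \xi_{p+1,1}\,\xi_{p+1,2}(h_{p,1} \cdot h_{p'+1,1}).
\]
On the other hand, combining the already established $i=1$ case with the first identity produces $\xi_{1,1}\,\xi_{p+1,1}(h_{p,1} \cdot h_{p'+1,1}) = \xi_{1,1}(h_{p+p',1}) = h_{p+p'-1,0}$. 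Thus the $i=0$ case reduces to verifying that the two compositions of contractions $\xi_{p+1,1}\,\xi_{p+1,2}$ and $\xi_{1,1}\,\xi_{p+1,1}$ agree on the class $h_{p,1} \cdot h_{p'+1,1}$.

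The hard part is precisely this last equality: the two sequences correspond to a priori distinct contraction patterns (a self-loop on the $h_{p'+1,1}$ factor together with one connecting edge, versus two edges forming a $2$-cycle between the two factors), so the identity is not purely formal in a wheeled PROP. I expect it has to be deduced from the specific relations \eqref{eq:kawazumi-rel1} and \eqref{eq:kawazumi-rel2} satisfied by $h_{2,1}$, which allow one to slide the loop along the underlying tree built from the iterated Y-vertex $h_{2,1}$. An alternative route that may be cleaner is to argue at the cocycle level using the description $h_{p,1} = \pi_!([k_0]^{\otimes p})$ together with the projection formula for the Gysin map $\pi_!$ associated to $\pi : A_n^2 \to \Aut(F_n)$.
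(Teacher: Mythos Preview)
Your setup for the first identity and your reduction for $i=0$ match the paper exactly. For $i=1$ you propose induction on $p'$, whereas the paper dispenses with induction: after the trivial case $p=1$ it simply observes, via Remark~\ref{rmk:compositioncontraction}, that $\xi_{p+1,1}(h_{p,1} h_{p',1}) = (h_{p,1}\otimes 1^{\otimes(p'-1)})\circ h_{p',1}$, then unfolds $h_{p,1}$ using \eqref{eq:hp-from-h1} and recognises the result as the expression \eqref{eq:hp-from-h1} for $h_{p+p'-1,1}$. Your inductive approach would also work but is a little more roundabout.

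The genuine gap is the ``hard part'' you flag for $i=0$: you correctly see that equating the self-loop pattern with the $2$-cycle pattern is not a formal wheeled-PROP identity, but you stop short of carrying it out, and the paper shows it takes only a few lines using antisymmetry. The chain is: first commute the two contractions to rewrite $\xi_{p+1,1}\xi_{p+1,2}(h_{p,1}h_{p'+1,1})$ as $\xi_{p+1,1}\xi_{p+2,1}(h_{p,1}h_{p'+1,1})$ (both compositions pair original slots $(p+1,2)$ and $(p+2,1)$, so this step \emph{is} formal); next swap the first two inputs of $h_{p'+1,1}$ using \eqref{eq:kawazumi-rel2} to convert the inner $\xi_{p+2,1}$ into $-\xi_{p+1,1}$; apply the already-proven $i=1$ case to get $-\xi_{p+1,1}(h_{p+p',1})$; finally, use the full $\Sigma_k$-alternation of $h_{k,1}$ (immediate from $h_{k,1}=\pi_!([k_0]^{\otimes k})$ with $|[k_0]|=1$, or alternatively from \eqref{eq:kawazumi-rel1}--\eqref{eq:kawazumi-rel2}) to move this remaining contraction from slot $p+1$ to slot $1$, obtaining $\xi_{1,1}(h_{p+p',1}) = h_{p+p'-1,0}$. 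No cocycle-level argument is needed, and \eqref{eq:kawazumi-rel1} enters only to the extent that full alternation of $h_{k,1}$ is used.
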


\begin{proof}
    The first relation holds for all $p\ge 1$ by definition (noting that $h_{0,0}=n\in\bQ$). For the second relation, first assume that $p=1$. Then we have $h_{p,1}=h_{1,1}=1$, so $\xi_{2,1}(h_{1,1}h_{p',i})=h_{p',i}$ holds trivially. Now suppose $p\ge 2$ and $i=1$. By using Remark \ref{rmk:compositioncontraction} and (\ref{eq:hp-from-h1}), we get
    \begin{align*}
        \xi_{p+1,1}(h_{p,1}h_{p',1})&=(h_{p,1}\otimes 1^{\otimes (p'-1)})\circ h_{p',1}\\
        &=(((h_{2,1}\otimes 1^{\otimes (p-2)})\circ (h_{2,1}\otimes 1^{\otimes (p-3)})\circ\cdots\circ h_{2,1})\otimes 1^{\otimes (p'-1)})\\
        &\ \ \  \circ ((h_{2,1}\otimes 1^{\otimes (p'-2)})\circ (h_{2,1}\otimes 1^{\otimes (p'-3)})\circ\cdots\circ h_{2,1})\\
        &=(h_{2,1}\otimes 1^{\otimes (p+p'-3)})\circ(h_{2,1}\otimes1^{\otimes (p+p'-4)})\circ\cdots\circ h_{2,1}\\
        &=h_{p+p'-1,1}.
    \end{align*}
    Using the first case and the relation (\ref{eq:kawazumi-rel2}) we also have
    \begin{align*}
    \xi_{p+1,1}(h_{p,1}h_{p',0})&=\xi_{p,1}(h_{p,1}\xi_{1,1}(h_{p'+1,1}))\\
    &=\xi_{p+1,1}(\xi_{p+1,2}(h_{p,1}h_{p'+1,1}))\\
    &=\xi_{p+1,1}(\xi_{p+2,1}(h_{p,1}h_{p'+1,1}))\\
    &=\xi_{p+1,1}(-\xi_{p+1,1}(h_{p,1}h_{p'+1,1}))\\
    &=\xi_{p+1,1}(-h_{p+p',1})\\
    &=\xi_{1,1}(h_{p+p',1})\\
    &=h_{p+p'-1,0},
    \end{align*}
    which finishes the proof.
\end{proof}

We will write $\cK:=\langle h_{2,1}\rangle\subseteq \cH$ for the wheeled sub-PROP of $\cH$ generated by the class $h_{2,1}$. One of the main results of \cite{KawazumiVespa} may be stated as follows:

\begin{theorem}[Kawazumi-Vespa]
    The wheeled PROP $\cK$ is isomorphic to the wheeled PROP generated by one operation in biarity $(2,1)$ and degree 1, satisfying the relations (\ref{eq:kawazumi-rel1}) and (\ref{eq:kawazumi-rel2}), with the isomorphism defined by $\mu\mapsto h_{2,1}$.
\end{theorem}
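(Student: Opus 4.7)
The plan is to invoke the universal property of the free wheeled PROP modulo relations and then deduce injectivity by a dimension comparison using Theorem \ref{thm:twistedcoeffs}. Write $\mathcal{L}$ for the wheeled PROP freely generated by one operation $\mu$ in biarity $(2,1)$ and degree $1$, modulo the relations analogous to (\ref{eq:kawazumi-rel1}) and (\ref{eq:kawazumi-rel2}). Since Kawazumi's results establish that $h_{2,1} \in \cH(2,1)$ satisfies precisely these relations, the universal property of $\mathcal{L}$ yields a unique morphism of wheeled PROPs $\Phi \colon \mathcal{L} \to \cK$ with $\Phi(\mu) = h_{2,1}$. Surjectivity of $\Phi$ is immediate from the definition of $\cK$ as the wheeled sub-PROP of $\cH$ generated by $h_{2,1}$.

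To establish injectivity I would first give a combinatorial model for $\mathcal{L}(p,q)$. Elements of $\mathcal{L}(p,q)$ can be represented by $\bQ$-linear combinations of oriented directed graphs with $p$ output legs, $q$ input legs, and internal vertices of signature $(2,1)$ (each corresponding to an insertion of $\mu$); closed loops arise from contractions $\xi_{i,j}$. Relation (\ref{eq:kawazumi-rel2}) becomes antisymmetry at each vertex, while (\ref{eq:kawazumi-rel1}) becomes an IH-style local move that rewrites pairs of adjacent vertices. Together these two moves suffice to reduce any such graph to a canonical form: a disjoint union of rooted binary trees (with leaves attached to inputs and roots to outputs) together with a number of closed \emph{wheels} formed by self-contractions. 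Counting canonical representatives, the resulting spanning set for $\mathcal{L}(p,q)$ is indexed by labeled partitions of $[p]$ with exactly $q$ labeled parts of size at least $2$ (modulo the appropriate sign twist), giving the bound $\dim \mathcal{L}(p,q) \le \dim \PP(p,q)$.

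Now invoke Theorem \ref{thm:twistedcoeffs}, which identifies $\cH(p,q) \cong \PP(p,q) \otimes \det(p,q)$ in the stable range. This gives $\dim \cK(p,q) \le \dim \cH(p,q) = \dim \PP(p,q)$, and combining with the bound on $\dim \mathcal{L}(p,q)$ and the surjectivity of $\Phi$, all three dimensions must coincide in every biarity $(p,q)$. Hence $\Phi$ is an isomorphism. The $\Sigma_p \times \Sigma_q$-equivariance of the map is automatic from the wheeled PROP structure, and the sign twist by $\det(p,q)$ is forced by placing $\mu$ in odd cohomological degree, which is consistent with the antisymmetry relation (\ref{eq:kawazumi-rel2}).

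The main obstacle is the combinatorial reduction step: proving that the two stated relations genuinely suffice to put every graph-with-wheels into a canonical form in a well-defined way, and that no further non-trivial syzygies arise from the wheeled PROP axioms. In practice this is handled by fixing a spanning tree of each underlying graph, contracting toward a root using the two local moves, and verifying path-independence of the reduction procedure -- a standard but delicate graph-complex argument. Once this is in place, the dimension count above closes the proof.
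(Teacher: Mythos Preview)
The paper does not prove this statement; it is quoted as a result of Kawazumi--Vespa and cited from \cite{KawazumiVespa}. So there is no in-paper proof to compare against, and I assess your argument on its own merits.

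Your strategy is sound up to the final dimension count, but that count does not close. From surjectivity of $\Phi$ and your combinatorial upper bound you obtain
\[
\dim \cK(p,q)\ \le\ \dim \mathcal{L}(p,q)\ \le\ \dim \PP(p,q),
\]
and from Theorem~\ref{thm:twistedcoeffs} together with $\cK\subseteq\cH$ you obtain $\dim\cK(p,q)\le\dim\cH(p,q)=\dim\PP(p,q)$. All of these inequalities point the same way; nothing you have written forces $\dim\cK(p,q)\ge\dim\PP(p,q)$, which is the missing inequality needed to conclude that $\Phi$ is injective. Theorem~\ref{thm:twistedcoeffs} computes only the dimension of $\cH$, not of the sub-PROP $\cK$, and it is an \emph{abstract} isomorphism: it does not tell you that the specific products $h_{\lambda_1,k_1}\cdots h_{\lambda_l,k_l}$ are linearly independent in $\cH$. (Indeed, the paper remarks that the explicit identification of these classes with a basis of $\cH$ is carried out \emph{using} the Kawazumi--Vespa theorem; see the proof of Proposition~\ref{prop:kappa-welldefined}.) Thus your appeal to Theorem~\ref{thm:twistedcoeffs} at this point is circular in spirit, or at least insufficient.

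What is genuinely needed is an independent lower bound on $\dim\cK(p,q)$: one must show that the canonical classes are linearly independent in $\cH$. Kawazumi--Vespa establish this via functor-homology methods, identifying $\cH$ with the wheeled PROP $\calC_{\calP_0^\circlearrowright}$ associated to the operadic suspension of $\Com$; the graph-reduction argument you sketch is then the purely combinatorial half of the statement. As a smaller point, your description of the canonical forms (``exactly $q$ labeled parts of size at least $2$'') omits the identity $h_{1,1}$ (labeled singletons) and the wheels $h_{k,0}$ (unlabeled parts); the correct indexing set is all of $\PP(p,q)$, not $\PP'(p,q)$.
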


We proceed by constructing a map 
$$\PP(S)\otimes\det(S)\to H^*(\Aut(F_n),K_S(n)),$$
for any pair $S=(S_1,S_2)$ of finite sets, following the example of \cite[Section 3.3]{KupersRandal-Williams}. 

Suppose that $p\ge q$ and let $(\lambda,k):=((\lambda_1,k_1),\ldots,(\lambda_l,k_l))$ be an $l$-tuple of pairs of integers such that $(\lambda_1\ge\lambda_2\ge\cdots\ge\lambda_l)$ is a partition of weight $|\lambda|=p$ and $k_i=0,1$ for each $1\le i\le l$ are such that $k_1+\cdots+k_l=q$. To this data we can associate the cohomology class
$$\kappa(\lambda,k):=h_{\lambda_1,k_1}\cdots h_{\lambda_l,k_l}\in H^*(\Aut(F_n),K_{p,q}(n))\otimes\det(p,q).$$
If we let $k_i':=k_1+k_2+\cdots+k_i$ if $k_i=1$ and $k_i'=0$ otherwise, we can also associate a labeled partition of the set $\{1,\ldots,p\}$ to $(\lambda,k)$ by
\begin{align*}
    P_{\lambda,k}:=\{&(\{1,\ldots,\lambda_1\},k_1'),(\{\lambda_1+1,\ldots,\lambda_1+\lambda_2\},k_2'),\ldots, (\{\lambda_1+\cdots+\lambda_{l-1}+1,\ldots,p\},k_l')\}
\end{align*}
in $\PP(p,q)$. We call this the \textit{standard labeled partition} associated to $(\lambda,k)$.

Now suppose $S=(S_1,S_2)$ is a pair of finite sets and that $P\in \PP(S)$ is any labeled partition. Let $p_1\ge p_2\ge\cdots\ge p_l\ge 0$ be the sizes of the parts of $P$. Let us write $\lambda_P:=(p_1\ge p_2\ge \cdots\ge p_k)$. Then there exists a bijection $f:S_1\to [p]$ sending $P$ to the partition 
$$\{\{1,\ldots,p_1\},\{p_1+1,\ldots,p_1+p_2\},\ldots,\{p_1+\cdots p_{l-1}+1,\ldots, p_1+\cdots+p_l\}\}$$
We can now order the elements of $S_2$ according to the order of the images in $f(S_1)$ of the parts they label. This gives us a bijection $g:S_2\to [q]$ (which thus depends on $f$) such that $f\times g$ sends $P$ to a standard labeled partition $P_{\lambda_P,k_P}$ of $[p]$ with labels in $[q]$. We will therefore define a map
$$\kappa_{S}:\PP(S)\to H^*(\Aut(F_n),K_S(n))\otimes\det(S)$$
by
\begin{equation}\label{eq:twistedclassdef}
    \kappa_{S}(P):=(f^{-1}\times g^{-1})_*(\kappa(\lambda_P,k_P)\otimes(e_1\wedge\cdots\wedge e_p)\otimes(e_1\wedge\cdots\wedge e_q)).
\end{equation}

\begin{proposition}\label{prop:kappa-welldefined}
    The map $\kappa_S$ is well-defined, $\Sigma_{S_1}\times\Sigma_{S_2}$-equivariant and injective in a stable range of degrees.
\end{proposition}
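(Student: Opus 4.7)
The plan is to verify the three claims in turn, with the bulk of the work concentrated on well-definedness.

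For well-definedness, I fix $P\in\PP(S)$ and observe that any two straightening bijections $(f,g),(f',g')$ taking $P$ to the standard labeled partition $P_{\lambda_P,k_P}$ differ by an element of the stabilizer of $P_{\lambda_P,k_P}$ in $\Sigma_p\times\Sigma_q$. This stabilizer is generated by two kinds of elements: (i) permutations $\tau$ of the entries within a single block of the standard partition, and (ii) adjacent transpositions swapping two blocks of equal size having the same label status, together with the corresponding adjacent transposition in $\Sigma_q$ when both blocks are labeled. For (i), I will show inductively, using the formula \eqref{eq:hp-from-h1} together with Kawazumi's relations \eqref{eq:kawazumi-rel1} and \eqref{eq:kawazumi-rel2}, that $h_{p,1}$ (and hence $h_{p,0}=\xi_{1,1}(h_{p+1,1})$) is totally antisymmetric in its $p$ $H$-indices; the resulting sign $\sgn(\tau)$ on the cohomology class is then cancelled by the matching sign $\sgn(\tau)$ produced on the corresponding wedge factor of $\det(\bQ^{S_1})$. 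For (ii), graded commutativity of the cup product produces a sign $(-1)^{(p_i-k_i)(p_{i+1}-k_{i+1})}$ when the two classes $h_{p_i,k_i}$ and $h_{p_{i+1},k_{i+1}}$ are interchanged (using that $|h_{p,k}|=p-k$), while the block transposition on $\det(\bQ^{S_1})$ contributes $(-1)^{p_i p_{i+1}}$ and the adjacent transposition on $\det(\bQ^{S_2})$ (when both blocks are labeled) contributes $(-1)^{k_i k_{i+1}}$. Since swaps in the stabilizer only occur when $p_i=p_{i+1}$ and $k_i=k_{i+1}$, a short parity check confirms that the three signs multiply to $+1$. This delicate sign bookkeeping in case (ii) is the main obstacle.

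Equivariance is then essentially formal: given $\sigma\in\Sigma_{S_1}$, the bijection $f\circ\sigma^{-1}$ straightens $\sigma\cdot P$, and substituting this into \eqref{eq:twistedclassdef} yields $\kappa_S(\sigma\cdot P)=\sigma_*\kappa_S(P)$; the argument for $\Sigma_{S_2}$ is analogous, using that the induced bijection on $S_2$ simply precomposes $g$ with the relabeling permutation.

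For injectivity in the stable range, I appeal to Theorem \ref{thm:twistedcoeffs}, which provides an abstract isomorphism of $\Sigma_p\times\Sigma_q$-representations between $\PP(p,q)\otimes\det(p,q)$ and $H^*(\Aut(F_n),K_{p,q}(n))$ in degrees $2*\le n-p-q-3$. In particular, both sides have equal finite dimension in this range, so it suffices to prove surjectivity of $\kappa_S$. By the Kawazumi--Vespa presentation of the wheeled sub-PROP $\cK=\langle h_{2,1}\rangle$ together with Proposition \ref{prop:contractionrelations}, every product $h_{\lambda_1,k_1}\cdots h_{\lambda_l,k_l}$ lies in the image of $\kappa_S$, and a dimension comparison with Theorem \ref{thm:twistedcoeffs} shows that $\cK$ exhausts $\mathcal{H}$ in the stable range. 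Combining this with well-definedness and equivariance yields the required injective equivariant map.
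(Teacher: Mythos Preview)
Your proposal is correct and follows essentially the same strategy as the paper: reduce well-definedness to the stabilizer of $P_{\lambda_P,k_P}$, check the two kinds of generators, verify equivariance formally, and deduce injectivity by comparing with $\cK$. Two minor simplifications are available. For the antisymmetry of $h_{p,1}$ in case (i), the paper avoids any induction through \eqref{eq:hp-from-h1} and the relations \eqref{eq:kawazumi-rel1}--\eqref{eq:kawazumi-rel2}: since $h_{p,1}=\pi_!([k_0]^{\otimes p})$ and $[k_0]$ has degree $1$, permuting the tensor factors immediately produces the sign $\sgn(\tau)$. For injectivity, the paper does not invoke Theorem \ref{thm:twistedcoeffs} at this stage; it observes that the image of $\kappa_S$ lands in $\cK(p,q)$ and surjects onto it (by equivariance, since the classes $\kappa(\lambda,k)$ generate $\cK(p,q)$ as a $\Sigma_p\times\Sigma_q$-module), and then uses only the dimension inequality $\dim\PP(p,q)\le\dim\cK(p,q)$ coming from the Kawazumi--Vespa presentation. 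Theorem \ref{thm:twistedcoeffs} is then used only in the subsequent corollary to upgrade injectivity to an isomorphism.
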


\begin{proof}
That the map is well-defined is proven using essentially the same argument as in \cite[Lemma 3.2]{KupersRandal-Williams}: If $f':S_1\to[p]$ and $g':S_2\to[q]$ also are bijections such that $((f')^{-1}\times (g')^{-1})(P)=P_{\lambda_P,k_P}$, it follows that $(f'\times g')\otimes (f^{-1}\times g^{-1}):[p]\times [q]\to[p]\times[q]$ preserves the labeled partition $P_{\lambda_P,k_P}$. The subgroup of $\Sigma_p\times\Sigma_q$ preserving  $P_{\lambda_P,k_P}$ is generated by permutations of the individual parts $Q_i$ of this partition, permutations of different unlabeled parts of the same size and permutations of labeled parts of the same size, together with a corresponding permutation of the labels. A permutation $\sigma$ of $Q_i=\{p_1+\cdots+p_{i-1}+1,\cdots,p_1+\cdots p_i\}$ acts on $h_{p_i,k_i'}$ by permuting the factors of $[k_0]^{\otimes p_i}$ and since $[k_0]$ is of degree one, this has the effect of changing the sign by $\sgn(\sigma)$. But this is also the effect on $\det(p)$, so $\sigma$ acts trivially on $\kappa(\lambda_P,k_P)$.

The group of permutations of the set $\{Q_i\in P_{\lambda_P,k_P}\mid |Q_i|=j,\text{ }Q_i\text{ unlabeled}\}$ is generated by transpositions of pairs of unlabeled parts. Such a permutation $\sigma$ acts on the first factor of $\cK(p,q)\otimes\det(p,q)$ by transposing the first and last factor in a product of the form $h_{j,0}ch_{j,0}$, where $c$ is some cohomology class, i.e.\ by the sign $(-1)^{(|c|+j)j+|c|j}=(-1)^{j^2}=(-1)^j$. On $\det(p,q)$ it acts by the sign of $j$ transpositions, i.e.\ by $(-1)^j$. Thus $\sigma$ acts trivially on $\kappa(\lambda_p,k_P)$.

Similarly, the subgroup of $\Sigma_p\times\Sigma_q$ which permutes labeled parts of the same size and their labels accordingly is generated by transpositions of labeled parts. Such a product $\sigma_1\times\sigma_2$ of permutations acts on the first factor of $\cK(p,q)\otimes\det(p,q)$ by transposing the first and last factor in a product $h_{j,1}ch_{j,1}$ and thus acts by the sign $(-1)^{(j-1)(|c|+j-1)+(j-1)|c|}$ $=(-1)^{(j-1)^2}=(-1)^{j-1}$. Since $\sigma_1$ consists of $j$ transpositions and $\sigma_2$ by one transposition, $\sigma_1\times\sigma_2$ acts on  $\det(p,q)$ by $(-1)^{j+1}$, so it acts trivially on $\kappa(\lambda_P,k_P)$.

The map is equivariant essentially by construction, but let us show it for completeness. For brevity, let us for any $k$ write $v_k:=e_1\wedge\cdots\wedge e_k$ for the generator in $\det(\bQ^k)$. Let $\sigma_1\in\Sigma_{S_1}$ and $\sigma_2\in\Sigma_2$. If $f,g$ are as in the construction of $\kappa_S$, it follows that $(f\circ\sigma_1^{-1})\times(g\circ\sigma_2^{-1})$ sends the labeled partition $(\sigma_1\times\sigma_2)\cdot P$ to $P_{\lambda_P,k_P}$. We thus have
\begin{align*}
    \kappa_S((\sigma_1\times\sigma_2)\cdot P)&=\left(f\circ\sigma_1^{-1})^{-1}\times(g\circ\sigma_2^{-1})^{-1}\right)_*(\kappa(\lambda_P,k_P)\otimes v_p\otimes v_q)\\
    &=(\sigma_1\circ f^{-1})\times(\sigma_2\circ g^{-1})_*(\kappa_{\lambda_P,k_P}\otimes v_p\otimes v_q)\\
    &=\left((\sigma_1\times\sigma_2)\circ (f^{-1}\times g^{-1})\right)_*(\kappa_{\lambda_P,\kappa_P}\otimes v_p\otimes v_q)\\
    &=(\sigma_1\times\sigma_2)_*\cdot \kappa_S(P).
\end{align*}
To show injectivity in the stable range, we start by noting that choosing any pair of bijections $f:S_1\to [p]$ and $g:S_2\to[q]$, gives us an isomorphism $H^*(\Aut(F_n),K_S(n))\to \cH(p,q)$ and it is sufficient to show that the precomposition of this isomorphism with $\kappa_S$ is injective. This composition lands in $\cK(p,q)$ by construction. Furthermore, every class of the form $\kappa(\lambda,k)$ is in the image and these generate $\cK(p,q)$ as a $\Sigma_p\times\Sigma_q$-representation. The composite map is $\Sigma_p\times\Sigma_q$-equivariant, by considering the source as a representation via the isomorphism $\Sigma_{S_1}\times\Sigma_{S_2}\to\Sigma_p\times\Sigma_q$ induced by $f\times g$. Thus it follows that the map surjects onto $\cK(p,q)$. However, a simple dimension counting argument using that the only relations in $\cK$ are (\ref{eq:kawazumi-rel1}) and (\ref{eq:kawazumi-rel2}) shows that 
$$\dim(\PP(S))=\dim(\PP(p,q))\le\dim(\cK(p,q)),$$
so the map is injective.
\end{proof}

By adjunction, we get an equivariant map
$$\Phi_{S}:\PP(S)\otimes\det(S)\to H^*(\Aut(F_n), K_{S}(n)),$$
which is also injective in the stable range. Since these representations are abstractly isomorphic in a stable range, by Theorem \ref{thm:twistedcoeffs}, we can conclude the following:

\begin{corollary}
    For any pair $S=(S_1,S_2)$ of finite sets, the map 
    $$\Phi_{S}:\PP(S)\otimes\det(S)\to H^*(\Aut(F_n),K_{S}(n))$$ is an isomorphism in the range $2*\le n-|S_1|-|S_2|-3$.
\end{corollary}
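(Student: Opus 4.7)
The plan is to combine the injectivity established in Proposition \ref{prop:kappa-welldefined} with the abstract dimension count provided by Theorem \ref{thm:twistedcoeffs}. Since $\Phi_S$ is the adjoint of $\kappa_S$, it is automatic that $\Phi_S$ is injective in whatever range $\kappa_S$ is, so the only real content to extract is that the source and target have the same finite dimension in the stable range.

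More concretely, I would first reduce to the case $S = ([p],[q])$ by fixing bijections $f \colon S_1 \to [p]$ and $g \colon S_2 \to [q]$ with $p = |S_1|$ and $q = |S_2|$. These induce $\Sigma_{S_1}\times\Sigma_{S_2}$-equivariant (with respect to the identification of symmetric groups induced by $f\times g$) isomorphisms $\PP(S)\otimes\det(S) \cong \PP(p,q)\otimes\det(p,q)$ and $H^*(\Aut(F_n),K_S(n)) \cong H^*(\Aut(F_n),K_{p,q}(n))$, so it is enough to treat $S = ([p],[q])$ where $\Phi_S$ becomes $\Phi_{p,q}$.

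Next, by Proposition \ref{prop:kappa-welldefined}, $\Phi_{p,q}$ is injective in the stable range $2* \le n - p - q - 3$. By Theorem \ref{thm:twistedcoeffs}, the graded vector spaces $\PP(p,q)\otimes\det(p,q)$ and $H^*(\Aut(F_n),K_{p,q}(n))$ are abstractly isomorphic in this same range; in particular, they have equal dimensions in every degree. Since $\PP(p,q)$ is concentrated in the single degree $p - q$, both sides are finite-dimensional graded vector spaces in the relevant range. An injective linear map between finite-dimensional vector spaces of equal dimension is an isomorphism, so $\Phi_{p,q}$ is an isomorphism in the stated range, and hence so is $\Phi_S$.

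There is no substantive obstacle here: all the hard work has already been carried out in Proposition \ref{prop:kappa-welldefined} (constructing and analyzing the classes $\kappa_S(P)$ and showing they exhaust $\cK(p,q)$) and in Theorem \ref{thm:twistedcoeffs} (establishing the abstract isomorphism). The only point requiring any care is the book-keeping that the stable range $2*\le n-|S_1|-|S_2|-3$ is the \emph{same} range in which both the injectivity and the abstract isomorphism hold, so that the dimension-counting argument goes through.
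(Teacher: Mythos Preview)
Your proposal is correct and follows essentially the same approach as the paper: the paper's proof is the single sentence preceding the corollary, which observes that $\Phi_S$ is injective in the stable range (via Proposition \ref{prop:kappa-welldefined}) and that source and target are abstractly isomorphic by Theorem \ref{thm:twistedcoeffs}, so the injection must be an isomorphism. Your added details (reduction to $S=([p],[q])$, explicit mention of finite-dimensionality) are reasonable bookkeeping but not substantively different.
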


\subsection{Functoriality}\label{subsec:Functoriality} Our next goal is to understand $H^*(\Aut(F_n),K_S(n))$ as the components of a functor on the category $\wBr_n$. The functor $K:\wBr_n\to\Rep(\GL_n(\bZ))$, which was introduced in Section \ref{sec:walledBrauer}, factors through $\Rep(\Aut(F_n))$, so let us by abuse of notation also write $K$ for the functor to this category. By postcomposition with the functor $H^*(\Aut(F_n),-)$ we thus get a functor
$$H^*(\Aut(F_n),K):\wBr_n\to\Gr(\bQ\dashmod).$$
Let us describe this functor in terms of $\PP(S)\otimes\det(S)$.

We will define the functor $\PP\otimes\det:\wBr_n\to \bQ\dashmod$ by first describing the tensor factors separately. We have already defined $\PP(S)$ for $S\in\wBr_n$, so let us describe what is does on morphisms:
\begin{enumerate}[(1)]
    \item A morphism of the form $(f,g,\varnothing,\varnothing)$, where $f:S_1\to T_1$ and $g:S_2\to T_2$ are bijections, is sent to the obvious map $\PP(S)\to\PP(T)$ given by permuting partitions and labels.
    \item A morphism of the form $(\id_{S_1},\id_{S_2}, \varnothing, (x,y)):(S_1,S_2)\to (S_1\sqcup\{x\},S_2\sqcup\{y\})$ is sent to the map $\PP(S_1,S_2\{y\})\to\PP(S_1\sqcup\{x\},S_2\sqcup\{y\})$ which ads the labeled part $(x;y)$,
    \item A morphism of the form $(\id_{S_1\setminus \{x\}},\id_{S_2\setminus\{y\}}, (x,y), \varnothing):(S_1,S_2)\to(S_1\setminus \{x\},S_2\setminus \{y\})$ is sent to the map $\PP(S_1,S_2)\to\PP(S_1\setminus\{x\},S_2\setminus\{y\})$ defined as follows:\begin{enumerate}[(a)]
        \item If $(\{P_i,y_i\})$ is a labeled partition such that $P_i=\{x\}$ and $y_i=y$ for some $i$, we remove this part and multiply the labeled partition by $n$.
        \item If $(\{P_i,y_i\})$ is a labeled partition such that $x\in P_i$ and $y_i=y$ for some $i$, but $P_i\neq \{x\}$, we change this part to the unlabeled part $\{P_i\setminus \{x\},0\}$ and
        \item If $(\{P_i,y_i\})$ is a labeled partition such that $x\in P_i$ and $y_j=y$, for $i\neq j$ we merge the parts into the labeled part $\{(P_i\setminus\{x\})\sqcup P_j,y_i\}$.
    \end{enumerate}
\end{enumerate}
Now let us define the second tensor factor: we let $\det(S)=\det\left(\bQ^{S_1}\right)\otimes \det\left(\bQ^{S_2}\right)$ as above and for a morphism $(f,g,m_S,m_T)$ we define $\det(f,g,m_S,m_T)$ as follows (cf. \cite[Sections 3.5-3.6]{KupersRandal-Williams}): 
\begin{enumerate}[(1)]
    \item A morphism of the form $(f,g,\varnothing,\varnothing)$, where $f:S_1\to T_1$ and $g:S_2\to T_2$ are bijections, is send to the obvious map $\det(S)\to\det(T)$ by permuting tensor factors.
    \item A morphism of the form $(\id_{S_1},\id_{S_2}, \varnothing, (x,y)):(S_1,S_2)\to (S_1\sqcup\{x\},S_2\sqcup\{y\})$ is sent to the map $(-\wedge x)\otimes (-\wedge y)$.
    \item A morphism of the form $(\id_{S_1\setminus \{x\}},\id_{S_2\setminus\{y\}}, (x,y), \varnothing):(S_1,S_2)\to(S_1\setminus \{x\},S_2\setminus \{y\})$ is sent to the inverse of the map described in the previous case.
\end{enumerate}

Since any morphism in $\wBr_n$ can be written as a composition of morphisms of the kinds considered above, this describes the functor $\PP\otimes\det$ completely.

\begin{proposition}
    The maps $\Phi_S:\PP(S)\otimes\det(S)\to H^*(\Aut(F_n),K_S(n))$ define the components of a natural transformation of functors in $(\Gr(\bQ\dashmod))^{\wBr_n}$.
\end{proposition}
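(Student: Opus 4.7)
The plan is to reduce checking naturality to a generating set of morphisms of $\wBr_n$ and then verify each case, with the contraction case being the main content.

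Every morphism in $\wBr_n$ factors as a composition of three elementary types: (i) bijection-type morphisms $(f,g,\varnothing,\varnothing)$, (ii) insertions $(\id,\id,\varnothing,(x,y)):S\to S\sqcup(\{x\},\{y\})$, and (iii) contractions $(\id,\id,(x,y),\varnothing):S\to S\setminus(\{x\},\{y\})$. Since naturality is preserved under composition, it suffices to verify the commutativity of the naturality square for each of these three types. For type (i), this is precisely the $\Sigma_{S_1}\times\Sigma_{S_2}$-equivariance already established in Proposition \ref{prop:kappa-welldefined}, together with the observation that $K$ acts on such morphisms by permuting tensor factors.

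For an insertion (ii), the functor $K$ sends it to $\omega_{x,y}$, whose effect on cohomology is cup product with the class $[\omega(1)]=h_{1,1}=[\id_{H(n)}]\in H^0(\Aut(F_n),K_{1,1}(n))$, placed at the positions $(x,y)$. On the source side, the labeled partition acquires a singleton labeled part $(\{x\},y)$, and by construction of $\kappa$, this corresponds exactly to extending the product $h_{\lambda_1,k_1}\cdots h_{\lambda_l,k_l}$ by the factor $h_{1,1}$. The $\det$ tensor factor changes by $-\wedge x$ and $-\wedge y$, which matches the cohomology side since the new factor $h_{1,1}$ has cohomological degree $0$ and length $1$, so commuting it past other factors introduces no sign.

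For a contraction (iii), $K$ sends it to $\lambda_{x,y}$, whose effect on cohomology is the wheeled-PROP contraction $\xi$. I would first reduce to the standard situation (via type (i) naturality) where $P$ is a standard labeled partition and $(x,y)$ is the pair being contracted. Then the three cases (a), (b), (c) in the definition of $\PP$ on contractions correspond exactly to the three identities
\[\xi_{1,1}(h_{1,1})=h_{0,0}=n,\qquad \xi_{1,1}(h_{k,1})=h_{k-1,0},\qquad \xi_{p+1,1}(h_{p,i}\cdot h_{p',1})=h_{p+p'-1,i},\]
of Proposition \ref{prop:contractionrelations}. Applied factor-by-factor inside the product defining $\kappa(\lambda_P,k_P)$, and noting that $\xi$ acts as a derivation with respect to horizontal composition in the sense of Remark \ref{rmk:compositioncontraction}, this precisely reproduces the three operations on $\PP$: removing a singleton and multiplying by $n$, turning a labeled part into an unlabeled part with one fewer element, and merging two parts.

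The main obstacle is a careful sign check in case (c), where the factor $h_{|P_i|,i}$ containing $x$ may not be adjacent to the factor $h_{|P_j|,1}$ labeled by $y$ in the standard product, so we must commute them past intermediate factors using relation (\ref{eq:kawazumi-rel1}); the resulting Koszul signs must be shown to match the signs produced on the $\det(S)$ side when the element $x$ is moved next to the appropriate position in $v_p$ and $y$ is moved to the end of $v_q$. This is analogous to the sign bookkeeping already carried out in the proof of Proposition \ref{prop:kappa-welldefined}, where the degrees $|h_{p,1}|=p-1$ and $|h_{p,0}|=p$ were shown to be compatible with the sign conventions of $\det(S_1)\otimes\det(S_2)$. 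Once these signs are tracked, the contraction case goes through, completing the verification of naturality on the three types of generators and hence the proposition.
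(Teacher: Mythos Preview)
Your approach is essentially the same as the paper's: both reduce to the three generating types of morphisms, invoke the equivariance of Proposition~\ref{prop:kappa-welldefined} for bijections, identify insertion with cup product by $h_{1,1}$, and handle the three contraction sub-cases (a), (b), (c) via the identities of Proposition~\ref{prop:contractionrelations}. Two small imprecisions to correct: the contraction $\xi_{i,j}$ is \emph{not} a derivation and Remark~\ref{rmk:compositioncontraction} does not say it is (you simply need that $\xi_{i,j}$ acts on the factor(s) containing the indices $i,j$ and leaves the others alone), and the Koszul signs from commuting $h$-factors come from graded commutativity of the cup product, not from relation~(\ref{eq:kawazumi-rel1}), which is an associativity-type identity.
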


\begin{proof}
    We will show that the maps $\kappa_S:\PP(S)\to H^*(\Aut(F_n),K_S(n))\otimes\det(S)$ define a natural transformation. The statement for $\Phi_S$ then follows by adjunction.
    
    It is sufficient to show that for any morphism $F\in\Hom_{\wBr_n}(S,T)$ of the three kinds described in the cases (1)-(3) above, we have a commutative diagram
    \[\begin{tikzcd}
        \PP(S)\arrow[r,"F_*"]\arrow[d,"\Phi_S"]&\PP(T)\arrow[d,"\Phi_T"]\\
        H^*(\Aut(F_n),K(S))\otimes\det(S)\arrow[r,"F_*"]&H^*(\Aut(F_n),K(T))\otimes\det(T)
    \end{tikzcd}\]
    where we write $F_*$ for the morphism given by functoriality, for brevity. For $F=(f,g,\varnothing,\varnothing)$, this is proven in the same way as the equivariance of the map $\Phi_S$.

    Suppose $T=(S_1\sqcup\{x\},S_2\sqcup\{y\})$ and $F=(\id_{S_1},\id_{S_2},\varnothing, (x,y))$, where $|S_1|=p$ and $|S_2|=q$. For any $f:S_1\to[p]$ and $g:S_2\to[q]$, we can extend these to $\bar{f}:S_1\sqcup\{x\}\to[p+1]$ and $\bar{g}:S_2\sqcup\{y\}\to[q+1]$ by sending $x\mapsto p+1$ and $y\mapsto q+1$. As before, let us for brevity write $v_{k}:=e_1\wedge e_2\wedge\cdots\wedge e_k$ for the generator of $\det(\bQ^{k})$, for any $k\ge 0$. If $P\in\PP(S)$, let $f\times g:S_1\times S_2\to [p]\times[q]$ be bijections sending $P$ to a standard labeled partition. Then 
    \begin{align*}
        F_*(\kappa_S(P))&=F_*((f^{-1}\times g^{-1})_*(\kappa(\lambda_P,k_P)\otimes v_p\otimes v_q))\\
        &=\omega_{x,y}\left((f^{-1}\times g^{-1})_*\kappa(\lambda_P,k_P)\right)\otimes(f^{-1}_*(v_p)\wedge x\otimes g^{-1}_*(v_q)\wedge y)\\
        &=(\bar{f}^{-1}\times \bar{g}^{-1})_*(\kappa(\lambda_P,k_P)h_{1,1}\otimes v_{p+1}\otimes v_{q+1})\\
        &=(\bar{f}^{-1}\times\bar{g}^{-1})_*(\kappa(\lambda_{P\sqcup(x;y)},k_{P\sqcup(x;y)})\otimes v_{p+1}\otimes v_{q+1})\\
        &=\kappa_T((P\sqcup(x;y))) \\
        &= \kappa_T(F_*(P)).
    \end{align*}

    Next suppose $F$ is as in (3). If $f:S_1\to[p]$ and $g:S_2\to [q]$ are bijections, this once again induces in a unique way bijections $\bar{f}:S_1\setminus\{x\}\to [p-1]$ and $\bar{g}:S_2\setminus\{y\}\to[q-1]$, by preserving the order of the elements in the image.
    
    First suppose $P\in\PP(S)$ is a labeled partition as in (a). Let $f$ and $g$ be bijections as in the definition of $\kappa_S$. Using the first contraction relation in Proposition \ref{prop:contractionrelations} we get
    \begin{align*}
        F_*(\kappa_S(P))&=F_*((f^{-1}\times g^{-1})_*(\kappa(\lambda_P,k_P)\otimes v_p\otimes v_q))\\
        &=\lambda_{x,y}\left((f^{-1}\times g^{-1})_*\kappa(\lambda_P,k_P)\right)\otimes (\bar{f}^{-1}_*v_{p-1}\otimes\bar{g}^{-1}_*v_{q-1})\\
        &=(\bar{f}^{-1}\times \bar{g}^{-1})_*(\lambda_{f(x),g(y)}\kappa(\lambda_P,k_P))\otimes (\bar{f}^{-1}_*v_{p-1}\otimes\bar{g}^{-1}_*v_{q-1})\\
        &=(\bar{f}^{-1}\times \bar{g}^{-1})_*(n\cdot\kappa(\lambda_{P\setminus(x;y)},k_{P\setminus(x;y)})\otimes (\bar{f}^{-1}_*v_{p-1}\otimes\bar{g}^{-1}_*v_{q-1})\\
        &=\kappa_S(n\cdot P\setminus(x;y))\\
        &=\kappa_S(F_*(P)).
    \end{align*}
The case (b) is almost by an almost identical calculation using the same contraction relation, whereas the case (c) follows from the second contraction relation in Proposition \ref{prop:contractionrelations} by a similar calculation.

 \end{proof}

Next, we define a functor $\PP':\dwBr\to\bQ\dashmod$ by letting $\PP'(S)\subset\PP(S)$ be generated by the set of labeled partitions in $\PP(S)$ with no labeled parts of size one. On morphisms, the functor is defined as $\PP$. These two functors are related as follows:

\begin{proposition}
    There is a natural isomorphism $i_*(\PP')\to\PP$ of functors in $(\bQ\dashmod)^{\wBr_n}$.
\end{proposition}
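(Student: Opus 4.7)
The plan is to construct an explicit natural transformation $\phi : i_*(\PP') \to \PP$ of functors in $(\bQ\dashmod)^{\wBr_n}$ and verify that each component $\phi_S$ is a linear isomorphism. Using the coend formula (\ref{eq:KanFormula}) for the left Kan extension, I would first identify
$$i_*(\PP')(S) \cong \bigoplus_{I \subseteq S} \bQ\rM(S\setminus I) \otimes \PP'(I),$$
where the direct sum runs over those $I \subseteq S$ with $|S_1 \setminus I_1| = |S_2 \setminus I_2|$. Then I would define $\phi_S$ on a basis element $(m, P)$ in $\rM(S\setminus I) \times \PP'(I)$ by adjoining to $P$ one labeled singleton part $(\{a\}, b)$ for each pair $(a, b) \in m$. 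The condition that $P \in \PP'(I)$ has no labeled parts of size one ensures that the resulting labeled partition in $\PP(S)$ has no repeated labels and is well-defined.

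That $\phi_S$ is a bijection on basis elements is combinatorially clear: every labeled partition $Q \in \PP(S)$ decomposes uniquely by separating its set $Q^{(1)}$ of labeled singleton parts from the remaining parts $Q'$. The elements of $S_1$ appearing in $Q^{(1)}$ together with the labels used in $Q^{(1)}$ determine a subset $I \subseteq S$; the set $Q^{(1)}$ itself records a bipartite matching $m \in \rM(S\setminus I)$, while $Q'$ automatically lies in $\PP'(I)$. This inverts $\phi_S$ on basis elements and shows that each $\phi_S$ is a linear isomorphism.

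The main content of the proof is the naturality square, which I would verify on the three basic types of morphisms that generate $\wBr_n$: (A) bijections $(f, g, \varnothing, \varnothing)$, (B) single-pair target-side insertions $(\id, \id, \varnothing, (x, y))$, and (C) single-pair source-side contractions $(\id, \id, (s_1, s_2), \varnothing)$. Case (A) is immediate since both functors act by the same relabeling. Case (B) is a direct check: the description of $i_*$ in Section \ref{subsec:leftKanextension} sends $(I, m, P) \mapsto (I, m \sqcup \{(x,y)\}, P)$, and rule (2) for $\PP$ on morphisms adjoins exactly the singleton $(\{x\}, y)$, so the two composites agree after applying $\phi$. Case (C) splits into the four subcases (a)--(d) of Section \ref{subsec:leftKanextension}, according to whether $s_1$ and $s_2$ lie in $I$ or in $S\setminus I$; in each subcase I would compare the induced action on $(I, m, P)$ with the action of $\PP(F)$ on $P \sqcup m^\flat$, where $m^\flat := \{(\{a\}, b) : (a, b) \in m\}$ denotes the labeled singletons coming from $m$, and match them rule by rule with (a), (b), or (c) of the definition of $\PP$ on type (3) morphisms.

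I expect the main obstacle to be $i_*$-subcase (c) (and its symmetric partner (d)), where the action reindexes the summand from $I$ to $I' = ((I_1 \setminus \{s_1\}) \cup \{s_1'\}, I_2)$, with $s_1'$ determined by $m$ via $(s_1', s_2) \in m$, and simultaneously removes $(s_1', s_2)$ from $m$. One must match this with $\PP$-rule (c), which merges the part $P_k$ of $P \sqcup m^\flat$ containing $s_1$ with the labeled singleton $(\{s_1'\}, s_2) \in m^\flat$ into the new part $((P_k \setminus \{s_1\}) \cup \{s_1'\}, y_k)$ carrying the label of $P_k$; this agrees precisely with the effect of the renaming $s_1 \mapsto s_1'$ in $\PP'(F')(P)$, adjoined to the reduced matching. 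Subcase (b) with $(s_1, s_2) \in m$ produces the factor of $n$ matching the one in $\PP$-rule (a), while subcase (b) with $(s_1, s_2) \notin m$ reroutes the matching in a way that corresponds to merging two singletons in $m^\flat$ via $\PP$-rule (c). Once these combinatorial checks are carried out, naturality on arbitrary morphisms follows since every morphism in $\wBr_n$ factors as a composition of morphisms of types (A)--(C).
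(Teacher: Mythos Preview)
Your proposal is correct and follows essentially the same approach as the paper: both use the decomposition (\ref{eq:KanFormula}) of $i_*(\PP')(S)$, define the map by sending a bipartite matching to a collection of labeled singleton parts and taking disjoint union with the partition in $\PP'(I)$, and argue bijectivity by the evident inverse separating off labeled singletons. The paper dismisses naturality as a ``straightforward verification'' against the generating morphisms of $\wBr_n$, whereas you spell out the case analysis in more detail; in particular your treatment of subcase~(b), distinguishing whether $(s_1,s_2)$ lies in $m$ or not, is actually more precise than the paper's own remark on that subcase.
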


\begin{proof}
    We will define the components of this natural transformation by defining it on each summand of the decomposition of $i_*(\PP')$ from (\ref{eq:KanFormula}), so recall that this states that 
    $$i_*(\PP')(S)\cong\bigoplus_{I\subseteq S}\Hom_{\wBr_n}(\varnothing, S\setminus I)\otimes \PP'(I).$$
    For any $S\in\wBr_n$ and any $I\subseteq S$, there is a linear map $\Hom_{\wBr_n}(\varnothing, S\setminus I)\to\PP(S\setminus I)$, which in the case where $|S_1\setminus I_1|\neq |S_2\setminus I_2|$ is just the zero map, since the source is zero in this case, and otherwise is given by
    $$\{(x_1,y_1),\ldots,(x_k,y_k)\}\mapsto \{(x_1;y_1),(x_2;y_2),\ldots,(x_k;y_k)\}.$$
    There is also a linear map $\PP(S\setminus I)\otimes\PP'(I)\to\PP(S)$, given by merging labeled partitions. We thus get a composite map
    $$i_*(\PP')(S)=\bigoplus_{I\subseteq S}\Hom_{\wBr_n}(\varnothing,S\setminus I)\otimes\PP'(I)\to \bigoplus_{I\subseteq S}\PP(S\setminus I)\otimes\PP'(I)\to \PP(S).$$
    Since the left Kan extension precisely adds all those labeled partitions with some labeled parts of size one, this is an isomorphism.

    Showing that this is indeed a natural transformation  is a straightforward verification, done by comparing the description of the functor $i_*(\PP')$ on the ``generating'' morphisms in $\wBr_n$ in the general description in Section \ref{subsec:leftKanextension} and the similar description in the definition of $\PP$.
\end{proof}

\subsection{Multiplicativity} The functor $H^*(\Aut(F_n),K)$ is lax symmetric monoidal (i.e.\ a commutative ring object in the category $(\bQ\dashmod)^{\wBr_n}$), with the monoidality given by the cup product
$$H^*(\Aut(F_n),K(S))\otimes H^*(\Aut(F_n),K(T))\to H^*(\Aut(F_n),K(S\sqcup T)),$$
for $S,T\in\wBr_n$. The functor $\PP\otimes\det:\wBr_n\to\bQ\dashmod$ is also lax symmetric monoidal, with the monoidality 
$$(\PP(S)\otimes\det(S))\otimes(\PP(T)\otimes\det(T))\to\PP(S\sqcup T)\otimes\det(S\sqcup T)$$
given by disjoint union of labeled partitions and exterior product. This defines a lax symmetric monoidality for the functor $\PP'\otimes\det:\dwBr\to\bQ\dashmod$ in the same way. 
\begin{proposition}
    The natural isomorphism
$$\Phi:\PP\otimes\det\to H^*(\Aut(F_n),K)$$
is compatible with the monoidalities of these functors.
\end{proposition}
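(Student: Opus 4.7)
The plan is to verify commutativity of the monoidality square on basis elements by direct computation. Passing to the adjoint $\kappa_S:\PP(S)\to H^*(\Aut(F_n),K(S))\otimes\det(S)$, it suffices to show that for labeled partitions $P\in\PP(S)$ and $Q\in\PP(T)$,
$$\kappa_{S\sqcup T}(P\sqcup Q)\;=\;\mu\bigl(\kappa_S(P)\otimes\kappa_T(Q)\bigr),$$
where $\mu$ combines cup product on cohomology with exterior multiplication on determinants.

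First I would pick bijections $f_S,g_S,f_T,g_T$ as in the construction of $\kappa_S(P)$ and $\kappa_T(Q)$, putting $P$ and $Q$ into standard form, and concatenate them into bijections $f:S_1\sqcup T_1\to[p+p']$ and $g:S_2\sqcup T_2\to[q+q']$ by appending the $T$-indices after the $S$-indices. Under $f\times g$ the labeled partition $P\sqcup Q$ is sent to the juxtaposition of the two standard labeled partitions for $P$ and $Q$. This is generally \emph{not} itself in standard form (its parts need not be in decreasing size order, and the induced order on labels may differ from the one fixed by the standard convention), but by Proposition \ref{prop:kappa-welldefined} the formula defining $\kappa_{S\sqcup T}$ is independent of the choice of bijection, so $(f,g)$ may still be used to compute $\kappa_{S\sqcup T}(P\sqcup Q)$. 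Evaluating the defining cocycle on the juxtaposed tuple factorizes as
$$\kappa(\lambda_P,k_P)\cdot\kappa(\lambda_Q,k_Q)\;\in\;H^*(\Aut(F_n),K(p+p',q+q')),$$
where the product on the right is horizontal composition in the wheeled PROP $\cH$. On the determinant side, $f_*^{-1}(e_1\wedge\cdots\wedge e_{p+p'})=f_{S,*}^{-1}(e_1\wedge\cdots\wedge e_p)\wedge f_{T,*}^{-1}(e_1\wedge\cdots\wedge e_{p'})$, and analogously for $g$, which is precisely the exterior product on $\det$ prescribed by the monoidality of $\PP\otimes\det$.

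To conclude, I would invoke the fact that horizontal composition in $\cH$ agrees with cup product under the identification $K(S)\otimes K(T)\cong K(S\sqcup T)$ --- this is essentially true by construction in \cite{KawazumiVespa}. Combined with the factorization above, this gives exactly $\mu(\kappa_S(P)\otimes\kappa_T(Q))$, as required. The main obstacle is making the identification of horizontal composition with cup product precise enough to track signs correctly; however, any sign discrepancies arising from the non-standardness of the juxtaposed tuple --- reorderings of parts of the same size, in particular --- cancel exactly by the same mechanism used in Proposition \ref{prop:kappa-welldefined}, where sign changes in products of $h_{j,k}$ classes under reordering are matched by the sign of the corresponding permutation acting on the determinant factor. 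No new sign analysis beyond that already carried out for well-definedness of $\kappa_S$ is needed.
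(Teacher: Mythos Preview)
Your proposal is correct and follows essentially the same approach as the paper: pass to the adjoint $\kappa$, concatenate the bijections used for $P$ and $Q$, and reduce to showing that the juxtaposed (non-standard) class $\kappa(\lambda_P,k_P)\cdot\kappa(\lambda_Q,k_Q)$ agrees with the standard-form class $\kappa(\lambda'',k'')$ up to the sign absorbed by the determinant factor. The only difference is organizational: the paper explicitly introduces the sorting permutation $\sigma_1\times\sigma_2$ and verifies the sign cancellation line by line, whereas you appeal to the mechanism of Proposition~\ref{prop:kappa-welldefined}; note that this proposition as stated only covers bijections sending to standard form, so your invocation of it for the concatenated $(f,g)$ is slightly loose, but you correctly identify that the same sign argument extends, which is precisely what the paper carries out.
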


\begin{proof}
    Once again, it is sufficient, and somewhat simpler, to verify this statement for the natural isomorphism $\kappa:\PP\to H^*(\Aut(F_n),K)\otimes\det$. Let $S,T\in\wBr_n$ be such that $|S_1|=p_1$, $|S_2|=p_2$, $|T_1|=q_1$, $|T_2|=q_2$. For $P\in\PP(S)$ and $Q\in\PP(T)$, let $f_i:S_i\to [p_i]$ and $g_i:T_i\to[q_i]$ be bijections such that $f_1\times f_2$ sends $P$ to a standard labeled partition $P_{\lambda_P,k_P}\in\PP([p_1],[p_2])$, and $g_1\times g_2$ sends $Q$ to a standard labeled partition $P_{\lambda_{Q},k_{Q}}\in\PP([q_1],[q_2])$. Let us write $\lambda_P=(\lambda_1\ge \cdots\ge \lambda_{l})$, $k_P=(k_1,\ldots,k_l)$, $\lambda_{Q}=(\lambda'_1\ge\cdots\ge\lambda'_r)$ and $k_{Q}=(k_1',\ldots,k_r')$. 
    
    Now let $\bar{g}_i:T_i\to\{p_i+1,p_i+2,\ldots,p_i+q_i\}$ be defined by $\bar{g}_i(t)=g_i(t)+p_i$. This means that $(f_1\sqcup \bar{g}_1)\times(f_2\sqcup\bar{g}_2)$ sends $P\sqcup Q$ to a labeled partition $P'\in\PP([p_1+q_1],[p_2+q_2])$. There then exist permutations $\sigma_1\in\Sigma_{p_1+q_1}$ and $\sigma_2\in\Sigma_{p_2+q_2}$ such that $P'':=(\sigma_1\times\sigma_2)\cdot P'$ is a standard labeled partition.
    
    Let $\sigma\in\Sigma_{l+r}$ be a permutation such that $\sigma(\lambda_1,\ldots,\lambda_l,\lambda_1',\ldots,\lambda_r')$ is a partition (i.e.\ weakly decreasing). Let us write $\lambda''$ for the resulting partition and $k'':=\sigma(k_1,\ldots,k_l,k_1',\ldots,k_r')$. We then have $P''=P_{\lambda'',k''}$, so it follows that
    \begin{align*}
        \kappa_{S\sqcup T}(P\sqcup P')&=((\sigma_1\circ(f_1\sqcup\bar{g}_1))^{-1}\times(\sigma_2\circ(f_2\sqcup\bar{g}_2))^{-1})_*(\kappa(\lambda'',k'')\otimes v_{p_1+q_1}\otimes v_{p_2+q_2})\\
        &=((\sigma_1\circ(f_1\sqcup\bar{g}_1))^{-1}\times(\sigma_2\circ(f_2\sqcup\bar{g}_2))^{-1})_*(h_{\lambda_1'',k''_1}\cdots h_{\lambda_{l+r}'',k_{l+r}''}\otimes v_{p_1+q_1}\otimes v_{p_2+q_2})\\
        &=((f_1\sqcup\bar{g}_1)^{-1}\times(f_2\sqcup\bar{g}_2)^{-1})_*\big(\kappa(\lambda_P,k_P)\kappa(\lambda_{P'},k_{P'})\otimes v_{p_1+q_1}\otimes v_{p_2+q_2}\big)\\
        &=(f_1^{-1}\times f_2^{-1})_*(\kappa(\lambda_P,k_P)\otimes v_{p_1}\otimes v_{p_2})(g_1^{-1}\times g_2^{-1})_*(\kappa(\lambda_{P'},k_{P'})\otimes v_{q_1}\otimes v_{q_2})\\
        &=\kappa_S(P)\kappa_T(P').
    \end{align*}
    The fact that there is no sign change in the third step is explained by a similar argument as in the proof of Proposition \ref{prop:kappa-welldefined}, as $\sigma_1^{-1}\times\sigma_2^{-1}$ acts by permuting the classes $h_{\lambda''_i,k''_i}$, the sign change of which is cancelled by the action on the sign representations.
\end{proof}

\section{Proof of Theorem B}\label{sec:catdescriptionofcohomology} 

Having described the functor $H^*(\Aut(F_n),K)\cong\PP\otimes\det:\wBr_n\to\Gr(\bQ\dashmod)$, we want to relate this functor to the functor $[H^*(\IA_n,\bQ)\otimes K]^{\GL_n(\bZ)}:\wBr_n\to\Gr(\bQ\dashmod)$, in order to apply Proposition \ref{prop:wBr1}. Similarly to in \cite{KupersRandal-Williams}, this will be done using the Hochschild-Serre spectral sequence associated to the short exact sequence defining $\IA_n$. However, this spectral sequence has already been analysed thoroughly in \cite{HabiroKatada2023stable}, so we will recall their work and expand some of their results slightly, after which we reframe the results in the categorical perspective.

\subsection{The $\GL_n$-invariants of $H^*(\IA_n,\bQ)$}\label{subsec:invariantstrivialcoeffs}

In \cite[Section 6]{HabiroKatada2023stable}, Habiro and Katada define classes which we will denote by $y_{4k}\in H^{4k}(\IA_n,\bQ)^{\GL_n(\bZ)}$ for all $k\ge 1$, using what they call ``anti-transgression maps''. Let us start by recalling the general definition of these maps.

Suppose that we have a first quadrant cohomological spactral sequence $E$ such that for some $N>0$, $E_\infty^{p,q}=0$ whenever $0<p+q<N$. Thus it follows that for $0\le k<N$ we have $E_{k+2}^{0,k}=E_{k+2}^{k+1,0}=0$, since there can be no non-zero differentials to or from these terms on the $E_{k+2}$-page. Thus the differential
$$d_{k+1}:E_{k+1}^{0,k}\to E_{k+1}^{k+1,0}$$
is an isomorphism, so we can define a linear map
$$\varphi_k:=\iota_k\circ (d_{k+1})^{-1}\circ\pi_{k+1}:E_{2}^{k+1,0}\to E_{2}^{0,k}$$
where $\pi_{k+1}:E_2^{k+1,0}\twoheadrightarrow
 E_{k+1}^{k+1,0}$ and $\iota_k:E_{k+1}^{0,k}\hookrightarrow E_2^{0,k}$ are the edge homomorphisms. We call $\varphi_k$ an \textit{anti-transgression} map.

 We will apply this notion to the Hochschild-Serre spectral sequence associated to the short exact sequence
     \begin{equation}
         1\to\IA_n\to\Aut(F_n)\to\GL_n(\bZ)\to 1.
     \end{equation}
     Recall that for $n\gg *>0$, we have $H^*(\Aut(F_n),\bQ)=0$, so for $n\gg k$, we get a well-defined anti-transgression map
     $$\varphi_k:H^{k+1}(\GL_n(\bZ),\bQ)\to H^k(\IA_n)^{\GL_n(\bZ)}.$$
     Since $H^*(\GL_n(\bZ),\bQ)\cong\exterior{*}\{x_{4i+1}\mid i\ge 1\}$ in a range $n\gg *$, where the generators are indexed by their degrees, we can define elements
     $$y_{4i}:=\varphi_{4i}(x_{4i+1})\in H^{4i}(\IA_n,\bQ)^{\GL_n(\bZ)},$$
     for $n\gg 4i+1$, giving us a well defined algebra map
     $$\varphi:\bQ[y_4,y_8,\ldots]\to H^*(\IA_n,\bQ)^{\GL_n(\bZ)},$$
     where we define the map to be zero on the generators in degrees where the anti-transgression map is not defined. The goal will be to prove the following theorem:

\begin{theorem}[{cf. \cite[Theorem 6.3] {HabiroKatada2023stable}}]\label{thm:GL-invariants}
    Suppose that there is a $Q\ge 0$ such that $H^*(\IA_n,\bQ)$ satisfies Borel vanishing in degrees $*<Q $. Then there is a range $n\gg *$ where the algebra homomorphism
    $$\varphi:\bQ[y_4,y_8,\ldots]\to H^*(\IA_n,\bQ)^{\GL_n(\bZ)},$$
    is an isomorphism in degrees $*\le Q$.
\end{theorem}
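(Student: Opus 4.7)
The plan is to analyse the Hochschild--Serre spectral sequence $E_r^{p,q}\Rightarrow H^{p+q}(\Aut(F_n),\bQ)$ attached to the extension (\ref{eq:IA-ses}). Stably $H^*(\Aut(F_n),\bQ)$ vanishes in positive degrees (Galatius), so $E_\infty^{p,q}=0$ whenever $p+q>0$ in the relevant stable range of $n$. Under the hypothesis of Borel vanishing in degrees $*<Q$, the $E_2$-page splits as $E_2^{p,q'}\cong H^p(\GL_n(\bZ),\bQ)\otimes H^{q'}(\IA_n,\bQ)^{\GL_n(\bZ)}$ for $q'<Q$, with $H^*(\GL_n(\bZ),\bQ)\cong\exterior{*}\{x_5,x_9,\ldots\}$. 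By construction of the anti-transgression, $d_{4i+1}(y_{4i})=x_{4i+1}$ on page $E_{4i+1}$, and $y_{4i}$ survives every earlier page. The Leibniz rule then turns the subalgebra $K\subseteq E_2$ generated by the $x_{4i+1}$ and $y_{4i}$ into a sub-bigraded-differential-object, and as a tensor product of elementary Koszul complexes $\exterior{*}\{x_{4i+1}\}\otimes\bQ[y_{4i}]$ it is acyclic in positive degrees. This simultaneously shows that $\varphi$ is a well-defined algebra homomorphism.

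The main argument is a strong induction on $q$ for $0\le q\le Q$. The case $q=0$ is trivial. For the inductive step, assume $\varphi$ is an isomorphism in degrees $q'<q$, and write $I^*:=H^*(\IA_n,\bQ)^{\GL_n(\bZ)}$ and $R^*:=\mathrm{im}(\varphi)$. The inductive hypothesis gives $R^{q'}=I^{q'}$ for $q'<q$, and by Borel vanishing (valid since $q'<q\le Q$) the Koszul subcomplex $K$ then coincides with $E_2$ in all rows $q'<q$. Consider the quotient complex $E_2/K$: it vanishes in those rows, while its $(0,q)$-entry is exactly $Q^q:=I^q/R^q$. Any differential emanating from bidegree $(0,q)$ lands in bidegree $(r,q-r+1)$ with $r\ge 2$, hence in a row $q-r+1<q$ where $E_2/K$ is zero. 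Therefore every class in $Q^q$ is a permanent cycle surviving to $E_\infty^{0,q}$. But the latter vanishes in the stable range since $q>0$, so $Q^q=0$ and $\varphi$ is an isomorphism in degree $q$.

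The principal obstacle will be verifying rigorously that $K$ is genuinely closed under every page differential $d_r$, and that $d_r(y_{4i})=0$ for $r<4i+1$ in the multiplicative SS, both of which underpin the Koszul description above. These follow from the construction of the anti-transgression (which presupposes that $y_{4i}$ is a permanent cycle up to page $E_{4i+1}$) combined with the Leibniz rule, but they need careful page-by-page bookkeeping. A secondary subtlety is the behaviour of row $q=Q$ of $E_2$, where Borel vanishing is not assumed and the tensor decomposition may fail; however, only the $(0,Q)$-entry is used in the argument, and this is tautologically $I^Q$.
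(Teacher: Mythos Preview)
Your overall strategy matches the paper's: compare the Hochschild--Serre spectral sequence $E$ with a Koszul model $K=\exterior{*}\{x_{4i+1}\}\otimes\bQ[y_{4i}]$ and exploit that both converge to zero. However, the inductive step as written has a genuine gap.

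The problem is the claim that ``every class in $Q^q$ is a permanent cycle surviving to $E_\infty^{0,q}$.'' You justify this via a ``quotient complex $E_2/K$,'' but this is not a spectral sequence in any usable sense. Even if $K_2\to E_2$ is injective, the induced maps $K_r\to E_r$ need not be injective at later pages: for instance, a nonzero $d_2\colon E_2^{0,q}\to E_2^{2,q-1}$ kills elements in $E_3^{2,q-1}$ that remain nonzero in $K_3^{2,q-1}$ (the abstract Koszul $d_2$ vanishes), so no quotient spectral sequence exists. More directly, knowing that $d_r(c)$ lands in a row where $K_2$ and $E_2$ agree does not give $d_r(c)=0$; to correct $c$ by an element of $R^q$ you would need $d_r(c)\in d_r\bigl(f_r(K_r^{0,q})\bigr)$, which in turn requires $f_r\colon K_r\to E_r$ to remain surjective in that row at page $r$. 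This is not automatic, since differentials into rows $<q$ originate from rows $\ge q$ where the induction gives no information. A separate gap is injectivity of $\varphi$ in degree $q$: your assertion that the subalgebra $K\subset E_2$ ``is'' the free Koszul complex presupposes exactly this, and it is never argued. The paper (Proposition~\ref{prop:invariants-SSeq}) handles both issues by building the comparison map page by page via Lemmas~\ref{lemma:chainmap}--\ref{lemma:cokernel}, invoking Zeeman's comparison theorem for surjectivity of $f_2^{0,k}$, and proving a dedicated injectivity result (Lemma~\ref{lemma:injectivity-SSeq}).
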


 To do this, we will prove the following proposition, which is a very slight generalization of \cite[Theorem 6.4]{HabiroKatada2023stable}, improving the bound on the isomorphism of graded algebras by 1.

\begin{proposition}\label{prop:invariants-SSeq}
    Let $P,Q,N\in\{1,2,\ldots,\infty\}$ and $E$ be a graded commutative, multiplicative, first quadrant, cohomological spectral satisfying the following:\begin{enumerate}
        \item For $p\le P$ and $q\le Q$, the multiplication map $E_2^{p,0}\otimes E_2^{0,q}\to E_2^{p,q}$ is an isomorphism.
        \item  For $p+q\le N$, we have $E_\infty^{p,q}=0$.
        \item  There is a graded vector space $V=\bigoplus_{k\ge 2} V^k$ such that
        $$E_2^{*,0}\cong S(V),$$
        in degrees $*\le P$, where $S(V)$ denotes the free graded commutative algebra on $V$.
    \end{enumerate}
    Then the restriction of the the antitransgression map $\varphi_k:E_2^{k+1,0}\to E_2^{0,k}$ to $V^{k+1}$ is injective for all $1\le k\le 1+ \mathrm{min}(P-2,Q,N-2)$ and 
    $$E_2^{0,*}\cong S(V[-1])$$
    as graded algebras in degrees $*\le 1+\mathrm{min}(P-2,Q,N-2)$.
\end{proposition}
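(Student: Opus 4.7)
The plan is to prove both statements jointly by induction on $k$, for $1 \le k \le M := 1 + \min(P-2, Q, N-2)$, following the Borel-style argument in the spirit of Habiro--Katada's Theorem 6.4 and sharpening it by one degree. The inductive hypothesis at stage $k$ is that $\varphi_j|_{V^{j+1}}$ is injective for all $1 \le j < k$ and that the induced graded algebra map $\tilde\varphi : S(V[-1])_{<k} \to E_2^{0, <k}$ is an isomorphism.

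For the injectivity of $\varphi_k|_{V^{k+1}}$, unraveling the definition gives $\ker(\varphi_k) = \ker(\pi : E_2^{k+1,0} \twoheadrightarrow E_{k+1}^{k+1,0})$, which equals the sum, pulled back to $E_2^{k+1,0}$, of the images of $d_r : E_r^{k+1-r, r-1} \to E_r^{k+1, 0}$ for $2 \le r \le k$. Using condition (1), each class in $E_r^{k+1-r, r-1}$ is representable as a sum of products $a \cdot b$ with $a$ lifting from $E_r^{k+1-r, 0}$ and $b$ lifting from $E_r^{0, r-1}$. Since $d_r$ vanishes on $E_r^{*, 0}$, the Leibniz rule gives $d_r(a \cdot b) = \pm a \cdot d_r(b) \in E_r^{k+1-r, 0} \cdot E_r^{r, 0}$, a product of two positive-degree factors. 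By condition (3), applied at $k + 1 \le P$, this lies in the decomposable part of $S(V)^{k+1}$. Hence $V^{k+1} \cap \ker(\varphi_k) = 0$.

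For the extension to degree $k$ of the algebra isomorphism, I would handle surjectivity by iteratively canceling differentials. Given $y \in E_2^{0, k}$ and the least $r$ with $d_r(y) \ne 0$, condition (1) and the inductive hypothesis on degree $k - r + 1 < k$ let us write $d_r(y) = \sum_i u_i \cdot \tilde\varphi(w_i)$ with $u_i \in E_2^{r, 0}$ and $w_i \in S(V[-1])^{k-r+1}$. Picking $\tilde u_i \in E_r^{0, r-1}$ with $d_r(\tilde u_i) = u_i$ (which exists because $d_r : E_r^{0, r-1} \to E_r^{r, 0}$ is surjective, as $E_\infty^{r, 0} = 0$ for $r \le N$), the Leibniz-exact correction $\sum_i \tilde u_i \cdot \tilde\varphi(w_i)$ has the same $d_r$, so subtracting it pushes the first nonvanishing page index higher. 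After finitely many iterations, $y$ lies in $E_{k+1}^{0, k}$. There, the isomorphism $d_{k+1} : E_{k+1}^{0, k} \xrightarrow{\cong} E_{k+1}^{k+1, 0}$ from condition (2) identifies $y$ with a class in $S(V)^{k+1}$; decomposing this lift into its indecomposable part (in $V^{k+1}$, hit by $\varphi_k$) and its decomposable part (hit by products of previously anti-transgressed classes, by induction and Step A), we conclude. Injectivity in degree $k$ then follows from graded commutativity of $E_2^{0, *}$ combined with injectivity on indecomposables from Step A.

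The main obstacle is the bookkeeping needed to achieve the $+1$ improvement over Habiro--Katada. The injectivity step requires condition (1) only at bidegrees $(k+1-r, r-1)$ for $r \in [2, k]$, forcing $k - 1 \le P$ and $k - 1 \le Q$, while the surjectivity step additionally needs analogous control at $(r, k-r+1)$ together with the requirement $k + 1 \le N$ from condition (2). These combine to give the sharpened range $k \le M$. A secondary technical point is the careful tracking of signs under the Leibniz rule and the degree-one shift from $V$ to $V[-1]$, so that the output is compatible with the graded commutativity that makes $S(V[-1])$ a genuine free graded commutative algebra on generators of suitably shifted parities.
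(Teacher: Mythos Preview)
Your direct approach diverges substantially from the paper's. The paper builds explicit Koszul-type model spectral sequences ${}^kE$ with ${}^kE_2^{p,q}=S(V^k)^p\otimes S(V[-1]^{k-1})^q$, tensors them into ${}^{\le k}E$, and constructs a comparison morphism ${}^{\le k}E\to E$ by induction on $k$; the conclusion then comes from three lemmas about morphisms of spectral sequences together with Zeeman's comparison theorem, and the paper never argues directly with differentials inside $E$ alone.

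Your argument has a gap that the comparison method is designed to avoid. In both steps you tacitly assume that the multiplication $E_r^{p,0}\otimes E_r^{0,q}\to E_r^{p,q}$ is surjective for $r>2$, but condition (1) only asserts this on the $E_2$-page and surjectivity on later pages does not follow. In Step A you write an arbitrary $z\in E_r^{k+1-r,r-1}$ as a sum of products from the bottom row and left column of $E_r$, which is unjustified. In Step B the problem is sharper: even granting such a decomposition, your correction $\sum_i \tilde u_i\cdot\tilde\varphi(w_i)$ does not have the same $d_r$ as $y$, because the Leibniz rule contributes an extra term $\pm\sum_i \tilde u_i\cdot d_r(\tilde\varphi(w_i))$, and the anti-transgressed classes $\varphi_j(v)$ are not permanent cycles (indeed $d_{j+1}\varphi_j(v)=\pi_{j+1}(v)$), so $d_r$ need not vanish on $\tilde\varphi(w_i)$ and $\tilde\varphi(w_i)$ need not even survive to page $r$. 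Finally, your last sentence is not an argument: injectivity on indecomposables does not imply injectivity of an algebra map unless the target is already known to be free, which is what you are proving. In the paper, the model ${}^{\le k}E$ carries the product decomposition on every page by construction, Lemma~\ref{lemma:cokernel} yields injectivity of $\varphi_k|_{V^{k+1}}$, Zeeman's theorem gives surjectivity in degree $k$, and Lemma~\ref{lemma:injectivity-SSeq} upgrades this to injectivity.
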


The proof of this proposition is based on that of \cite[Theorem 6.4]{HabiroKatada2023stable} and is mostly the same, but with some modification to get the improved bound.

We will start by proving a series of lemmas. The first is surely a standard result in homological algebra, but we include an elementary proof for completeness:

\begin{lemma}\label{lemma:chainmap}
    Suppose $f:A\to B$ is a map of cochain complexes such that for some $i$, $f^i:A^i\to V^i$ is surjective and $f^{i+1}:A^{i+1}\to B^{i+1}$ is injective. Then $H(f)^i$ is surjective and $H(f)^{i+1}$ is injective.
\end{lemma}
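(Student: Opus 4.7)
The plan is a direct diagram chase using the two hypotheses (surjectivity in degree $i$ and injectivity in degree $i+1$) together with the fact that $f$ commutes with the differentials. No clever input is needed; the lemma is a standard piece of homological algebra.

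First I would prove surjectivity of $H(f)^i$. Take a cocycle $b \in B^i$, so $d_B b = 0$. By surjectivity of $f^i$, lift it to some $a \in A^i$ with $f^i(a) = b$. The element $d_A a \in A^{i+1}$ then satisfies $f^{i+1}(d_A a) = d_B f^i(a) = d_B b = 0$, and injectivity of $f^{i+1}$ forces $d_A a = 0$. Hence $a$ is a cocycle and $H(f)^i([a]) = [b]$.

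Next I would prove injectivity of $H(f)^{i+1}$. Let $a \in A^{i+1}$ be a cocycle with $H(f)^{i+1}([a]) = 0$, so $f^{i+1}(a) = d_B b$ for some $b \in B^i$. Using surjectivity of $f^i$, pick $a' \in A^i$ with $f^i(a') = b$. Then
\[
f^{i+1}(d_A a' - a) = d_B f^i(a') - f^{i+1}(a) = d_B b - d_B b = 0,
\]
so injectivity of $f^{i+1}$ gives $a = d_A a'$, i.e.\ $[a] = 0$ in $H^{i+1}(A)$.

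There is no genuine obstacle here; the only thing to be careful about is using each hypothesis in exactly the right spot (surjectivity of $f^i$ is needed to lift elements, injectivity of $f^{i+1}$ is needed to deduce vanishing from vanishing after applying $f$). Both parts of the conclusion follow from the single two-step chase above.
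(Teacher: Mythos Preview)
Your proof is correct and essentially identical to the paper's own argument: both are the same straightforward diagram chase, lifting cocycles via surjectivity of $f^i$ and using injectivity of $f^{i+1}$ to deduce vanishing.
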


\begin{proof}
    First, we introduce the notation $d_A^j:A^j\to A^{j+1}$ and $d_B^j:B^j\to B^{j+1}$ for the differentials in the chain complexes, for all $j$. We note that by the definition of a chain map, we have \begin{align}
        f^j(\ker(d_A^j))\subseteq \ker(d_B^j)\label{eq:row1},\\
        f^j(\mathrm{im}(d_A^j))\subseteq \mathrm{im}(d_B^{j})\label{eq:row2},
    \end{align}
for all $j$.

    Let us start by proving the surjectivity of $H(f)^i$. Let $w\in\ker(d_B^i)\subset B^i$ and $v\in A^i$ such that $f^i(v)=w$. Since $0=d_B^i(w)=d_B^i(f^i(v))=f^{i+1}(d_A^i(v))$ and $f^{i+1}$ is injective, we have $v\in\ker(d_A^i)$. Thus the restriction of $f^i$ to $\ker(f^i)$ is a surjection onto $\ker(d_B^i)$. By (\ref{eq:row2}) with $j=i$, it follows that $H(f)^i$ is surjective.

    Next, let us prove injectivity of $H(f)^{i+1}$. By (\ref{eq:row1}) for $j=i+1$ and the assumption, we have that the restriction of $f_2$ to $\ker(d_A^{i+1})$ is an injection into $\ker(d_B^{i+1})$. Suppose $v\in A^{i+1}$ is such that $f^{i+1}(v)\in\mathrm{im}(d_B^i)$, i.e.\ $f^{i+1}(v)=d_B^i(w)$ for some $w\in B^i$. By surjectivity of $f^i$, we have some $v'\in A^i$ such that $f^i(v')=w$, so $f^{i+1}(v)=d_B^i(f^i(v'))=f^{i+1}(d_A^i(v'))$. Since $f^{i+1}$ is injective, we have $v=d_A^i(v')$, so $v\in\mathrm{im}(d_A^i)$. Thus $H(f)^{i+1}$ is injective.
\end{proof}

\begin{lemma}\label{lemma:cokernel}
    Let $f:{^1}E\to {^2}E$ be a morphism of first quadrant cohomological spectral sequences, where ${^1}E$ is such that for some $k$, ${^1}E_{k+1}^{k+1,0}=0$. If $f_2^{p,k-p}$ is surjective for all $1\le p\le k-1$ and $f_2^{p,k+1-p}$ is injective for all $2\le p$, then
    $$0\to {^1}E_{2}^{k+1,0}\overset{f_2^{k+1,0}}{\longrightarrow}{^2}E_2^{k+1,0}\twoheadrightarrow {^2}E_{k+1}^{k+1,0}\to 0$$
    is a short exact sequence, where the right map is the edge homomorphism.
\end{lemma}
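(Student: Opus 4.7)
The proof breaks up into three steps. Firstly, the easy parts: the edge homomorphism $\pi \colon {}^2E_2^{k+1,0} \to {}^2E_{k+1}^{k+1,0}$ is the composite of the quotient maps ${}^2E_r^{k+1,0} \twoheadrightarrow {}^2E_{r+1}^{k+1,0}$ for $2 \le r \le k$, each of which is surjective because the outgoing $d_r$-differential from $(k+1, 0)$ lands with negative second coordinate and hence vanishes. Injectivity of $f_2^{k+1,0}$ is the given injectivity hypothesis with $p = k+1$, and its image lies in $\ker \pi$ by naturality of the edge, which factors through ${}^1E_{k+1}^{k+1,0} = 0$. It remains to establish the reverse inclusion $\ker \pi \subseteq \mathrm{im}(f_2^{k+1,0})$.

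The key technical input is to propagate the hypotheses through the pages. I claim by induction on $r$ that for each $2 \le r \le k+1$: $(a_r)$ $f_r^{p, k-p}$ is surjective for $1 \le p \le k - r + 1$, and $(b_r)$ $f_r^{p, k+1-p}$ is injective for $r \le p \le k+1$. The base case $r = 2$ is the hypothesis. For the inductive step, $f_r$ is a chain map with respect to $d_r$ on each page, and the relevant differentials carry positions on $p + q = k$ to positions on $p + q = k + 1$. Applying Lemma~\ref{lemma:chainmap} to propagate surjectivity at a point on line $k$ requires surjectivity there (from $(a_r)$) and injectivity at the image on line $k + 1$ (from $(b_r)$), while propagating injectivity is dual. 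A direct calculation shows the ranges $[1, k - r + 1]$ and $[r, k + 1]$ are precisely closed under these requirements: whenever Lemma~\ref{lemma:chainmap} is invoked, the needed positions either fall within the inductive range or outside the first quadrant (making the statement vacuous).

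With propagation in hand, the lemma follows from a downward induction on $r$ from $k+1$ to $2$, establishing the short exact sequence
$$0 \to {}^1E_r^{k+1,0} \xrightarrow{f_r^{k+1,0}} {}^2E_r^{k+1,0} \xrightarrow{\pi_r} {}^2E_{k+1}^{k+1,0} \to 0,$$
where $\pi_r$ denotes the edge from page $r$. The base $r = k + 1$ is trivial since ${}^1E_{k+1}^{k+1,0} = 0$. For the inductive step, let $x \in \ker \pi_r$; by the inductive hypothesis at page $r+1$, the image of $x$ in ${}^2E_{r+1}^{k+1,0}$ lies in $\mathrm{im}(f_{r+1}^{k+1,0})$, so one lifts a preimage along the quotient ${}^1E_r^{k+1,0} \twoheadrightarrow {}^1E_{r+1}^{k+1,0}$ to $z \in {}^1E_r^{k+1,0}$. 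Then $x - f_r(z)$ is annihilated by ${}^2E_r^{k+1,0} \twoheadrightarrow {}^2E_{r+1}^{k+1,0}$, so lies in $\mathrm{im}(d_r^{k+1-r, r-1}) \subseteq \mathrm{im}(f_r^{k+1,0})$, the inclusion following from the surjectivity of $f_r^{k+1-r, r-1}$ (from $(a_r)$) and the chain map property $f_r \circ d_r = d_r \circ f_r$. Injectivity of $f_r^{k+1,0}$ comes from $(b_r)$. Setting $r = 2$ gives the desired short exact sequence.

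The main obstacle is the bookkeeping in the propagation step: one must verify that the inductive $p$-ranges are closed under the endpoint requirements of Lemma~\ref{lemma:chainmap}, without ever requiring surjectivity or injectivity at unsupported positions such as $(0, k)$ or $(1, k)$. The specific ranges $[1, k - r + 1]$ and $[r, k + 1]$ are chosen precisely so that the target of each $d_r$ that arises stays within the injectivity range (or leaves the first quadrant), and dually for sources.
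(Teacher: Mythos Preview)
Your proof is correct and follows essentially the same approach as the paper: propagate the surjectivity/injectivity hypotheses through the pages via Lemma~\ref{lemma:chainmap}, then use this to compare $\mathrm{im}(f_r^{k+1,0})$ with the kernel of the edge map. The only cosmetic differences are that the paper keeps the fixed surjectivity range $1\le p\le k-1$ at every page (rather than your shrinking range $1\le p\le k-r+1$, which also works and is all that is needed here), and it phrases the final step as ``$\coker(f_{i+1}^{k+1,0})\cong\coker(f_i^{k+1,0})$ for each $i$'' rather than a downward induction on the short exact sequence; these are equivalent repackagings of the same computation.
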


\begin{proof}
    First, we will inductively prove that for each $i\ge 2$, the map $f_i^{p,k-p}$ is surjective for all $1\le p\le k-1$ and $f_i^{p,k+1-p}$ is injective for all $i\le p$. For $i=2$, this is the assumption of the lemma, so suppose the statement holds for some $i\ge 2$. This means that for $1\le p\le k-1$ we have a chain map
    \[\begin{tikzcd}
        \cdots\arrow[r]&{^1}E_{i}^{p,k-p}\arrow[r,"d_i^1"]\arrow[d,"f_i^{p,k-p}", two heads]& {^1}E_i^{p+i,k+1-(p+i)}\arrow[r]\arrow[d,"f_i^{p+i,k+1-(p+i)}", hook]&\cdots\\
        \cdots\arrow[r]&{^2}E_i^{p,k-p}\arrow[r,"d_i^2"]&{^2}E_i^{p+i,k+1-(p+i)}\arrow[r]&\cdots
    \end{tikzcd}\]
    Thus it follows from Lemma \ref{lemma:chainmap} that $f_{i+1}^{p,k-p}$ is surjective for $1\le p\le k-1$ and that $f_{i+1}^{p,k+1-(p)}$ is injective for $i\le p$ (note that injectivity for $p>k+1$ is trivial since these are first quadrant spectral sequences).
    
    Now let us use this to prove the statement of the lemma. By the first part of the proof, for each $2\le i\le k$ we have a commutative diagram
    \[\begin{tikzcd}
        \cdots\arrow[r]&{^1}E_i^{k+1-i,i-1}\arrow[r,"d_i^1"]\arrow[d,"f_i^{k+1-i,i-1}",two heads]&{^1}E_i^{k+1,0}\arrow[d,"f_i^{k+1,0}",hook]\arrow[r]&0\\
        \cdots\arrow[r]&{^2}E_i^{k+1-i,i-1}\arrow[r,"d_i^2"]& {^2}E_i^{k+1,0}\arrow[r]&0
    \end{tikzcd}\]
    It follows that $f_i^{k+1,0}(\mathrm{im}(d_i^1))=\mathrm{im}(d_i^2)$, so $\mathrm{coker}(f_{i+1}^{k+1,0})\cong\mathrm{coker}(f_{i}^{k+1,0})$. However, by assumption ${^1}E_{k+1}^{k+1,0}=0$, so ${^2}E_{k+1}^{k+1,0}\cong\mathrm{coker}(f_{k+1}^{k+1,0})\cong\mathrm{coker}(f_2^{k+1,0})$, which finishes the proof.
\end{proof}

\begin{lemma}\label{lemma:injectivity-SSeq}
    Let $f:{^1}E\to {^2}E$ be a morphism of first quadrant cohomological spectral sequences where there is a $k\ge 1$ such that ${^1}E_{k+2}^{0,k}=0$. If $f_2^{p,k-p}$ is surjective for all $p\le k-1$ and $f_2^{p,k+1-p}$ is injective for all $p\ge 2$, then $f_2^{0,k}$ is injective. 
    
\end{lemma}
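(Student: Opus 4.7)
The plan is to mimic the strategy of Lemma \ref{lemma:cokernel}: propagate the given injectivity and surjectivity through the pages of the two spectral sequences using Lemma \ref{lemma:chainmap}, and then exploit the assumption ${^1}E_{k+2}^{0,k}=0$ to squeeze the kernel of $f_2^{0,k}$ down to zero. A first preparatory step is to replay the induction in the proof of Lemma \ref{lemma:cokernel} verbatim, applying Lemma \ref{lemma:chainmap} to the pair
$$
{^1}E_j^{p,k-p}\xrightarrow{d_j}{^1}E_j^{p+j,k+1-p-j}
$$
at each page $j\ge 2$ for all $p\ge 1$. This yields that for every $j\ge 2$, $f_j^{p,k-p}$ is surjective for all $1\le p\le k-1$ and $f_j^{p,k+1-p}$ is injective for all $p\ge j$. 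In particular, taking $p=j=i$ gives injectivity of $f_i^{i,k-i+1}$ for every $2\le i\le k+1$, which is the fact I will rely on.

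Second, I would run a direct diagonal argument at $(0,k)$. Because the position $(0,k)$ admits no incoming differentials in a first quadrant cohomological spectral sequence, on each page it is just the kernel of the outgoing differential, so the entries at $(0,k)$ assemble into a descending chain of subspaces
$$
{^1}E_2^{0,k}\supseteq {^1}E_3^{0,k}\supseteq\cdots\supseteq {^1}E_{k+2}^{0,k}=0,
$$
and similarly for ${^2}E$. Given $x\in {^1}E_2^{0,k}$ with $f_2^{0,k}(x)=0$, assume for contradiction that $x\ne 0$. Then there is a smallest $i$ with $2\le i\le k+1$ such that $x\in {^1}E_i^{0,k}$ and $d_i(x)\ne 0$ in ${^1}E_i^{i,k-i+1}$. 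Using that ${^2}E_i^{0,k}\hookrightarrow {^2}E_2^{0,k}$, the induced map satisfies $f_i^{0,k}(x)=f_2^{0,k}(x)=0$, and by naturality of the spectral sequence map,
$$
f_i^{i,k-i+1}(d_i(x))=d_i(f_i^{0,k}(x))=0.
$$
The injectivity of $f_i^{i,k-i+1}$ supplied by the first step then forces $d_i(x)=0$, contradicting the choice of $i$, and so $x=0$.

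The main point requiring care, rather than a genuine obstacle, is matching the index ranges in the two steps: I must ensure that injectivity of $f_i^{i,k-i+1}$ is available at every page $i$ in the entire range $2\le i\le k+1$ (so that whichever page $x$ dies on, the naturality contradiction can be triggered), and I must correctly identify the image of $x$ under $f_i^{0,k}$ with $f_2^{0,k}(x)$ via the chain of subspace inclusions on the ${^2}E$-side. Both of these are straightforward consequences of the first-quadrant hypothesis together with the inductive statement of the first step, so no new ingredient beyond Lemma \ref{lemma:chainmap} should be needed.
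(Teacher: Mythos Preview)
Your proof is correct and follows essentially the same two-step structure as the paper's: first propagate the surjectivity/injectivity through the pages via Lemma \ref{lemma:chainmap}, then run a filtration argument at the $(0,k)$-column using the vanishing ${^1}E_{k+2}^{0,k}=0$. Your presentation has a small economy --- you reuse the induction from Lemma \ref{lemma:cokernel} verbatim, which only yields injectivity of $f_i^{p,k+1-p}$ for $p\ge i$ rather than the stronger $p\ge 2$ the paper establishes, but as you observe this is exactly what is needed since only $f_i^{i,k-i+1}$ is used; and your direct contradiction argument in the second step is just an unwinding of the paper's downward induction on $i$.
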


\begin{proof}
    We start by inductively proving that for $i\ge 2$, $f_i^{p,k-p}$ is surjective for all $p\le k-1$ and $f_i^{p,k+1-p}$ is injective for all $p\ge 2$. For $i=2$ this is the assumption of the lemma, so let us assume it is true for some $i\ge 2$. For any $2-i\le p\le k-1$, we then have a chain map
     \[\begin{tikzcd}
        \cdots\arrow[r]&{^1}E_{i}^{p,k-p}\arrow[r,"d_i^1"]\arrow[d,"f_i^{p,k-p}", two heads]& {^1}E_i^{p+i,k+1-(p+i)}\arrow[r]\arrow[d,"f_i^{p+i,k+1-(p+i)}", hook]&\cdots\\
        \cdots\arrow[r]&{^2}E_i^{p,k-p}\arrow[r,"d_i^2"]&{^2}E_i^{p+i,k+1-(p+i)}\arrow[r]&\cdots
    \end{tikzcd}\]
    so it follows by Lemma \ref{lemma:chainmap} that for $2-i\le p\le k-1$, $f_{i+1}^{p,k-p}$ is surjective and $f_{i+1}^{p+i,k+1-(p+i)}$ is injective. However, for $p\le 2-i\le 0$, $f_{i+1}^{p,k-p}$ is trivially surjective. If $p>k-1$, then $p+i>k-1+i\ge k+1$, so $k+1-(p-i)<0$, which means that $f_{i+1}^{p+i,k+1-(p+i)}$ is injective for trivial reasons as well, and thus $f_{i+1}^{p,k+1-p}$ is injective for all $p\ge 2$.

    Now let us use this to prove the statement of the lemma. We have filtrations
    \begin{align*}
        0= {^1}E_{k+2}^{0,k}\subseteq{^1}E_{k+1}^{0,k}\subseteq\cdots\subseteq {^1}E_{3}^{0,k}\subseteq {^1}E_2^{0,k}\\
        \cdots\subseteq {^2}E_{k+2}^{0,k}\subseteq{^2}E_{k+1}^{0,k}\subseteq\cdots\subseteq {^2}E_{3}^{0,k}\subseteq {^2}E_2^{0,k}
    \end{align*}
    and the map $f_2^{0,k}$ is compatible with these filtrations, since $f_i^{0,k}|_{{^1}E_{i+1}^{0,k}}=f_{i+1}^{0,k}$. Inductively, we will prove that for each $i\ge 2$, $f_i^{0,k}$ is injective. For $i\ge k+2$, this is trivially satisfied, so assume it is true for some $2<i\le k+2$. By the first part, we have a commutative diagram
    \[\begin{tikzcd}
        {^1}E_{i-1}^{0,k}\arrow[r,"d_{i-1}^1"]\arrow[d, "f_{i-1}^{0,k}"]&{^1}E_{i-1}^{i-1,k+1-(i-1)}\arrow[d,"f_{i-1}^{i-1,k+1-(i-1)}",hook]\\
        {^2}E_{i-1}^{0,k}\arrow[r,"d_{i-1}^2"]&{^2}E_{i-1}^{i-1,k+1-(i-1)}
    \end{tikzcd}\]
    By the commutative diagram we have $f_{i-1}^{0,k}(\ker(d_{i-1}^1))\subseteq \ker(d_{i-1}^2)$, so we get an induced commutative diagram
    \[\begin{tikzcd}
        {^1}E_{i-1}^{0,k}/\ker(d_{i-1}^1)\arrow[r,"\overline{d}_{i-1}^1",hook]\arrow[d, "\overline{f}_{i-1}^{0,k}"]&{^1}E_{i-1}^{i-1,k+1-(i-1)}\arrow[d,"f_{i-1}^{i-1,k+1-(i-1)}",hook]\\
        {^2}E_{i-1}^{0,k}/\ker(d_{i-1}^2)\arrow[r,"\overline{d}_{i-1}^2", hook]&{^2}E_{i-1}^{i-1,k+1-(i-1)}
    \end{tikzcd}\]
    where the overlined maps are those induced on the quotients. It follows that $\overline{f}_{i-1}^{0,k}$ is injective. By the inductive assumption, the restriction of $f_{i-1}^{0,k}$ to the kernel of $d_{i-1}^1$ is also injective, so it follows that $f_{i-1}^{0,k}$ is injective as well. 
\end{proof}

We are now equipped to prove the proposition.

\begin{proof}[Proof of Proposition \ref{prop:invariants-SSeq}]

    The beginning of the proof is just the same as the proof of \cite[Theorem 6.4]{HabiroKatada2023stable}, but let us recall it for completeness, and so that the end of the proof is easier to follow. The propositions only differs in the case where at least one of $P,Q$ and $N$ is finite, so this is the case we will prove. For this purpose, let $K=2+\mathrm{min}(P-2,Q,N-2)$. 
    
    For $2\le k\le K$, we define a graded commutative, first quadrant, multiplicative spectral sequence ${^k}{E}$ by
    $$E_2^{p,q}:=S(V^k)^p\otimes S(V[-1]^{k-1})^q.$$
    with the only non-zero differential being on the $E_k$-page, where $d_k^{0,k-1}=\id_{V^k}:{^k}{E}_k^{0,k-1}\to {^k}{E}_k^{k,0}$. The other non-zero differentials are determined by the Leibniz rule and for each $i\ge 1$ we have an exact sequence
    $$0\to S^i(V[-1]^{k-1})\to S^1(V^k)\otimes S^{i-1}(V[-1]^{k-1})\to\cdots\to S^i(V^k)\to 0.$$
    For $p+q>0$ and all $j>k$ we thus get ${^k}{E}_j^{p,q}=0$. For each $2\le k\le K$ we now define a new spectral sequence 
    $${^{\le k}}{E}:=\bigotimes_{2\le i\le k} {^i}{E}.$$

    Now the proof diverges slightly from that of \cite[Theorem 6.4]{HabiroKatada2023stable}, although it is still mostly the same. By induction we will prove that for each $2\le k\le K$, the restriction of $\varphi_{k-1}:E_2^{k,0}\to E_2^{0,k-1}$ to $V_k$ is injective and that there are morphisms
    $${^k}f:{^{\le k}}{E}\to E$$
    such that ${^k}f_2^{p,0}$ is an isomorphism for $0\le p\le k$, ${^k}f_2^{0,q}$ is an isomorphism for $0\le q\le k-1$ and such that ${^{k+1}}f|_{^{\le k}E}={^k} f$ for each $2\le k\le K-1$.

    For $k=2$, we have that $d_2^{0,1}:E_2^{0,1}\to E_2^{2,0}$ is an isomorphism by the assumption (2), and we have $E_2^{2,0}\cong V^2$, by assumption (3), so $\varphi_1$ is actually an isomorphism and in particular injective. Thus there is a morphism ${^2}f:{^{\le 2}}E={^2}E\to E$ determined entirely by ${^2}f_2^{2,0}=\id_{V^2}$ and ${^2}f_2^{0,1}=\id_{V[-1]^1}$, both of which are isomorphisms.

    Now let us assume the statement holds for some $2\le k<K$. By this assumption and condition (1), it follows that ${^k}f_{2}^{p,q}$ is an isomorphism for $0\le p\le k$ and $0\le q\le k-1$. In particular, ${^k}f_2^{p,k-p}$ is an isomorphism for all $1\le p\le k-1$ and ${^k}f_2^{p,k+1-p}$ is an isomorphism for all $2\le p\le k$, and injective for $p\ge k+1$ (since ${^k}f_2^{*,0}$ is injective in all degrees). Since the cokernel of the injective map ${^k}f_2^{k+1,0}:{^{\le k}}E_2^{k+1,0}\to E_2^{k+1,0}$ is precisely $V^{k+1}$ it follows by Lemma \ref{lemma:cokernel} that the restriction of the edge map $E_2^{k+1,0}\to E_{k+1}^{k+1,0}$ to $V^{k+1}$ is an isomorphism. Thus the restriction of the anti-transgression map $\varphi_k:E_2^{k+1,0}\to E_2^{0,k}$ is injective. 
    
    In the same way as in the base case, we may thus extend ${^k}f$ to a map ${^{k+1}}f:{^{k+1}}E\to E$ such that ${^{k+1}}f_{2}^{p,0}$ is an isomorphism for $0\le p\le k+1$. Since ${^{k+1}}f_{\infty}^{p,q}$ is an isomorphism for all $p+q\le N$, by condition (2), it follows by Zeeman's comparison theorem \cite[Theorem 2]{ZeemanComparison} that ${^{k+1}}f_2^{0,q}$ is an isomorphism for all $q\le k-1$ and surjective for $q=k$. By condition (1), we once again have that ${^{k+1}}f_2^{p,q}$ is an isomorphism for $p\le k+1$ and $q\le k-1$. In particular, ${^{k+1}}f_2^{p,k+1-p}$ is injective for all $p\ge 2$ and since ${^{k+1}}f_2^{0,k}$ is surjective, we get in particular that ${^{k+1}}f_2^{p,k-p}$ is surjective for all $p\le k$. Thus it follows by Lemma \ref{lemma:injectivity-SSeq} that ${^{k+1}}f_2^{0,k}$ is also injective and thus an isomorphism. This finishes the induction and for $k=K-1$, we get the claim of the proposition, for $K$ finite.     
\end{proof}

Let us now use this to prove Theorem \ref{thm:GL-invariants}:

\begin{proof}[Proof of Theorem \ref{thm:GL-invariants}]
    We need to verify that the Hochschild-Serre spectral sequence associated to the short exact sequence
    $$1\to\IA_n\to\Aut(F_n)\to\GL_n(\bZ)\to 1$$
    satisfies the hypotheses of Proposition \ref{prop:invariants-SSeq}. We already saw in the paragraph preceeding the theorem that (2) and (3) are satisfied. We have $E_2^{p,0}\cong H^p(\GL_n(\bZ),\bQ)$, whereas $E_2^{0,q}\cong  H^q(\IA_n,\bQ)^{\GL_n(\bZ)}$, so (1) is precisely Borel vanishing, which we have assumed to hold. 
\end{proof}

\subsection{The $\GL_n(\bZ)$-invariants of $H^*(\IA_n,\bQ)\otimes K(S)$}

In order to apply Proposition \ref{prop:wBr1} to compute stable cohomology groups of $\IA_n$, we need to compute the values of the functor $[H^*(\IA_n,\bQ)\otimes K]^{\GL_n(\bZ)}:\wBr_n\to\bQ\text{-}\mathrm{mod}$. For any pair of finite sets $S=(S_1,S_2)$, the inclusion $\IA_n\hookrightarrow \Aut(F_n)$ induces a map
$$H^*(\Aut(F_n),K(S))\to H^*(\IA_n,K(S))\cong H^*(\IA_n,\bQ)\otimes K(S),$$
since $\IA_n$ by definition is the kernel of the action on $K(S)$. Furthermore, since the $\GL_n(\bZ)$-action on the cohomology of $\IA_n$ is induced by the conjugation action of $\Aut(F_n)$ on itself, this is a $\GL_n(\bZ)$-equivariant map, with $H^*(\Aut(F_n),K(S))$ considered as a trivial reprsentation. Thus it follows that the image lands in the $\GL_n(\bZ)$-invariants of $H^*(\IA_n,\bQ)\otimes K(S)$. Let us denote this map
$$\iota_S:H^*(\Aut(F_n),K(S))\to [H^*(\IA_n,\bQ)\otimes K(S)]^{\GL_n(\bZ)}.$$
Tensoring with $H^*(\IA_n,\bQ)^{\GL_n(\bZ)}$ and taking the cup product we get a composite map
$$\xi_S:H^*(\Aut(F_n),K(S))\otimes H^*(\IA_n,\bQ)^{\GL_n(\bZ)}\to [H^*(\IA_n,\bQ)\otimes K(S)]^{\GL_n(\bZ)},$$
The following is a consequence of \cite[Lemma 7.7]{HabiroKatada2023stable}:

\begin{theorem}
    Suppose $Q\ge 0$ is such that $H^*(\IA_n,\bQ)$ satisifies Borel vanishing in degree $*< Q$. For any pair of finite sets $S=(S_1,S_2)$ there is then a stable range $n\gg *$ in which the map $\xi_S$ is an isomorphism in all degrees $\le Q$.
\end{theorem}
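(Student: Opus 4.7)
The plan is to run a spectral sequence argument parallel to the proof of Theorem \ref{thm:GL-invariants}, now with coefficients in $K(S)$. Consider the Hochschild-Serre spectral sequence
$$E_2^{p,q}(S) = H^p(\GL_n(\bZ), H^q(\IA_n, \bQ) \otimes K(S)) \Rightarrow H^{p+q}(\Aut(F_n), K(S)),$$
where we have used that $\IA_n$ acts trivially on $K(S)$. This spectral sequence carries a natural module structure over the trivial-coefficient spectral sequence $E_*(\varnothing)$ analyzed in Section \ref{subsec:invariantstrivialcoeffs}, and the cup product map $\xi_S$ will be recovered from the bottom edge homomorphism $\iota_S$ landing in $E_2^{0,*}(S)$ together with the action of the antitransgressed classes $y_{4i}$.

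The first step is to identify the $E_2$-page via Borel vanishing. Following the argument in \cite[Lemma 7.7]{HabiroKatada2023stable}, the hypothesis that $H^q(\IA_n, \bQ)$ satisfies Borel vanishing for $q<Q$, combined with the fact that $K(S)$ is finite-dimensional and algebraic, implies that the tensor product $H^q(\IA_n, \bQ) \otimes K(S)$ satisfies Borel vanishing in the same range. Consequently, in a stable range of $n$,
$$E_2^{p,q}(S) \cong H^p(\GL_n(\bZ), \bQ) \otimes [H^q(\IA_n, \bQ) \otimes K(S)]^{\GL_n(\bZ)} \quad \text{for } q < Q,$$
while the abutment is known stably from Theorem \ref{thm:twistedcoeffs}.

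The second step is a multiplicative spectral sequence comparison. The $E_*(\varnothing)$-module structure on $E_*(S)$ lifts the antitransgression classes $y_{4i}$ to operators on $E_*(S)$, and under the identification of the $E_2$-page above, the map $\xi_S$ corresponds to the map $E_2^{*,0}(S) \otimes E_2^{0,*}(\varnothing) \to E_2^{0,*}(S)$ induced by this action combined with the edge map $\iota_S$. An appropriate module-theoretic adaptation of Proposition \ref{prop:invariants-SSeq}, in which the free algebra factor $S(V)$ is replaced by the module $S(V) \otimes H^*(\Aut(F_n), K(S))$ over the trivial-coefficient polynomial algebra, then yields the isomorphism $\xi_S$ in degrees $*\le Q$.

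The main technical obstacle is adapting the inductive comparison argument of Proposition \ref{prop:invariants-SSeq} to the module-theoretic setting: at each stage one must check that the intermediate spectral sequences ${^{\le k}}{E}$ admit module-compatible maps into $E_*(S)$, and Zeeman's comparison theorem must be applied to morphisms of module spectral sequences rather than algebra spectral sequences. This bookkeeping is routine but notationally involved, and it is exactly at this point that the improvement by $1$ in the degree range relative to \cite[Lemma 7.7]{HabiroKatada2023stable} is obtained, mirroring the corresponding sharpening in the trivial-coefficient case established in Theorem \ref{thm:GL-invariants}.
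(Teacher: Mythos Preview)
Your overall strategy is correct and is essentially what the paper does by citing \cite[Lemma 7.7]{HabiroKatada2023stable}: run the Hochschild--Serre spectral sequence with $K(S)$-coefficients as a module over the trivial-coefficient one, and carry out a comparison argument parallel to Proposition \ref{prop:invariants-SSeq}.

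There is, however, one simplification in the paper that you omit and that removes most of the ``module-theoretic bookkeeping'' you flag as the main difficulty. The paper first decomposes $K(S)$ into irreducible $\GL_n(\bZ)$-summands $V_{\lambda,\mu}$. For each nontrivial irreducible, Borel vanishing gives $E_2^{*,0}(V_{\lambda,\mu})=H^*(\GL_n(\bZ),V_{\lambda,\mu})=0$ stably, so the bottom row of the module spectral sequence vanishes and the comparison becomes substantially simpler than the full algebra version; one does not need to rebuild the tower of auxiliary spectral sequences ${^{\le k}}E$. The trivial summands of $K(S)$ reduce directly to the $S=\varnothing$ case already handled. Working with $K(S)$ as a whole, as you propose, also works but is more involved.

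Two imprecisions are worth correcting. First, your sentence ``the map $\xi_S$ corresponds to the map $E_2^{*,0}(S)\otimes E_2^{0,*}(\varnothing)\to E_2^{0,*}(S)$'' misidentifies the source: the first tensor factor of $\xi_S$ is the abutment $H^*(\Aut(F_n),K(S))$, which enters $E_2^{0,*}(S)$ via the edge homomorphism $\iota_S$, not the bottom row $E_2^{*,0}(S)$. Second, the phrase ``the free algebra factor $S(V)$ is replaced by the module $S(V)\otimes H^*(\Aut(F_n),K(S))$'' conflates the abutment with the $E_2$-page: the bottom row of $E_*(S)$ is $S(V)\otimes K(S)^{\GL_n(\bZ)}$, while $H^*(\Aut(F_n),K(S))$ is the target of convergence. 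Neither point is fatal to the argument, but both should be fixed before the sketch can be made precise.
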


\begin{proof}
    The proof is almost identical to that of \cite[Lemma 7.7]{HabiroKatada2023stable}, by noting that $K(S)$ decomposes into irreducible representations $V_{\lambda,\mu}$ with $|\lambda|=|S_1|$ and $|\mu|=|S_2|$ (denoted $V_{\overline{\lambda}}$ by Habiro and Katada). Keeping track of degrees in that proof, one notes that it is sufficient for Borel vanishing to be satisfied in degrees $*<Q$ to conclude an isomorphism in degrees $\le Q$.
\end{proof}

Combining this with Theorem \ref{thm:twistedcoeffs} and \ref{thm:GL-invariants}, we can conclude the following:

\begin{corollary}\label{corollary:invariants}
    For any pair of finite sets $S=(S_1,S_2)$, there is a linear map
    $$\Phi_S^t:\PP(S)\otimes\det(S)\otimes\bQ[y_4,y_8,\ldots]\to [H^*(\IA_n,\bQ)\otimes K(S)]^{\GL_n(\bZ)}.$$
    If $Q\ge 0$ is such that $H^*(\IA_n,\bQ)$ satisfies Borel vanishing in degrees $*<Q$, there is a range $n\gg *$ such that $\Phi_S^t$ is an isomorphism in degrees $\le Q$.
\end{corollary}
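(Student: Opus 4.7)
The plan is to obtain $\Phi_S^t$ as a composition of three maps that have each been set up earlier in the paper, and to argue that under the Borel vanishing hypothesis each of them becomes an isomorphism in a suitable stable range of degrees.

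First I would take the map $\Phi_S:\PP(S)\otimes\det(S)\to H^*(\Aut(F_n),K(S))$ constructed in Section \ref{sec:twistedcoeffs}. By the corollary following Proposition \ref{prop:kappa-welldefined} (which rests on Theorem \ref{thm:twistedcoeffs}), this is an isomorphism in the range $2*\le n-|S_1|-|S_2|-3$. Next I would use the algebra map $\varphi:\bQ[y_4,y_8,\ldots]\to H^*(\IA_n,\bQ)^{\GL_n(\bZ)}$ built from the anti-transgression construction of Section \ref{subsec:invariantstrivialcoeffs}; under the assumed Borel vanishing in degrees $*<Q$, Theorem \ref{thm:GL-invariants} says $\varphi$ is an isomorphism in degrees $*\le Q$ for $n\gg *$. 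Finally I would compose with the cup product map
\[
\xi_S:H^*(\Aut(F_n),K(S))\otimes H^*(\IA_n,\bQ)^{\GL_n(\bZ)}\to [H^*(\IA_n,\bQ)\otimes K(S)]^{\GL_n(\bZ)},
\]
which by the theorem immediately preceding the corollary is an isomorphism in degrees $\le Q$ in a stable range, again under the Borel vanishing hypothesis.

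With these ingredients in hand, I would \emph{define} $\Phi_S^t$ as the composite
\[
\PP(S)\otimes\det(S)\otimes\bQ[y_4,y_8,\ldots]\xrightarrow{\Phi_S\otimes\varphi} H^*(\Aut(F_n),K(S))\otimes H^*(\IA_n,\bQ)^{\GL_n(\bZ)}\xrightarrow{\xi_S}[H^*(\IA_n,\bQ)\otimes K(S)]^{\GL_n(\bZ)}.
\]
The first arrow exists unconditionally, so this gives an unconditional definition of $\Phi_S^t$. Under the Borel vanishing assumption, each of the two arrows is an isomorphism in degrees $\le Q$ for $n$ large enough, so the composite is as well. The only bookkeeping is to intersect the three stable ranges: the ranges from $\Phi_S$, from $\varphi$, and from $\xi_S$ each have the form $n\gg *$ (depending linearly on $|S|$ and $*$), so their intersection is again of the form $n\gg *$.

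The only step that really requires any work has already been done for us: the compatibility between the ``trivial coefficients'' part $\varphi$ and the ``twisted coefficients'' part $\Phi_S$ is exactly the content of the cup product isomorphism $\xi_S$, and that in turn is \cite[Lemma 7.7]{HabiroKatada2023stable}. So I do not expect any genuine obstacle here beyond tracking degrees; the corollary is essentially a repackaging of Theorems \ref{thm:twistedcoeffs} and \ref{thm:GL-invariants} together with the $\xi_S$-isomorphism.
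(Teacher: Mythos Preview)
Your proposal is correct and matches the paper's approach exactly: the paper states the corollary immediately after the $\xi_S$-theorem with the preamble ``Combining this with Theorem \ref{thm:twistedcoeffs} and \ref{thm:GL-invariants}, we can conclude the following,'' and your composite $\xi_S\circ(\Phi_S\otimes\varphi)$ is precisely that combination made explicit. The only point worth remarking is the graded-tensor bookkeeping you allude to: since $\PP(S)\otimes\det(S)$ is concentrated in the single degree $|S_1|-|S_2|$, checking that $\Phi_S\otimes\varphi$ is an isomorphism in degree $d\le Q$ amounts to knowing that $H^i(\Aut(F_n),K(S))$ vanishes stably for $i\neq |S_1|-|S_2|$ and that $\varphi$ is an isomorphism in degree $d-(|S_1|-|S_2|)\le Q$, both of which you have.
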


Let us now apply the machinery of Section \ref{sec:walledBrauer} to prove Theorem \ref{theoremB}.

\subsection{Proof of Theorem \ref{theoremB}} 

Let us start by recalling the categorical version of the main theorem:

\theoremB*

\begin{proof}
We start by noting that by adjunction, for any pair of finite sets $S=(S_1,S_2)$, the map
$$\Phi_S^t:i_*(\PP')(S)\otimes\det(S)\otimes\bQ[y_4,y_8,\ldots]\to [H^*(\IA_n,\bQ)\otimes K(S)]^{\GL_n(\bZ)}$$
defines a $\GL_n(\bZ)$-equivariant map
$$\Psi_S^t:K^\vee(S)\otimes \PP'(S)\otimes\det(S)\otimes\bQ[y_4,y_8,\ldots]\to H^*(\IA_n,\bQ).$$
The maps $\Phi_S^t$ define the components of a natural transformation $\Phi^t$ of functors $\wBr_n\to\Gr(\bQ\dashmod)$, so $\Psi_S^t$ define the components of a natural transformation $\Psi^t$ of functors $\wBr_n\to\Gr(\Rep(\GL_n(\bZ)))$. This means that they define a graded ring homomorphism
$$i^*(K^\vee)\otimes^{\dwBr}(\PP'\otimes\det)\otimes\bQ[y_4,y_8,\ldots]\to H^*(\IA_n,\bQ)$$
By Corollary \ref{corollary:invariants} above, the natural transformation $\Phi^t$ is a natural isomorphism is a range of degrees, so by Proposition \ref{prop:wBr1}, this is an isomorphism of graded rings onto $H^*(\IA_n,\bQ)^\alg$ in the same range of degrees.
\end{proof}

\section{Ring structure}\label{sec:ring-structure}

In this section, we prove Theorem $\ref{theoremA}$ by describing the ring
$$R:=i^*(K^\vee)\otimes^{\dwBr}(\PP'\otimes\det)$$
in more explicit terms, following the example of \cite[Section 5]{KupersRandal-Williams}. Similarly to there, we note that by the universal property of coends, there is for any $S=(S_1,S_2)\in\dwBr$ a $\Sigma_{S_1}\times\Sigma_{S_2}\times\GL_(\bZ)$-equivariant map
$$K^\vee(S)\otimes \PP'(S)\otimes\det(S)\to R.$$
In particular, if $S=([k],[1])$, the trivial labeled partition $\{[k],1\}$ defines a map
$$H^\vee(n)^{\otimes k}\otimes H(n)\otimes\det(\bQ^k)\to R.$$
Similarly, if $S=([k],\varnothing)$, we get a map
$$H^\vee(n)^{\otimes k}\otimes\det(\bQ^k)\to R.$$
Forgetting the structure as $\Sigma_k$-representations, we thus get linear maps
\begin{align*}
    \kappa_1:H^\vee(n)^{\otimes k}\otimes H(n)&\to R\\
    \kappa_0:H^{\vee}(n)^{\otimes k}&\to R,
\end{align*}
where the $\Sigma_k$-equivariance can be recorded by the relations\begin{align}\label{eq:symmetryrelations}
    \kappa_1(f_{\sigma(1)}\otimes f_{\sigma(2)}\otimes\cdots\otimes f_{\sigma(k)}\otimes v)&=\sgn(\sigma)\kappa_1(f_1\otimes f_2\otimes \cdots \otimes f_k\otimes v),\\
    \kappa_0(f_{\sigma(1)}\otimes f_{\sigma(2)}\otimes\cdots\otimes f_{\sigma(k)})&=\sgn(\sigma)\kappa_0(f_1\otimes f_2\otimes \cdots \otimes f_k).
\end{align}

Let $S=(\{s_1,s_2,\ldots, s_p\},\varnothing)$ and $T=(\{t_1,t_2,\ldots,t_q\},\varnothing)$. Furthermore, let $S':=(S_1,\{s\})$ and $T':=(\{t\}\sqcup T_1,\varnothing)$. For $\bar{f}\in H^\vee(n)^{\otimes S_1}$ and $\bar{g}\in H^\vee(n)^{\otimes T_1}$. We then get that the element
$$\sum_{i=1}^n (\bar{f}\otimes e_i\otimes e^\#_i\otimes \bar{g})\otimes\{S',T'\}\otimes(s_1\wedge\cdots\wedge s_p\wedge t\wedge t_1\wedge\cdots\wedge t_q\otimes s),$$
in $K^\vee_{S'\sqcup T'}(n)\otimes \PP'(S'\sqcup T')\otimes\det(S'\sqcup T')$, and the element
$$(\bar{f}\otimes\bar{g})\otimes\{S,T\}\otimes(s_1\wedge\cdots\wedge s_p\wedge t_1\wedge\cdots\wedge t_q),$$
in $K^\vee_{S\sqcup t}(n)\otimes\PP'(S\sqcup T)\otimes\det(S\sqcup T)$, are identified in the coend $R$. This thus gives us the relation
\begin{align}\label{eq:contraction-rel1}
    \sum_{i=1}^n\kappa_1(\bar{f}\otimes e_i)\otimes\kappa_0( e^\#_i\otimes \bar{g})=\kappa_0(\bar{f}\otimes \bar{g}) 
\end{align}
in $R$. We can also note that the same calculation holds if we replace $\bar{g}$ by $\bar{g}\otimes v\in H^{\vee}(n)^{\otimes T_1}\otimes H(n)$, so we also have the relation
\begin{align}\label{eq:contraction-rel2}
    \sum_{i=1}^n\kappa_1(\bar{f}\otimes e_i)\otimes\kappa_1( e^\#_i\otimes\bar{g}\otimes v)=\kappa_1(\bar{f}\otimes\bar{g}\otimes v).
\end{align}
Similarly, if we let $S$ be as above and $T=(\{t_1\},\{t_2\})$ we have that the element
$$\sum_{i=1}^n(\bar{f}\otimes  e^\#_i\otimes e_i)\otimes\{(S_1\sqcup\{t_1\},t_2)\}\otimes(s_1\wedge \cdots\wedge s_p\wedge t_1)\otimes (t_2)$$
in $K_{S\sqcup T}^\vee(n)\otimes\PP'(S\sqcup T)\otimes\det(S\sqcup T)$ and the element
$$\bar{f}\otimes \{S\}\otimes(s_1\wedge\cdots\wedge s_p)$$
in $K_S^\vee(n)\otimes\PP'(S)\otimes\det(S)$ are identified in the coend $R$, which gives us the relation
\begin{align}\label{eq:contraction-rel3}
    \kappa_0(f)=\sum_{i=1}^n\kappa_1(f\otimes e^\#_i\otimes e_i).
\end{align}
By iteratively applying the relation (\ref{eq:contraction-rel2}), we get that
\begin{align*}
    \kappa_1(f_1\otimes f_2\otimes\cdots\otimes f_k\otimes v) &= \sum_{i=1}^n \kappa_1(f_1\otimes f_2\otimes\cdots\otimes f_{k-1}\otimes e_i)\otimes\kappa_1( e^\#_i\otimes f_k\otimes v)\\
    &=\cdots\\
    &=\sum_{i_1,\ldots,i_{k-2}}\kappa_1(f_1\otimes f_2\otimes e_{i_1})\otimes\cdots\otimes\kappa_1( e^\#_{i_{k-2}}\otimes f_k\otimes v)
\end{align*}
and similarly using relation (\ref{eq:contraction-rel1}) and in the last step relation (\ref{eq:contraction-rel3}), we get
\begin{align*}
    \kappa_0(f_1\otimes\cdots\otimes f_k)&=\sum_{i_1,\ldots,i_{k-2}}\kappa_1(f_1\otimes f_2\otimes e_{i_1})\otimes\cdots\otimes\kappa_1( e^\#_{i_{k-3}}\otimes f_{k-1}\otimes e_{i_{k-2}})\otimes \kappa_0( e^\#_{i_{k-2}}\otimes f_{k})\\
    &=\sum_{i_1,\ldots,i_{k-1}}\kappa_1(f_1\otimes f_2\otimes e_{i_1})\otimes\cdots\otimes \kappa_1( e^\#_{i_{k-2}}\otimes f_{k}\otimes e_{i_{k-1}})\otimes \kappa_0( e^\#_{i_{k-1}})\\
    &=\sum_{i_1,\ldots,i_{k}}\kappa_1(f_1\otimes f_2\otimes e_{i_1})\otimes\cdots\otimes \kappa_1( e^\#_{i_{k-2}}\otimes f_{k}\otimes e_{i_{k-1}})\otimes \kappa_1( e^\#_{i_{k-1}}\otimes e^\#_{i_k}\otimes e_{i_k}).
\end{align*}
We thus see that the subring of $R$ generated by all classes of the form $\kappa_1(f_1\otimes\cdots\otimes f_k\otimes v)$ and $\kappa_0(f_1\otimes\cdots\otimes f_k)$ is in fact generated by the classes of the form $\kappa_1(f_1\otimes f_2\otimes v)$. We see that applying the relation (\ref{eq:contraction-rel2}), then (\ref{eq:symmetryrelations}) and finally (\ref{eq:contraction-rel2}) again, gives us a relation
\begin{align*}
    \sum_{i=1}^n\kappa_1(f_1\otimes f_2\otimes e_i)\otimes\kappa_1( e^\#_i\otimes f_3\otimes v)&=\kappa_1(f_1\otimes f_2\otimes f_3\otimes v)\\
    &=(-1)^2\kappa_1(f_3\otimes f_1\otimes f_2\otimes v) \\
    &=\sum_{i=1}^n\kappa_1(f_3\otimes f_1\otimes e_i)\otimes\kappa_1( e^\#_i\otimes f_2\otimes v).
\end{align*}

We use this to define the following ring:

\begin{definition}\label{def:ringstructure}
    Let $R_{\pres}$ denote the graded commutative ring generated by the classes $\kappa_1(f_1\otimes f_2\otimes v)$, with the relations given by\begin{enumerate}[(1)]
        \item linearity in $f_1$, $f_2$ and $v$,
        \item\label{eq:rel2-Rpres} $\kappa_1(f_1\otimes f_2\otimes v)=-\kappa_1(f_2\otimes f_1\otimes v)$,
        \item\label{eq:rel3-Rpres} $\sum_{i=1}^n\kappa_1(f_1\otimes f_2\otimes e_i)\otimes\kappa_1( e^\#_i\otimes f_3\otimes v)=\sum_{i=1}^n\kappa_1(f_3\otimes f_1\otimes e_i)\otimes\kappa_1( e^\#_i\otimes f_2\otimes v)$.
    \end{enumerate}
\end{definition}

\begin{remark}
    Since $\kappa_1(f_1\otimes f_2\otimes v)$ has degree 1, we have in other words that $R_{\pres}$ is isomorphic to the quotient of the graded ring $\exterior{*}(\exterior{*}H^\vee(n)\otimes H(n))$ by the relation 
    $$\sum_{i=1}^n (f_1\wedge f_2\otimes e_i)\wedge( e^\#_i\wedge f_3\otimes v)=\sum_{i=1}^n(f_3\wedge f_1\otimes e_i)\wedge ( e^\#_i\wedge f_2\otimes v).$$
\end{remark}

Since the relations in $R_{\pres}$ are satisfied by the classes $\kappa_1(f_1\otimes f_2\otimes v)\in R$, we have a ring homomorphism $R_{\pres}\to R$. Theorem \ref{theoremA} is then a consequence of the following proposition:

\begin{proposition}\label{prop:ringstructure}
   The ring homomorphism $R_{\pres}\to R$ is surjective. In a stable range of degrees, it is an isomorphism.
\end{proposition}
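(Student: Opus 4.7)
The surjectivity is essentially established in the text preceding the proposition. By the Day convolution structure of the coend, $R$ is generated as a graded commutative ring by the images of single-part labeled partitions, namely the classes $\kappa_1(f_1\otimes\cdots\otimes f_k\otimes v)$ for $k\geq 2$ and $\kappa_0(f_1\otimes\cdots\otimes f_k)$ for $k\geq 1$. The contraction relations (\ref{eq:contraction-rel1}), (\ref{eq:contraction-rel2}) and (\ref{eq:contraction-rel3})---each an instance of the coend identifications---allow us to iteratively express every such class as a product of $\kappa_1(f_1\otimes f_2\otimes v)$'s, as already carried out in the text. These products satisfy the defining relations of $R_{\pres}$, so the ring homomorphism $R_{\pres}\to R$ is well-defined and surjective.

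For the isomorphism in a stable range, the plan is to compare both sides as graded $\GGL_n$-representations. Both carry compatible algebraic $\GGL_n$-structures: $R_{\pres}$ is a quotient of the algebraic representation $\exterior{*}(\exterior{2}H^\vee(n)\otimes H(n))$ and is therefore a finite-dimensional algebraic representation in each degree, while $R$ inherits an algebraic structure via the kernel $K^\vee$ appearing in the coend. Both thus decompose into irreducibles $V_{\lambda,\mu}$, and the surjection is $\GGL_n$-equivariant. By Proposition \ref{prop:wBr1} applied with $A = \PP'\otimes\det$, the multiplicity of $V_{\lambda,\mu}$ in $R$ equals, for $n\gg|\lambda|+|\mu|$, the multiplicity of $S^\lambda\otimes S^\mu$ in $(\PP'\otimes\det)(|\lambda|,|\mu|)$.

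To finish, we compute the corresponding multiplicity in $R_{\pres}$ directly. The key observation is that the relation (\ref{eq:rel3-Rpres}) encodes the trace contraction via the canonical form $\omega:\bQ\to H(n)\otimes H^\vee(n)$: its left-hand side is obtained from $f_1\wedge f_2\wedge f_3\otimes v$ by inserting $\omega(1)$ and antisymmetrising appropriately. Using Schur--Weyl duality (Theorem \ref{thm:schurweyl}) together with Proposition \ref{prop:decomp2}, we identify the $V_{\lambda,\mu}$-isotypic component of $R_{\pres}^k$ in the stable range with the $S^\lambda\otimes S^\mu$-isotypic component of $\PP'(|\lambda|,|\mu|)^k\otimes\det(|\lambda|,|\mu|)$. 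The matching of multiplicities, combined with the established surjectivity, then forces $R_{\pres}^k\to R^k$ to be an isomorphism in the stable range.

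The principal obstacle is making this Schur--Weyl identification rigorous: it requires recognising the ideal $\mathsf{IH}$ as precisely the ``trace ideal'' cutting out the harmonic part inside $\exterior{*}(\exterior{2}H^\vee(n)\otimes H(n))$, and carefully tracking the sign twists coming from $\det$. This is a bivariant analogue of the analysis in \cite[Section 5]{KupersRandal-Williams} for orthogonal and symplectic walled Brauer categories, adapted here to the general linear setting.
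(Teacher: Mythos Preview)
Your surjectivity argument is fine and matches what the paper establishes in the discussion preceding the proposition.

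The gap is in the isomorphism half. You correctly identify the strategy---match the $V_{\lambda,\mu}$-multiplicities in $R_{\pres}$ with those in $R$---but you never actually compute the multiplicities in $R_{\pres}$. You assert that the $V_{\lambda,\mu}$-isotypic component of $R_{\pres}^k$ corresponds to the $S^\lambda\otimes S^\mu$-isotypic component of $(\PP'\otimes\det)(|\lambda|,|\mu|)$, but this is precisely the content of the proposition, and you yourself flag making it rigorous as ``the principal obstacle''. Your heuristic that relation~(\ref{eq:rel3-Rpres}) ``encodes the trace contraction'' is also off: the relation is an IH-type (associativity) move between two $\kappa_1$'s, not a trace-vanishing condition, so the analogy with a harmonic/traceless decomposition does not go through directly and is not what is used.

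The paper avoids computing multiplicities in $R_{\pres}$ altogether. For each $S$ it introduces a space $\cG(S)$ of $(2,1)$-valent directed oriented graphs modulo sign relations and the IH relation, and builds a surjection $\alpha:\cG(S)\twoheadrightarrow [R_{\pres}\otimes K_S(n)]^G$ by sending a graph $\Gamma$ to the image of the invariant tensor $\omega_m(1)$ under the product of $\kappa_1$'s indexed by vertices. The combinatorial reduction of Proposition~\ref{prop:graphreduction} puts every such graph into a standard form (a disjoint union of caterpillar trees, caterpillars with a single loop, and bare edges), which gives an \emph{isomorphism} $\cG(S)\cong \PP(S)\otimes\det(S)$. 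On the other side, Proposition~\ref{prop:wBr2} (not Proposition~\ref{prop:wBr1}) supplies a surjection $\psi^{\wBr_n}_S:\PP(S)\otimes\det(S)\to [R\otimes K_S(n)]^G$ which is an isomorphism in a stable range. The resulting commutative square forces $\phi_*:[R_{\pres}\otimes K_S(n)]^G\to [R\otimes K_S(n)]^G$ to be surjective, and an isomorphism stably; since both sides are algebraic and finite-dimensional in each degree, this suffices. The graph reduction lemma is thus the missing ingredient in your approach---it is exactly what turns the IH relation into a computable normal form.
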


To prove that proposition, we once again follow the example of \cite{KupersRandal-Williams} and note that since $R$ and $R_{\pres}$ both are graded representations of $\GL_n(\bZ)$ which are finite dimensional in each degree, it suffices to show that for any pair of finite sets $S=(S_1,S_2)$, the induced map
$$[R_{\pres}\otimes K_S(n)]^{\GL_n(\bZ)}\to[R\otimes K_S(n)]^{\GL_n(\bZ)}$$
is surjective and an isomorphism in a stable range of degrees. To show this, we introduce a graded vector space of labeled graphs tailored to the given ring presentation. Let us therefore start by defining the notion of graphs that we will consider:

\begin{definition}[cf.\cite{KupersRandal-Williams}]
    Let $S=(S_1,S_2)$ be a pair of finite sets.\begin{enumerate}[(1)]
        \item A marked directed oriented graphs with \textit{incoming legs} labeled by $S_1$ and \textit{outgoing legs} labeled by $S_2$, consists of\begin{enumerate}[(i)]
        \item a totally ordered finite set $V$ (consisting of vertices), two totally ordered finite sets $H_1$ and $H_2$ (consisting respectively of incoming and outgoing half-edges) and monotone functions $a_1:H_1\to V$ and $a_2:H_2\to V$, encoding that a half edge is incident to a vertex,
        \item a bipartite matching $m$ of $(S_1\sqcup H_2, S_2\sqcup H_1)$, which encodes the edges and legs of the graph (and how the legs are labeled).
    \end{enumerate}
    We illustrate some graphs like this in Figure \ref{fig:2-1-valent-graphs}.
    \item For $v\in V$, we call the pair $(|a_1^{-1}(v)|,|a_2^{-1}(v)|)$, i.e.\ the pair of the number of incoming incident edges and the number of outgoing incident edges, the \textit{valency} of $v$. 
    \item  An isomorphism from a marked directed oriented graph $(V,H_1,H_2,a_1,a_2,m)$ to a marked directed oriented graph $(V',H_1',H_2',a_1',a_2',m')$ is a triple 
    $$(f:V\to V',g_1:H_1\to H_1,g_2:H_2\to H_2)$$
    of order preserving bijections such that $f\circ a_1=a_1'\circ g_1$, $f\circ a_2=a_2'\circ g_2$ and such that $(\id_{S_1}\sqcup g_2,\id_{S_2}\sqcup g_1)$ sends $m$ to $m'$. 
    \item An oriented directed graph is an isomorphism class of marked oriented graphs. We write $[\Gamma]$ for the isomorphism class of a marked directed oriented graph $\Gamma$.
    \end{enumerate} 
\end{definition}

\begin{figure}[h]
    \centering
    \includegraphics[scale=0.3]{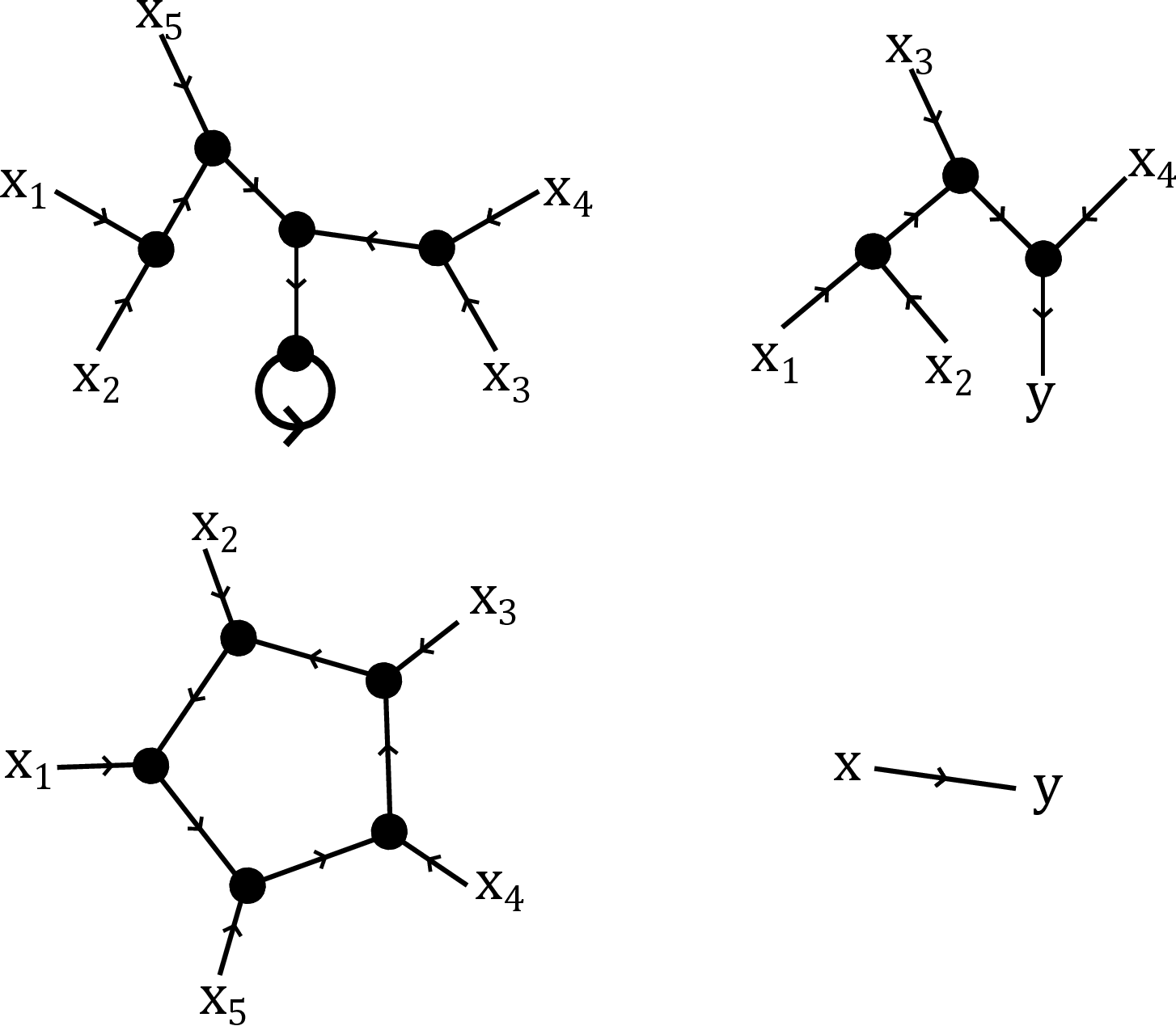}
    \caption{Some examples of $(2,1)$-valent direct graphs.}
    \label{fig:2-1-valent-graphs}
\end{figure}

\begin{remark}
    We remark that in this definition, we allow the matching $m$ to contain pairs of the form $(s_1,s_2)$, with $s_1\in S_1$ and $s_2\in S_2$. We illustrate this graphically by an oriented edge from the label $s_1$ to the label $s_2$ (as seen in Figure \ref{fig:2-1-valent-graphs}), although this oriented edge does not come from the half edges in $H_1$ and $H_2$.
\end{remark}

For $S=(S_1,S_2)$ a pair of finite sets, we let $\cC'(S)$ be the graded vector space, concentrated in degree $|S_1|-|S_2|$, with basis given by oriented graphs where each vertex has valency $(2,1)$, i.e.\ exactly one incident outgoing edge and two incident incoming edges. 

\begin{convention}
    In pictures, we will use the convention that incoming half-edges at a vertex are ordered clockwise, starting from the outgoing half-edge. Furthermore, we use the convention that vertices are ordered left to right and up to down.
\end{convention}

\begin{remark}
    It is useful to think of a vertex of valency $(2,1)$ of such a graph as corresponding to the cohomology class $h_{2,1}$ and a oriented edge between two labels in $(S_1,S_2)$ as corresponding to the class $h_{1,1}$.
\end{remark}

\begin{remark}
    Comparing our grading to the grading given on graphs in \cite{KupersRandal-Williams}, we could have used a similar convention to there by giving each incoming half-edge degree 1 and each outgoing half-edge degree $-1$, but since this means that any internal edge contributed to the degree by 0, this is the same as our grading.
\end{remark}

We also want to identify graphs that are related by reordering half-edges and vertices, but up to a sign. This has several reasons, as we will see, but the first is to be compatible with the relation (\ref{eq:rel2-Rpres}). Suppose $\Gamma=(V,H_1,H_2,a_1,a_2,m)$ and $\Gamma'=(V',H_1',H_2',a_1',a_2',m')$ are marked oriented graphs, representing basis elements of $\cC'(S)$, and $f:V\to V'$, $g_1:H_1\to H_1'$ and $g_2:H_2'\to H_2'$ are not necessarily order preserving bijections such that $f\circ a_1=a_1'\circ g_1$, $f\circ a_2=a_2'\circ g_2$ and $(\id_{S_1}\sqcup g_2,\id_{S_2}\sqcup g_1)$ sends $m$ to $m'$. 

First of all, since the set of vertices $V$ is totally ordered, the bijection $f$ has a well-defined sign, which denote by $\sgn(f)$.

Furthermore, for each $v\in V$, the bijection $g_1$ restricts to a bijection from the set $a_1^{-1}(v)$ and the set $(a_1')^{-1}(f(v))$, which are sets with two elements. Once again, this is a bijection between totally ordered sets, so there is a well-defined sign of the restriction, which we denote by $\sgn(g_1;v)$. We define the sign of $g_1$ to be
$$\sgn(g_1):=\prod_{v\in V}\sgn(g_1,v).$$
With this we make the following definition:

\begin{definition}
    For $S=(S_1,S_2)$ a pair of finite sets, we let $\cC(S)$ be the quotient of $\cC'(S)$ by the subspace generated by differences
    $$[\Gamma]-\sgn(f)\sgn(g_1)[\Gamma'],$$
    where $\Gamma$ and $\Gamma'$ are graphs related by bijections as in the previous two paragraphs.
\end{definition}

Finally, we want to impose a relation on graph corresponding to the relation (\ref{eq:rel3-Rpres}). Graphically, this corresponds to a kind of ``directed IH-relation'' (cf. relation ($\epsilon$) in \cite[Section 5]{KupersRandal-Williams}), which is illustrated in Figure \ref{fig:IHrel}. We make the following definition:

\begin{figure}[h]
    \centering
    \includegraphics[scale=0.3]{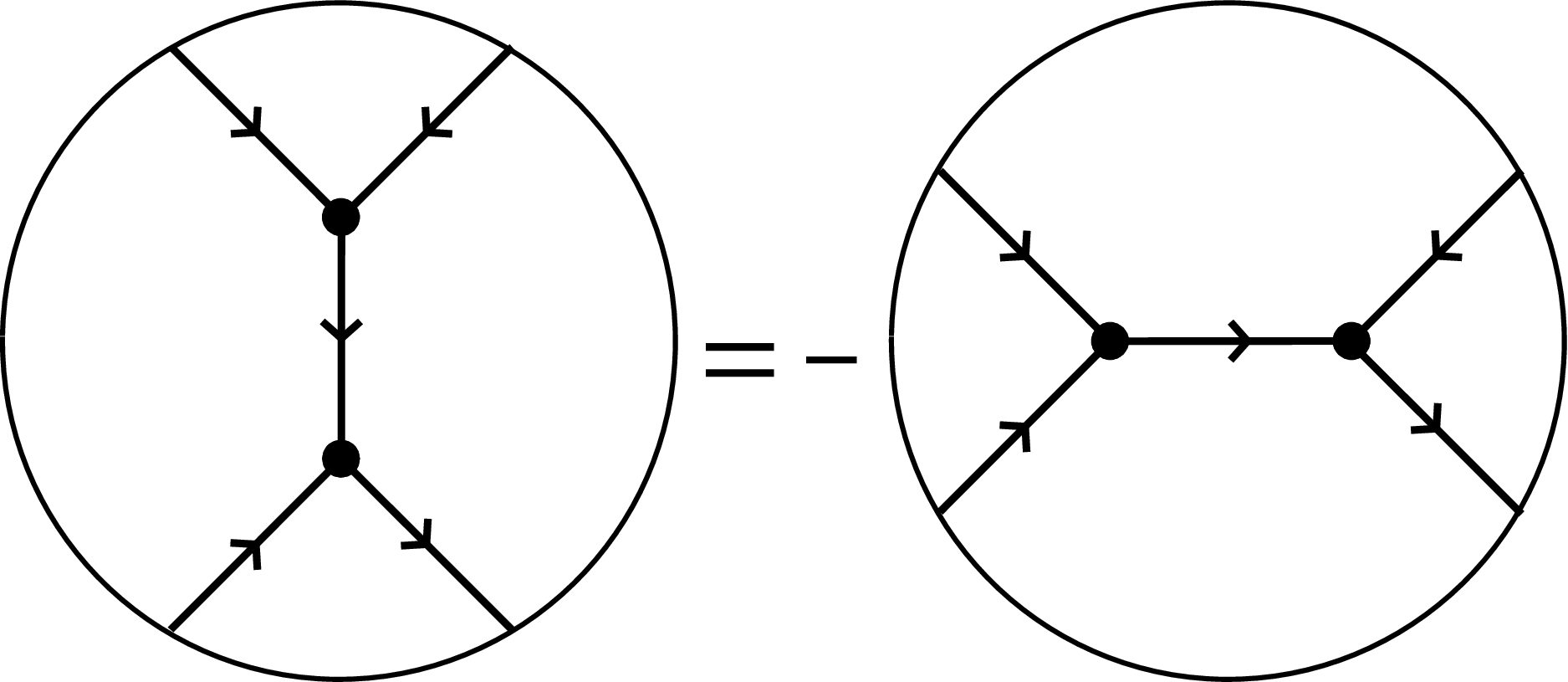}
    \caption{The directed IH-relation among $(2,1)$-valent directed graphs}
    \label{fig:IHrel}
\end{figure}

\begin{definition}
    We define $\cG(S)$ as the quotient of $\cC(S)$ by the subspace spanned by differences $[\Gamma]-[\Gamma']$,    where $\Gamma$ and $\Gamma'$ are related by the local move in Figure \ref{fig:IHrel}.
\end{definition}

Before we move on to prove the proposition, let us prove some elementary facts about the space $\cG(S)$:

\begin{proposition}\label{prop:graphreduction}
    Any graph in $\cG(S)$ is equivalent to one whose connected components are the kinds of graphs in Figure \ref{fig:stdgraphs}. 
\end{proposition}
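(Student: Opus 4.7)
The strategy is to reduce a graph to standard form by working one connected component at a time, since the sign conventions and the IH-relation are local. For each connected component $\Gamma$, I would argue by induction on a carefully chosen complexity measure, such as the lexicographic pair $(v(\Gamma), c(\Gamma))$, where $v(\Gamma)$ is the number of internal vertices and $c(\Gamma)$ is some secondary measure (for instance, the total ``distance'' of trivalent vertices from a chosen root or from the boundary of the cyclic core, in case $\Gamma$ has cycles). The key point is that a single application of the IH-move in Figure \ref{fig:IHrel} does not change the number of vertices but can be arranged to strictly decrease the secondary measure, so the induction terminates in a standard shape.

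The plan is to analyse two cases separately. First, suppose $\Gamma$ is a tree (as an undirected graph). Since every vertex is $(2,1)$-valent, the directed structure forces a unique ``root'' edge (the outgoing edge that does not get matched internally) emerging at some distinguished leg in $S_2$. I would then apply the IH-move repeatedly, each time along an internal edge whose endpoints are both trivalent vertices, to comb the tree into a ``caterpillar'': a linear spine of trivalent vertices connected by their outgoing-to-incoming edges, with the remaining incoming slots occupied by legs from $S_1$. A careful but routine check shows that each IH-swap either reduces the depth of a deepest branch or leaves it unchanged while strictly decreasing the number of branches of maximal depth, which is enough to terminate the induction.

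Second, suppose $\Gamma$ contains a cycle. Because each vertex has exactly one outgoing half-edge, the cyclic core is necessarily a single directed cycle (a ``wheel''), and any other structure is attached as trees hanging from the cycle through their root edges. I would first reduce each hanging tree to its caterpillar normal form as in the tree case, and then use IH-moves along edges of the cycle to absorb any tree attachments into a canonical location (say, all concentrated at the lexicographically smallest vertex of the cycle). Up to the sign-permutation relations built into $\cC(S)$, this identifies $\Gamma$ with one of finitely many ``decorated wheels'' displayed in Figure \ref{fig:stdgraphs}.

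The main obstacle I anticipate is showing that the IH-relation is powerful enough to always achieve the reduction rather than just permuting graphs within a fixed equivalence class; concretely, that the complexity measure genuinely drops under a suitable choice of IH-move and that the sign and ordering conventions encoded in $\cC(S)$ do not obstruct combining successive moves. This should follow from a direct case analysis on the local configuration around an internal edge, combined with the observation that reordering vertices or half-edges only introduces the sign $\sgn(f)\sgn(g_1)$, which is already modded out in $\cC(S)$ and hence in $\cG(S)$.
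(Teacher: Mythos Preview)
Your tree case is essentially the paper's argument: use the IH-relation to comb any $(2,1)$-valent tree into a caterpillar. The complexity measure you propose is more elaborate than needed, but the idea is correct and matches the paper.

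The cycle case, however, has a genuine gap. The paper's key move is to use the IH-relation to \emph{shorten} any directed cycle: if a cycle passes through at least two vertices, an IH-move across a cycle edge pulls one of those vertices out of the cycle and into an attached tree, strictly decreasing the cycle length. Iterating, every cycle is reduced to a loop at a single vertex; the remaining structure is then a tree hanging off that loop, which is combed into form~(b). Your plan never mentions shrinking the cycle. You propose instead to ``absorb any tree attachments into a canonical location (say, all concentrated at the lexicographically smallest vertex of the cycle)'', but this is impossible while preserving $(2,1)$-valence: each vertex on a cycle of length $k\ge 2$ already has one incoming and one outgoing half-edge used by the cycle, leaving exactly one free incoming slot per vertex. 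You cannot put more than one attachment at any cycle vertex, so concentrating them is not a well-defined operation. Your description of the target as ``finitely many decorated wheels'' confirms the misconception: the standard form~(b) in Figure~\ref{fig:stdgraphs} is specifically a single \emph{loop} with a caterpillar tail, not a wheel of arbitrary length.

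A secondary point: your assertion that the cyclic core is a single directed cycle is correct, but out-degree $1$ alone only gives that directed cycles are vertex-disjoint; you also need a small counting argument (with $V$ vertices there are at most $V$ internal edges, so the first Betti number is at most $1$) to rule out two disjoint cycles in one connected component. The paper sidesteps this by first shrinking all cycles to loops and then observing that removing the loops leaves a tree with at most one univalent-output vertex, hence at most one loop.
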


\begin{figure}[h]
    \centering
    \includegraphics[scale=0.3]{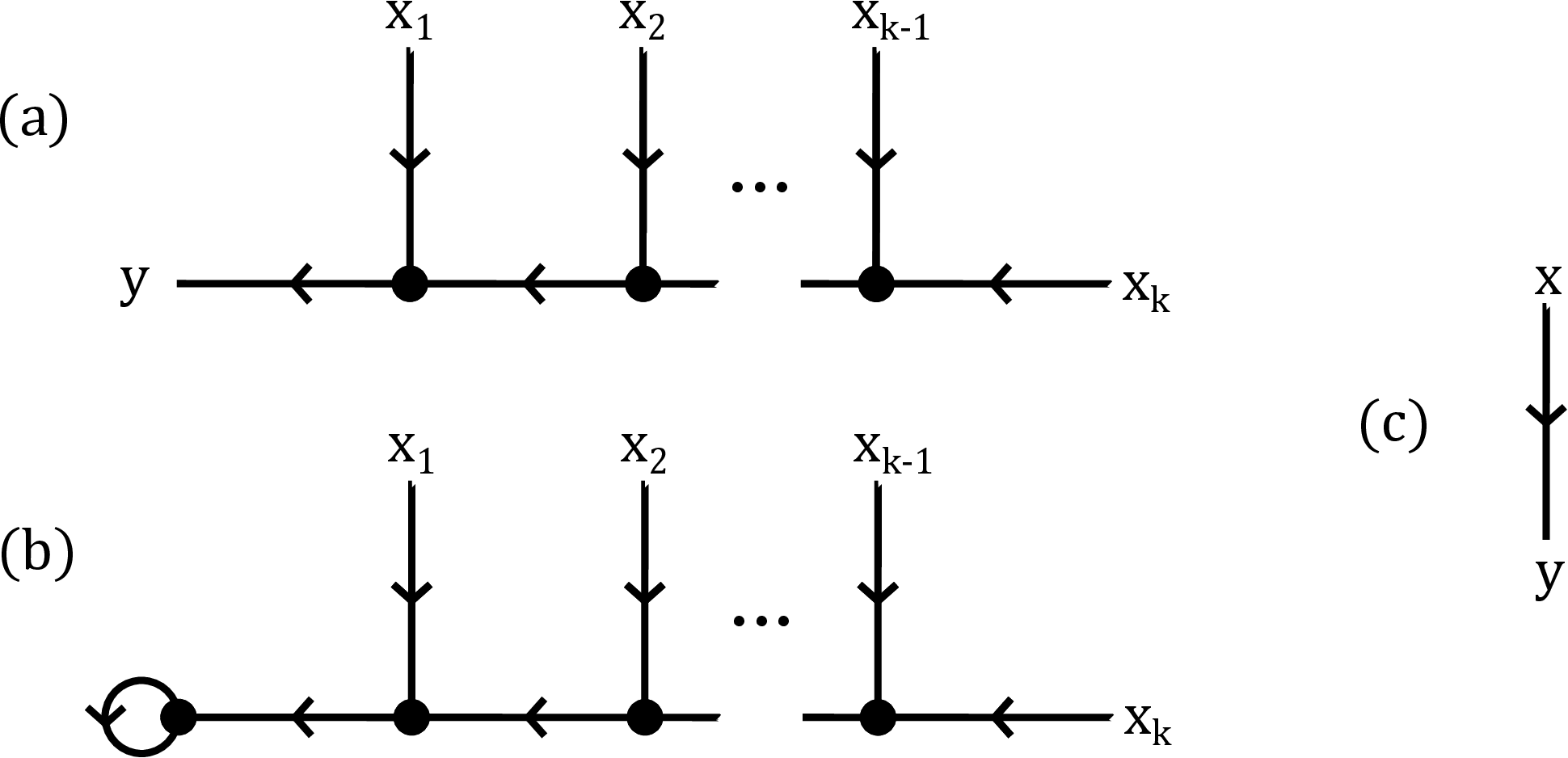}
    \caption{}
    \label{fig:stdgraphs}
\end{figure}

\begin{proof}
    First, we note that if a connected component of a graph contains no vertex, it must be of the form (c) in Figure \ref{fig:stdgraphs}.

    Next, suppose that a graph has a connected component which contains a vertex but which contains no directed cycle. Since no vertex can have two outgoing incident half-edges, the graph cannot have any non-directed cycles and must therefore be a directed, binary tree. By iteratively applying the move in Figure \ref{fig:IHrel}, any such tree can be made into the graph (a) in Figure \ref{fig:stdgraphs}.
    
    Finally, we  show that any graph containing a directed cycle is equivalent to a graph of the form (b) in Figure \ref{fig:stdgraphs}. If the graph has only one vertex, the cycle must be a loop and the vertex must have one additional incident incoming half-edge. Since the graph has no more vertices, it must be of the form (b), with one incoming labeled half-edge. Suppose next that the graph has more than one vertex, and contains a directed cycle. In this case, any directed cycle containing more than one vertex can be made shorter using the relation \ref{fig:IHrel}. Since the graphs are finite we can repeat this procedure a finite number of times until the cycle contains only one vertex and thus one edge, i.e.\ is a loop. Once again, this means that there is precisely one incoming half-edge to the vertex which is not in the loop, so the loop can not be part of another directed cycle. This means that if there are other directed cycles in the graph, we can repeat the previous step a finite number of times, until we have a directed graph which has no directed cycles apart from possible loops at some vertices. However, if we remove the loops from such a graph, we get a directed binary tree where each vertex either has two incoming and one outgoing half-edge, or only one incoming half-edge, and such a directed tree can only have one vertex with only one incoming half-edge, so the graph can only have had one loop. Finally, we may once again apply the local move from Figure \ref{fig:IHrel} a number of times to get a graph of the form (b) in Figure \ref{fig:stdgraphs}. This finishes the proof.
\end{proof}

With this, we are ready to prove the proposition.

\begin{proof}[Proof of Proposition \ref{prop:ringstructure}]
    We follow closely the proofs of \cite[Theorems 5.1 and 5.2]{KupersRandal-Williams}. Recall that we want to show that for any pair of finite sets $S=(S_1,S_2)$, the map
    $$\phi_*:[K_S(n)\otimes R_{\pres}]^{\GL_n(\bZ)}\to[K_S(n)\otimes R]^{\GL_n(\bZ)} ,$$
induced by the map $R_\pres\to R$, is surjective and in a stable range an isomorphism in each degree. For brevity, let us write $G:=\GL_n(\bZ)$ for the remainder of the proof.

    Let us write $R_\gen$ for the graded commutative ring generated by the classes $\kappa_1(f_1\otimes f_2\otimes v)$, modulo linearity in $f_1, f_2$ and $v$. In other words, we have
    $$R_\gen:=\exterior{*}K_{1,2}(n).$$
    Given a graph $[\Gamma]=(V,H_1,H_2,a_1,a_2,m)\in\cC'(S)$, we have a map
    \begin{align}\label{eq:graphmap}
        \alpha_\Gamma:\bigotimes_{v\in V}\kappa_1\otimes\id:\bigotimes_{v\in V} H^{\vee}(n)^{\otimes a_1^{-1}(v)}\otimes H(n)^{\otimes a_2^{-1}(v)}\otimes K_S(n)\to R_{\gen}\otimes K_S(n) 
    \end{align}
    There is an invariant element
    $$\omega_m(1)\in\bigotimes_{v\in V} H^{\vee}(n)^{\otimes a_1^{-1}(v)}\otimes H(n)^{\otimes a_2^{-1}(v)}\otimes K_S(n),$$
    and so we can define a map $\alpha':\cC'(S)\to[R_{\gen}\otimes K_S(n)]^G$ by evaluating the map (\ref{eq:graphmap}) on $\omega_m(1)$, i.e.\ $\alpha'([\Gamma])=\alpha_\Gamma(\omega_m(1))$. By Proposition \ref{prop:invariantsofK}, this map is surjective.
    
    The relations defining the quotient $\cG(S)$ correspond to the relations in $R_{\pres}$, as we have seen, so it follows that these are in the kernel of the composition of $\alpha'$ with the projection $[K_S(n)\otimes R_\gen]^G\to[K_S(n)\otimes R_\pres]^G$ and thus we get a map $\alpha:\cG(S)\to[K_S(n)\otimes R_{\pres}]^G$, which is also surjective.

    By Proposition \ref{prop:wBr2}, there is a map
    $$\psi^{\wBr_n}_S:\PP(S)\otimes\det(S)\to [K_S(n)\otimes R]^{G},$$
    which is an epimorphism and an isomorphism in a stable range of degrees. We want to relate this map to the composition $\phi_*\circ\alpha:\cG(S)\to[K_S(n)\otimes R]^G$

    By Proposition \ref{prop:graphreduction}, every graph in $\cG(S)$ is equivalent to a graph whose connected components are of the forms in Figure \ref{fig:stdgraphs}. Using this description, we can define a map $\cG(S)\to \PP(S)\otimes\det(S)$, by sending each component of the form (c) to a labeled part $(s,t)$, the components of the form (a) to the labeled part $(\{s_1,\ldots,s_k\},t')$ and the components of the form (b) to the unlabeled part $\{s_1,\ldots,s_k\}$ and tensoring everything with the volume form in $\det(S)$. 
    
    From this description, it is clear that this map is surjective, since any labeled partition can be realized by a graph with components as in Proposition \ref{prop:graphreduction}. Furthermore, reducing graphs to this form shows that non-equivalent graphs are mapped to different labeled partitions and thus the map is in fact also injective. 
    
    Comparing the definition of $\Psi_S^{\wBr_n}$ with that of $\alpha$, we see that the diagram
    \[\begin{tikzcd}
        \cG(S)\arrow[r,"\alpha"]\arrow[d]& \left[R_{\pres}\otimes K_S(n)\right]^G\arrow[d,"\phi_*"]\\
        \PP(S)\otimes\det(S)\arrow[r,"\Psi_S^{\wBr_n}"]&\left[R\otimes K_S(n)\right]^G
    \end{tikzcd}\]
    is commutative. Since $\cG(S)\to\PP(S)\otimes\det(S)$ is an isomorphism and $\Psi_S^{\wBr_n}$ is surjective and an isomorphism in a stable range of degrees, $\phi_*$ must be surjective and an isomorphism in a stable range of degrees as well.
\end{proof}

\appendix 
\section{The stable Albanese (co)homology of \texorpdfstring{$\IA_n$}{IA}}\label{Appendix}
\vspace{-5pt}
\begin{center}
    \sectionauthor{By Mai Katada}
\end{center}

In this appendix, we will study the $\GL_n(\bQ)$-representation structure of the Albanese homology of $\IA_n$, which is the dual of the Albanese cohomology of $\IA_n$.


We will recall our conjectural structure of the stable Albanese homology of $\IA_n$.
For $i\ge 1$, let $U_i=\Hom(H,\bigwedge^{i+1}H)$, where $H=H(n)=H_1(F_n,\bQ)$.
Let $U_*=\bigoplus_{i\ge 1}U_i$ be the graded $\GL_n(\bQ)$-representation.
Define $W_*=W_*(n)=\widetilde{S}^*(U_*)$ as the \emph{traceless part} of the graded-symmetric algebra $S^*(U_*)$ of $U_*$.
Here, the traceless part consists of elements that vanish under any contraction maps between distinct factors of $S^*(U_*)$.
See \cite[Sections 2.5 and 2.6]{Katada2022stable} for details.
We can reconstruct $W_*$ by using a non-unital wheeled PROP $\calC_{\calO^{\circlearrowright}}$ as in Lemma \ref{lemmaW*} below.

The goal of this appendix is to prove the following theorem.

\begin{theorem}[Cf.\;{\cite[Conjecture 6.2]{Katada2022stable}}]\label{TheoremAlbaneseIAn}
    We have an isomorphism of graded $\GL_n(\bQ)$-repre- sentations
    \begin{gather*}
        H_*^A(\IA_n,\bQ)\cong W_*(n)
    \end{gather*}
    for sufficiently large $n$ with respect to the homological degree $*$.
\end{theorem}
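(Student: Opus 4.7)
The plan is to prove the dual statement $H^*_A(\IA_n, \bQ) \cong W_*(n)^\vee$ as graded $\GL_n(\bQ)$-representations in a stable range, from which the theorem follows by taking duals.

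The key step is to establish a natural isomorphism
$$K^\vee \otimes^{\dwBr}(\PP' \otimes \det) \cong W_*(n)^\vee$$
of graded $\GL_n(\bQ)$-representations in a stable range. To construct this, I would use the wheeled PROP $\calC_{\calO^{\circlearrowright}}$ developed in \cite{Katada2022stable} to describe $W_*$. A basis element of $\PP'(S) \otimes \det(S)$ is a partition of $S_1$ into labeled parts (with labels from $S_2$, no labeled part of size one) together with a sign, so that each labeled part of size $k+1$ corresponds to a generator of $U_k^\vee = H \otimes \exterior{k+1}H^\vee$. The coend $K^\vee \otimes^{\dwBr}(\PP' \otimes \det)$ then assembles graded-symmetric tensor products of such generators modulo the relations forcing the image of the $\omega$-insertions to vanish. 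These are precisely the relations cutting out the traceless part of the symmetric algebra $S^*(U_*^\vee)$, so the coend identifies with $W_*(n)^\vee$, with the disjoint-union symmetric monoidality on $\dwBr$ matching the graded-symmetric algebra structure.

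Next, Theorem \ref{theoremB} provides unconditionally a ring homomorphism $K^\vee \otimes^{\dwBr}(\PP' \otimes \det) \to H^*(\IA_n, \bQ)^{\alg}$ which, by the explicit presentation in Section \ref{sec:ring-structure}, is generated in degree $1$ by the classes $\kappa_1(f_1 \otimes f_2 \otimes v) \in H^1(\IA_n, \bQ) = \exterior{2}H^\vee \otimes H$. Since $H^*_A(\IA_n, \bQ)$ is by definition the image of the cup product from $\exterior{*}H^1(\IA_n, \bQ)$, the image of this ring map lies in $H^*_A(\IA_n, \bQ)$. Composing with the isomorphism above yields a multiplicative $\GL_n(\bQ)$-equivariant map
$$\Psi \colon W_*(n)^\vee \to H^*_A(\IA_n, \bQ)$$
which in degree $1$ is the canonical isomorphism $W_1(n)^\vee \cong \exterior{2}H^\vee \otimes H \cong H^1(\IA_n, \bQ)$.

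Finally, I would conclude that $\Psi$ is an isomorphism. Surjectivity is immediate, since $H^*_A(\IA_n, \bQ)$ is generated as a ring by $H^1(\IA_n, \bQ)$ and $\Psi$ is multiplicative and surjective in degree $1$. Injectivity follows by a dimension count using Theorem 6.1 of \cite{Katada2022stable}, which gives the containment $W_*(n)^\vee \subseteq H^*_A(\IA_n, \bQ)$ in a stable range: surjectivity of $\Psi$ yields $\dim H^*_A(\IA_n, \bQ) \le \dim W_*(n)^\vee$ degreewise, while Katada's inclusion gives the opposite inequality. The main obstacle will be the first step, carefully identifying the coend on $\dwBr$ with $W_*(n)^\vee$ via $\calC_{\calO^{\circlearrowright}}$ and verifying that the coend's relations realize exactly the traceless condition.
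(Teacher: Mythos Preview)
Your proposal is correct and follows essentially the same three-step strategy as the paper's proof: identify $R=K^\vee\otimes^{\dwBr}(\PP'\otimes\det)\cong W_*(n)^\vee$ via the non-unital wheeled PROP $\calC_{\calO^{\circlearrowright}}$, produce a surjection $W_*(n)^\vee\twoheadrightarrow H^*_A(\IA_n,\bQ)$, and conclude by the inclusion of \cite[Theorem~6.1]{Katada2022stable}. The only notable differences are in emphasis. For the surjection, the paper routes through the presentation $R_{\pres}\cong \exterior{*}H^1(\IA_n,\bQ)/\langle R_2\rangle$ (using Pettet's degree~$2$ computation to identify $\mathsf{IH}^2=R_2$) and then the tautological surjection onto $H^*_A$; you instead restrict the ring map of Theorem~\ref{theoremB} and observe that its image is generated in degree~$1$. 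Both work. For the identification $R\cong W_*(n)^\vee$, your description of the coend relations as ``forcing the image of the $\omega$-insertions to vanish'' is not quite how the argument runs: the coend relations identify $\omega$-inserted tensors with contracted ones rather than killing them, and the traceless part $K^\circ$ emerges as a direct summand via Proposition~\ref{prop:decomp2}. The paper makes this precise by showing $R_i\cong\bigoplus_{p-q=i}K^\circ_{q,p}(n)\otimes_{\Sigma_p\times\Sigma_q}\calC_{\calO^{\circlearrowright}}(p,q)$ and then invoking \cite[Proposition~12.3]{Katada2022stable} (recorded here as Lemma~\ref{lemmaW*}); this is the careful version of the step you correctly flagged as the main obstacle.
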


Note that the statement of \cite[Conjecture 6.2]{Katada2022stable} is for $n\ge 3*$.
In a subsequent paper \cite{KatadaRange}, we will prove Theorem \ref{TheoremAlbaneseIAn} with this stable range and also prove several related conjectures proposed in \cite{Katada2022stable}.

The case of degree $1$ follows from Kawazumi \cite{KawazumiMagnusExpansions}, Cohen--Pakianathan and Farb (both unpublished), and the cases of degree $2$ and $3$ are proven by Pettet \cite{Pettet} and the author of this appendix \cite{Katada2022stable}, respectively.
Moreover, in general degrees, a partial result is given in \cite[Theorem 6.1]{Katada2022stable}.

\begin{theorem}[{\cite[Theorem 6.1]{Katada2022stable}}]\label{TheoremAlbaneseinclusion}
    We have a morphism of graded $\GL_n(\bQ)$-representations
    \begin{gather*}
        F_*: H_*(U_1,\bQ)\to S^*(U_*)
    \end{gather*}
    such that $F_*(H^A_*(\IA_n,\bQ))\supset W_*$ for $n\ge 3*$.
\end{theorem}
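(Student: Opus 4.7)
The plan is to construct $F_*$ as a natural graded algebra homomorphism and then produce explicit homology classes in $\IA_n$ whose images realize a generating set of $W_*$. Since $U_1$ sits in odd cohomological degree, the graded-symmetric algebra $S^*(U_*)$ contains $\bigwedge^*(U_1) = H_*(U_1,\bQ)$ naturally as the subalgebra generated by its degree-$1$ piece. I would take $F_*$ to be this inclusion (corrected, if necessary, by a projection from the wheeled-PROP structure on $U_*$ so that explicit Johnson-type classes land in the traceless part). Via Kawazumi's isomorphism $H_1(\IA_n,\bQ)\cong U_1$, the Albanese homology $H^A_*(\IA_n,\bQ)$ is by construction the image of $H_*(\IA_n,\bQ)\to \bigwedge^* H_1(\IA_n,\bQ)$, hence a subspace of $\bigwedge^*(U_1)$ on which $F_*$ restricts.

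Next I would exhibit generators. The representation $W_*=\widetilde{S}^*(U_*)$ is generated as a $\GL_n(\bQ)$-module by products of elements in the various $U_i$'s, modulo all contractions between distinct tensor factors. Each individual $U_i$-generator is dual, under Kawazumi--Vespa's wheeled PROP structure on $\mathcal{H}$, to a class built from iterated composites of $h_{2,1}$ lying in $H^i(\Aut(F_n), K_{i+1,1}(n))$. Using the Hochschild--Serre spectral sequence associated to (\ref{eq:IA-ses}), one can lift these twisted $\Aut(F_n)$-classes to genuine classes in $H_*(\IA_n,\bQ)$. For a product $u_{i_1}\cdots u_{i_k}\in S^*(U_*)$ appearing in $W_*$, the corresponding cycle in $\IA_n$ is built by a Pontryagin product of the basic lifted cycles; the key algebraic identity is that applying $F_*$ to such a product turns the exterior product in $\bigwedge^*(U_1)$ into the graded-symmetric product in $S^*(U_*)$ corresponding exactly to $u_{i_1}\cdots u_{i_k}$, modulo correction terms controlled by $h_{2,1}$'s relations (\ref{eq:kawazumi-rel1})--(\ref{eq:kawazumi-rel2}).

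The main obstacle will be showing that the image of the constructed classes lies inside the \emph{traceless} subspace $W_*\subset S^*(U_*)$ and that it spans all of $W_*$ in the stable range $n\ge 3*$. Tracelessness should reduce to showing that all contraction operations vanish on the generators, which is expected from the relations satisfied by $h_{2,1}$ together with the fact that contractions on the $\IA_n$-side correspond to cap products that die after passage to the Albanese quotient. The spanning statement requires a multiplicity count: one must check that every $\GL_n(\bQ)$-irreducible $V_{\lambda,\mu}$ occurring in $W_*$ is actually hit by the image. The bound $n\ge 3*$ enters precisely at this point, since it guarantees that the relevant irreducibles appear with their stable multiplicities in both $H^A_*(\IA_n,\bQ)$ and $S^*(U_*)$, making the representation-theoretic comparison between the span of the constructed classes and $W_*$ tractable. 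Organising the classes combinatorially by the underlying trees/graphs (in the spirit of the $(2,1)$-valent graphs of Section \ref{sec:ring-structure}) will be the most delicate piece of bookkeeping.
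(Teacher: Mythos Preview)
This theorem is not proven in the present paper at all: it is quoted verbatim from \cite[Theorem 6.1]{Katada2022stable} and used as a black box in the proof of Theorem~\ref{TheoremAlbaneseIAn}. So there is no ``paper's own proof'' here to compare against; any assessment has to be against the actual argument in \cite{Katada2022stable}.

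Your proposal has a genuine gap in the very first step. You propose to take $F_*$ to be the inclusion $\bigwedge^*(U_1)\hookrightarrow S^*(U_*)$ of the subalgebra generated in degree~$1$. But then the image of $F_*$ is contained in the subalgebra generated by $U_1$, whereas $W_*=\widetilde{S}^*(U_*)$ contains irreducibles coming from the higher pieces $U_i$ for $i\ge 2$ (for instance $U_2$ itself sits inside $W_2$), and these are \emph{not} in the subalgebra generated by $U_1$. So with this choice of $F_*$ the inclusion $F_*(H^A_*(\IA_n,\bQ))\supset W_*$ is impossible for degree reasons, regardless of what classes you manage to produce in $\IA_n$. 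The actual map in \cite{Katada2022stable} is built from iterated contraction/bracket operations $U_1\otimes U_i\to U_{i+1}$ (dual to the operadic composition underlying the classes $h_{p,1}$), which is exactly what allows products of $U_1$-elements to hit the higher $U_i$'s; your parenthetical ``corrected by a projection from the wheeled-PROP structure'' gestures in this direction but is not a definition.

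A second issue: you speak of building cycles in $\IA_n$ via ``Pontryagin product'', but $\IA_n$ has no $H$-space structure, so this does not make sense as stated. What is actually used in \cite{Katada2022stable} is the construction of \emph{abelian cycles}: one chooses $i$ pairwise commuting elements of $\IA_n$, obtaining a map $\bZ^i\to\IA_n$, and pushes forward the fundamental class of the $i$-torus. The delicate part of the argument is then an explicit computation of $F_*$ on these abelian cycles and a representation-theoretic check that the resulting elements span $W_*$; the Hochschild--Serre spectral sequence plays no role in that argument.
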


In order to prove Theorem \ref{TheoremAlbaneseIAn}, we will use \cite[Proposition 12.3]{Katada2022stable}. To state this proposition, we introduce the wheeled PROP and the non-unital wheeled PROP corresponding to the operad $\Com$. 

Let $\calP_0=\bigoplus_{k\ge 1}\calP_0(k)$ denote the operadic suspension of the operad $\Com$ of non-unital commutative algebras.
Thus, we have $\calP_0(0)=0$ and $\calP_0(k)=\det(\bQ^{k})[k-1]$ in cohomological dimension $k-1$ for $k\ge 1$.
Let $\calP_0^{\circlearrowright}$ denote the wheeled completion of $\calP_0$ and $\calC_{\calP_0^{\circlearrowright}}$ the wheeled PROP freely generated by $\calP_0^{\circlearrowright}$, which was introduced by \cite{KawazumiVespa}.
Let $\calO=\bigoplus_{k\ge 2}\calP_0(k)$ denote the non-unital suboperad of $\calP_0$, $\calO^{\circlearrowright}$ the non-unital wheeled sub-operad of $\calP_0^{\circlearrowright}$ and $\calC_{\calO^{\circlearrowright}}$ the non-unital wheeled sub-PROP of $\calC_{\calP_0^{\circlearrowright}}$.
By these constructions, the wheeled PROP $\calC_{\calP_0^{\circlearrowright}}$ corresponds to the $\wBr_n$-module $\calP\otimes \det$ and the non-unital wheeled PROP $\calC_{\calO^{\circlearrowright}}$ corresponds to the $\dwBr$-module $\calP'\otimes \det$.

\begin{lemma}[{\cite[Proposition 12.3]{Katada2022stable}}]\label{lemmaW*}
    We have an isomorphism of $\GL_n(\bQ)$-representations
    \begin{gather*}
        W_i(n)\cong \bigoplus_{p-q=i}K^{\circ}_{p,q}(n)\otimes_{\bQ[\Sigma_p\times \Sigma_q]} \calC_{\calO^{\circlearrowright}}(p,q)
    \end{gather*}
    for sufficiently large $n$ with respect to $i$.
\end{lemma}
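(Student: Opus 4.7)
First, I would begin from the identification stated in the text immediately preceding the lemma, namely that $\calC_{\calO^{\circlearrowright}}(p,q) \cong (\PP' \otimes \det)(p,q)$ as $\bQ[\Sigma_p \times \Sigma_q]$-modules. Under this identification, labeled parts of size $k \ge 2$ of a labeled partition correspond to arity-$k$ vertices generated by $\calO(k) = \det(\bQ^{k})[k-1]$, and unlabeled parts correspond to wheels produced by the wheeled completion of $\calO$. Since $(\PP' \otimes \det)(p,q)$ is concentrated in degree $p-q$, summing over $p-q=i$ captures the entire graded piece in degree $i$, and the right-hand side of the lemma becomes (in the stable range) precisely the kind of coend appearing in Theorem \ref{theoremB}.

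The plan is then to construct an explicit $\GL_n(\bQ)$-equivariant map
\begin{equation*}
\Theta \colon \bigoplus_{p-q=i} K^{\circ}_{p,q}(n) \otimes_{\bQ[\Sigma_p \times \Sigma_q]} \calC_{\calO^{\circlearrowright}}(p,q) \longrightarrow W_i(n) = \widetilde{S}^i(U_*)
\end{equation*}
by \emph{evaluating labeled partitions on traceless tensors}. Concretely, a basis element of $(\PP' \otimes \det)(p,q)$ is (up to sign) a partition of $[p]$ in which $q$ of the parts are labeled by elements of $[q]$, with every labeled part of size at least $2$. A labeled part $\{i_1,\ldots,i_k\}$ carrying label $j$, paired with a traceless tensor $T \in K^{\circ}_{p,q}(n)$, contributes an alternating element of $\Hom(H, \bigwedge^{k} H) = U_{k-1}$, while each unlabeled part of size $\ell$ contributes a wheel built from the pairing $H \otimes H^\vee \to \bQ$. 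Taking the graded-symmetric product of these contributions yields an element of $S^*(U_*)$, and the tracelessness hypothesis on $T$ forces the result to lie in $\widetilde{S}^*(U_*) = W_*(n)$, since any contraction between two distinct factors in $S^*(U_*)$ would pull back to a nonzero trace on $T$.

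Finally, I would verify that $\Theta$ is an isomorphism for $n$ sufficiently large with respect to $i$ by matching isotypic components. Schur--Weyl duality (Theorem \ref{thm:schurweyl}) decomposes the source as
\begin{equation*}
\bigoplus_{\substack{|(\lambda,\mu)|=(p,q)\\ p-q=i}} V_{\lambda,\mu} \otimes \Hom_{\Sigma_p \times \Sigma_q}\!\bigl(S^\lambda \otimes S^\mu,\, \calC_{\calO^{\circlearrowright}}(p,q)\bigr),
\end{equation*}
and $W_i(n)$ carries a parallel isotypic decomposition. For $n$ large enough that $l(\lambda,\mu) \le n$ whenever $|(\lambda,\mu)| \le i$, the first fundamental theorem of invariant theory for $\GL_n$ (cf.\ Proposition \ref{prop:invariantsofK}) guarantees that every $\GL_n$-invariant map into a Schur module arises from a wheeled-PROP decoration, which implies surjectivity of $\Theta$; dimension counting on each isotypic piece then gives bijectivity. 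The main obstacle, which is the combinatorial heart of \cite[Proposition 12.3]{Katada2022stable}, is showing that the wheeled contractions in $\calC_{\calO^{\circlearrowright}}$ exhaust the trace relations cutting out $\widetilde{S}^*(U_*) \subset S^*(U_*)$; this is essentially a wheeled analogue of the second fundamental theorem of invariant theory for $\GL_n$.
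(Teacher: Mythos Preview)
The paper does not prove this lemma: it is quoted verbatim as \cite[Proposition 12.3]{Katada2022stable}, and the proof environment immediately following it is the proof of Theorem \ref{TheoremAlbaneseIAn}, not of the lemma. So there is no argument in the present paper to compare your proposal against.

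That said, your sketch has a genuine gap in the construction of $\Theta$, namely in how unlabeled parts (wheels) are handled. You say each unlabeled part of size $\ell$ ``contributes a wheel built from the pairing $H \otimes H^\vee \to \bQ$,'' but this produces a scalar, not an element of $S^*(U_*)$. Already at $i=1$ this fails: the summand $(p,q)=(1,0)$ consists of the single unlabeled part $\{1\}$ and contributes the copy of $H$ sitting inside $U_1 = \bigwedge^2 H \otimes H^\vee$ via the insertion $v \mapsto \sum_i (v \wedge e_i) \otimes e_i^\#$, not a scalar. In general the wheel components of $\calC_{\calO^{\circlearrowright}}$ must land on specific \emph{non}-traceless pieces of the individual $U_k$-factors, which is consistent with $W_*$ because ``traceless'' in the definition of $\widetilde{S}^*(U_*)$ refers only to contractions between \emph{distinct} symmetric factors, not to contractions internal to a single $U_k$. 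Your isotypic-component strategy at the end is reasonable, but without a correct definition of $\Theta$ on wheels the surjectivity and dimension-count arguments do not get started.
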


\begin{proof}[Proof of Theorem \ref{TheoremAlbaneseIAn}]

By Proposition 6.3, we stably have an isomorphism 
\begin{gather*}
    R\cong R_{\text{pres}},
\end{gather*}
of $\GL_n(\bQ)$-representations. By Remark 6.2, we have
\begin{gather*}
    R_{\text{pres}}\cong H^*(U_1,\bQ)/{\langle R_2 \rangle},
\end{gather*}
where $\langle R_2 \rangle$ denotes the two-sided ideal of $H^*(U_1,\bQ)$ generated by $$R_2=\ker(H^2(U_1,\bQ)\cong {\bigwedge}^2 H^1(\IA_n,\bQ) \xrightarrow{\cup} H^2(\IA_n,\bQ)).$$
Since the Albanese cohomology of $\IA_n$ is defined to be the image of the cup product map 
$$\bigwedge^* H^1(\IA_n,\bQ) \xrightarrow{\cup} H^*(\IA_n,\bQ),$$
we have a surjective map of $\GL_n(\bQ)$-representations
\begin{gather*}
    H^*(U_1,\bQ)/{\langle R_2 \rangle}\twoheadrightarrow H^*_A(\IA_n,\bQ).
\end{gather*}
Therefore, we stably have a surjective map of $\GL_n(\bQ)$-representations
\begin{gather*}
    R\cong R_{\text{pres}}\cong  H^*(U_1,\bQ)/{\langle R_2 \rangle}\twoheadrightarrow  H^*_A(\IA_n,\bQ).
\end{gather*}
We stably have 
$$R_i\cong W_i(n)^{\vee}$$
by Lemma \ref{lemmaW*}
since we stably have an isomorphism of $\GL_n(\bQ)$-representations
\begin{gather*}
\begin{split}
    R_i=(i^*(K^{\vee})\otimes^{\dwBr}(\calP'\otimes \det))_i
    &\cong \bigoplus_{p-q=i} K^{\circ}_{q,p}(n)\otimes_{\bQ[\Sigma_p\times \Sigma_q]}(\calP'\otimes \det)(p,q)\\
    &\cong 
    \bigoplus_{p-q=i} K^{\circ}_{q,p}(n)\otimes_{\bQ[\Sigma_p\times \Sigma_q]}\calC_{\calO^{\circlearrowright}}(p,q),
\end{split}
\end{gather*}
where the first isomorphism is verified by Proposition 2.7.
Therefore, the statement follows from Theorem \ref{TheoremAlbaneseinclusion}.
\end{proof}

\printbibliography

\vspace{1em}

\noindent {Erik Lindell, \itshape IMJ-PRG, 75013 Paris, France.} \noindent {Email address: \tt erikjlindell@gmail.com, \\ lindell@imj-prg.fr}

\vspace{0.8em}

\noindent{Mai Katada, \itshape Faculty of Mathematics, Kyushu University, 744, Motooka, Nishi-ku, Fukuoka, Japan
819-0395.} \noindent {Email address: \tt katada@math.kyushu-u.ac.jp}

\end{document}